\newcolumntype{M}[1]{>{\centering\arraybackslash}m{#1}}
\newtheorem{defn}{Definition}
\newtheorem{thm}[defn]{Theorem}
\newtheorem{pro}[defn]{Proposition}
\newtheorem{cor}[defn]{Corollary}
\newtheorem{lem}[defn]{Lemma}
\theoremstyle{remark} 
\newtheorem{exa}[defn]{Example}
\newtheorem{rem}[defn]{Remark}
\newcommand{\CC}{\mathbb C}
\newcommand{\FF}{\mathbb F}
\newcommand{\HH}{\mathbb H}
\newcommand{\LL}{\mathbb L}
\newcommand{\NN}{\mathbb N}
\newcommand{\PP}{\mathbb P}
\newcommand{\GG}{\mathbb G}
\newcommand{\QQ}{\mathbb Q}
\newcommand{\VV}{\mathbb V}
\newcommand{\ZZ}{\mathbb Z}
\newcommand{\Ocal}{\mathcal O}
\newcommand{\Hcal}{\mathcal H}
\newcommand{\Hbar}{\mathbb H}
\newcommand{\diag}{\mathrm {diag}}
\newcommand{\id}{\operatorname{id}}
\newcommand{\Aut}{\operatorname{Aut}}
\newcommand{\GL}{\operatorname{GL}}
\newcommand{\Jac}{\operatorname{Jac}}
\newcommand{\SL}{\operatorname{SL}}
\newcommand{\im}{\operatorname{im}}
\newcommand{\age}{\operatorname{age}}
\newcommand{\Hom}{\operatorname{Hom}}
\newcommand{\Spec}{\operatorname{Spec}}
\newcommand{\al}{\alpha}
\newcommand{\mf}{\mathfrak}
\newcommand{\fie}{\varphi}
\newcommand{\la}{\lambda}
\newcommand{\cxi}{\mathtt{i}}
\def\pmmu{{\pmb \mu}}
\def\CR{\operatorname{CR}}
\def\ol{\overline}
\def\ul{\underline}
\def\wt{\widetilde}
\newcommand{\smallvee}{\mathrel{\text{\raisebox{0.25ex}{\scalebox{0.6}{$\vee$}}}}}
\let\@fnsymbol\@arabic
\title{Mirror symmetry and automorphisms}
\author{Alessandro Chiodo\thanks{Supported by the ANR project ``Categorification in Algebraic Geometry'', CANR-17-CE40-0014, the ANR project 
``Enumerative Geometry'', PRC ENUMGEOM.}, Elana Kalashnikov\thanks{Supported by the Engineering and Physical Sciences Research Council [EP/L015234/1], the EPSRC Centre for Doctoral Training in Geometry and Number Theory (The London School of Geometry and Number Theory), University College London.}}
\begin{document}

\maketitle

\vspace{-20pt}

\begin{abstract} 
\noindent 
We show that there is an extra dimension to the mirror duality
discovered in the early nineties by Greene--Plesser and Berglund--H\"ubsch. Their duality matches cohomology classes  of  
two  Calabi--Yau orbifolds. When both orbifolds are equipped with an automorphism $s$ of the same order, our mirror duality involves the weight of the action of $s^*$ 
on cohomology. In particular it matches  the respective $s$-fixed loci,
which are not Calabi--Yau in general.
When applied to K3 surfaces with non-symplectic automorphism $s$ of odd prime order, this 
provides a proof that Berglund--H\"ubsch mirror symmetry implies K3 lattice mirror symmetry replacing earlier case-by-case treatments.
\end{abstract}

\setcounter{tocdepth}{1} 



\section{Introduction} 
The earliest formulation of mirror symmetry relates pairs of $d$-dimensional Calabi--Yau manifolds $X, X^\vee$ with {mirror Hodge diamonds}: 
\[h^{p,q}(X)=h^{d-p,q}(X^\vee).\]
In the early 1990s, physicists Greene, Morrison, and Plesser found many such mirror pairs \cite{greene}, starting with a Calabi--Yau (and Fermat) hypersurface in projective space and constructing a mirror, which is a resolution of the quotient of the same hypersurface by a finite group. In 1992, this construction was generalized by Berglund--H\"ubsch \cite{BH}, starting with a Calabi--Yau given as a quotient of a more general hypersurface in weighted projective spaces by a finite group. The hypersurface is a Calabi--Yau orbifold defined as the zero locus of a quasi-homogenous polynomial $W=\sum_{i=0}^n \prod_{j=0}^n x_{j}^{m_{ij}}$ such that $W$ is {non-degenerate} and ``{invertible}'' (\emph{i.e.} with as many variables as monomials). After quotienting out by a finite group $H$ of diagonal symmetries within $\SL(n+1;\CC)$ one obtains 
the orbifold $\Sigma_{W,H}$.
 The mirror $\Sigma_{W^\vee,H^\vee}$ is another such quotient of a hypersurface modulo a finite group.  The  hypersurface is given by the polynomial $W^\vee$, defined by transposing the matrix of the exponents $E=[m_{ij}]$ of $W$. The group $H^\vee$ is a subgroup of $\SL(n+1;\CC)$ Cartier dual to $H$ and 
preserving $W^\vee$, see \eqref{eq:groupduality}. Then, the mirror 
duality can be stated in terms of orbifold Chen--Ruan cohomology as 
\begin{equation}\label{eq:MScohom} H_{\CR}^{p,q}(\Sigma_{W,H};\CC)=H_{\CR}^{d-p,q}(\Sigma_{W^\vee,H^\vee};\CC),
\end{equation}
which implies the same relation in ordinary cohomology whenever there exists crepant resolutions.
%
%

\medskip

The striking mirror relation above becomes elementary when 
we look at it through the lenses of singularity theory or, in 
physics terminology, the Landau--Ginzburg (LG) model. 
This happens because mirror symmetry holds for LG models without any 
Calabi--Yau condition. 
In this paper we present this change of perspective through the LG model via 
the crepant resolution of a singularity, see Section \ref{LG}. 
This not only allows us to simplify previous proofs of 
LG/CY correspondence by the first author 
with Ruan \cite{ChRu}; it also yields a new statement 
of mirror symmetry 
relating the fixed loci of powers of 
an isomorphism $s$ of $\Sigma$, the Hodge decomposition, and the 
weights the representation $s^*$ in cohomology. 

Let $W=x_0^k+f(x_1,\dots,x_n)$ be a non-degenerate,  quasi-homogenous, invertible polynomial. 
Let us consider again the automorphisms groups $H\subseteq \Aut W$ and its dual $H^\vee\in \Aut W$ within
$\SL(n+1;\CC)$. The 
Calabi--Yau orbifolds $\Sigma_{W,H}$, $\Sigma_{W^\vee,H^\vee}$ are equipped with the 
action by the group $\pmmu_k$ of $k$th roots of unity spanned by 
$s\colon x_0 \mapsto e^{2\pi i/k} x_0$. For $i$ in 
the group of characters $\ZZ/k=\Hom(\pmmu_k;\GG_m)$ we 
consider the weight-$i$ term of cohomology 
$$H^*(\ \ ,\CC)_{i}=\{x\mid s^*x=i(s)x\}.$$
The first statement is that 
the $s$-invariant cohomology mirrors the ``moving'' cohomology: the sum of 
all cycles of nonvanishing weight.

\medskip
\noindent{\textbf{Theorem A}} (see Thm \ref{thm:MS_CY}, part 1). 
\textit{Consider the mirror pair $s\colon \Sigma_{W,H}\to \Sigma_{W,H}$ 
and $s\colon \Sigma_{W^\vee,H^\vee}\to \Sigma_{W^\vee,H^\vee}$. We have}
\[H^{p,q}_{orb}(\Sigma_{W,H};\CC)_{0}\cong 
\bigoplus_{i=1}^{k-1} H^{d-p,q}_{orb}(\Sigma_{W^\vee,H^\vee};\CC)_{i},\]
\noindent\textit{where $d=n-1$ is the dimension of $\Sigma_{W,H}$.}
\medskip

The locus of  geometric points of  $\Sigma_{W,H}$ which are fixed by $s$ 
also exhibits a mirror phenomenon. Since $\Sigma_{W,H}$ is a stack, let us provide a definition for this $s$-fixed locus. 
For $s$ a finite order automorphism acting on a smooth Deligne--Mumford orbifold,
we consider the graph of $\Gamma_s\colon \mathfrak X\to \mathfrak X\times \mathfrak X$ 
and its intersection with the graph of the identity (the diagonal morphism)
\[\mathfrak X{\underset{s, \ \mathfrak X\times \mathfrak X, \ \id}{\times}}\mathfrak X,\]
(we write $s$ and $\id$ instead of the respective graphs).  
We recall that orbifold cohomology is simply the (age-shifted) cohomology of this product for
$s=\id$. The $s$-orbifold cohomology is defined as the age-shifted cohomology of the above 
fibred product in general (see Defn.~\ref{defn:g_orbifoldcoh}). 
This is a bi-graded vector space and, if the coarse space $X$ of $\mf X$ admits a crepant resolution $\wt X$ where $s$ lifts, there is a bidegree-preserving isomorphism
$H_s^*(\mf X;\CC)\cong H_s^*(\wt X;\CC),$
where the right hand side is the age-shifted cohomology of the $s$-fixed locus in $\wt X$, see Prop.~\ref{pro:crc}.

Under the same conditions on $W$ and $H$ as above, set $\Sigma=\Sigma_{W,H}$ and $\Sigma^\vee=\Sigma_{W^\vee,H^\vee}$. If the order $k$ of $s$ is not prime, then $s$ acts non-trivially on the fixed locus of powers of $s$. The $s$-moving cohomology of the fixed locus of powers of $s$ mirrors the same on $\Sigma^\vee$, interweaving the weight and the exponent of the power of $s$. 

\medskip 
\noindent \textbf{Theorem B} (see Thm \ref{thm:MS_CY}, part 3).
\textit{Let $b,t\neq 0$. Then, we have }
\[\textstyle{H^{p,q}_{s^b}(\Sigma)\left(\frac{b}k\right)_{t}\cong 
H^{d-p,q}_{s^{-t}}(\Sigma^\vee)\left(\frac{k-t}k\right)_{-b}},\]
\noindent \textit{where $d=n-2$, 
the largest dimension of the components of the $s$-fixed locus.}
\smallskip

Finally, also the fixed cohomology of each power $s^j$  exhibits a mirror phenomenon, but only after adding certain moving cycles in $\Sigma$. Namely, the cycles we add are
all those whose weight differs from  $0$ (\emph{i.e.}~moving cycles) and from $j$ (the exponent of $s$). 
We denote this group 
by $\overline{H}^{p,q}_{\id,j}(\Sigma)$, see \eqref{eq:movingcycles}. 

\medskip 
\noindent \textbf{Theorem C} (see Thm \ref{thm:MS_CY}, part 2).
\textit{
For $0<j<k$, we have}
\begin{equation*}
\left[H^{p,q}_{s^j}(\Sigma)\left(\tfrac{j}{k}\right)\right]^s \oplus \overline{H}^{p,q}_{\id,j}(\Sigma)\cong \left[H^{d-p,q}_{s^j}(\Sigma^\vee)\left(\tfrac{j}{k}\right)\right]^s \oplus \overline{H}_{\id,j}^{d-p,q}(\Sigma^\vee).\end{equation*}
The correcting terms $\overline{H}^{*}$ disappear 
when $k=2$ (for $k=2$, we have $s^j=s$ and there 
is no positive weight except $1$). This shows how the statement above specialises 
to the construction of Borcea--Voisin mirror pairs (see \cite{CKV}).

In dimension 2, and after resolving, these results are about mirror symmetry for K3 surfaces with non-symplectic automorphisms. Suppose $X$ and $X^\vee$ are crepant resolutions of $\Sigma_{W,H}$ and $\Sigma_{W^\vee,H^\vee}$ where $W$ is a polynomial in $4$ variables. The above mirror theorems imply that the topological invariants of the fixed locus of the K3 surface $X$ controls that of $X^\vee$; we refer to Corollary \ref{cor:primeinv} for simple formulae on the number of fixed points and the genera of the fixed curves. 
The automorphism $s$ also gives the K3 surface a \emph{lattice polarization}: $H^2(X,\ZZ)^s$. There is another version of mirror symmetry for lattice polarized K3 surfaces, arising from the work of Nikulin \cite{Ni1}, Dolgachev \cite{Do-mirror}, Voisin \cite{Vo}, and Borcea \cite{Borcea}. When the order of $s$ is odd and prime, this lattice is characterised by the invariants $(r,a)$: the rank and the discriminant. Families of lattice polarized K3 surfaces come in mirror pairs, and in the odd prime case this mirror symmetry takes a lattice with invariants $(r,a)$ to $(20-r,a)$. 
The following corollary is a theorem of Comparin, Lyons, Priddis, and Suggs \cite{CLPS} proven by case-by-case analysis. Here, it is shown directly from 
the above statements (see Thm.~\ref{thm:LPK3}).

\smallskip 
\noindent \textbf{Corollary (\cite{CLPS}).} 
\emph{Let $p$ be prime and different from $2$. Let $\Sigma_{W,H}$ and $\Sigma_{W^\vee,H^\vee}$ be mirror K3 orbifolds with order-$p$ automorphisms $s, s^\vee$, and let $\Sigma$ and $\Sigma^\vee$ be crepant resolutions with automorphisms also denoted $s, s^\vee$. Then  $\Sigma$ and $\Sigma^\vee$ are mirror as lattice polarized K3 surfaces. }
\smallskip

\subsection{Relation to previous work}
This paper generalises the results of \cite{CKV}. There, only involutions were considered;  here the mirror theorems apply to automorphism of any order. There, Theorems A and C are simpler (invariant classes mirror anti-invariant classes in Theorem A and no extra terms appear in Theorem C). Theorem B does not apply in the involution case. In the above Corollary, we do not consider the order-2 case treated in \cite{CKV}; in the present paper this allows us to deduce the lattice mirror symmetry statement of \cite{CLPS} in full. 

Section \ref{LG} restates and recasts the proof of 
 mirror symmetry through LG models and the correspondence
between cohomology and LG models in terms of resolutions of singularities (see Theorem \ref{thm:MSCY}). 
This may be regarded as the outcome of the work of many authors, we refer to 
\cite{Krawitz},  \cite{Borisov},  \cite{kaufmann}
 \cite{ChRu}, \cite{ebeling_takahashi_2011}, \cite{GZE_Saito} and \cite{Gusein-Zade-Ebeling} and \cite{ChiodoNagel} 
 validating over the years the  
 approach of the physicists Intriligator--Vafa \cite{vafa} and Witten \cite{Witten}.
It is also worth mentioning that the main object of our study, a polynomial $W=x_0^k + f(x_1,...,x_n)$ with the cyclic symmetry group of $k$th roots of unity acting on $x_0$, was used in Varchenko's proof of semicontinuity of 
Steenbrink's spectra of singularities (\cite{Va} and \cite{Steenbrink2}). 
We hope that this may lead to further
explanations of mirror symmetry in the framework of singularity theory. 
In particular, our setup only concerns hypersurfaces in weighted projective space, it would be interesting to see if it extends to other contexts 
where mirror constructions are known. 

Finally, it is worth mentioning that the work of Comparin, Lyons, Priddis, and Suggs \cite{CLPS}, earlier work of Artebani, Boissi\`ere and Sarti \cite{ABS} and more generally Nikulin's classification \cite{Ni1} yield 
several tables summarising 
explicit treatments of K3 surfaces via resolution of singularities. 
Much of these data are now 
embodied into the $s$-weighted Hodge numbers of Theorems A, B, and C. We provide some 
examples for this in the tables at the end of \S \ref{sect:geometry}.

\subsection{Structure of the paper} 
Section \ref{sect:terminology} states notation and terminology. 
Section \ref{setup} presents the Berglund--H\"ubsch mirror symmetry construction.
Section \ref{inertia}  sets up our generalisation of 
orbifold cohomology sensing the $s$-fixed locus: $s$-orbifold cohomology.
Section \ref{LG} illustrates and reproves the transition to Landau-Ginzburg models which is crucial in the proof. In particular it  provides a  straightforward description of the LG/CY correspondence from 
the crepant resolution conjecture without using the combinatorial model of \cite{ChRu}.
Section \ref{sect:unprojected} is the technical heart of the paper; it proves 
the main theorem (Theorem \ref{thm:MS_LG}) on the LG side. 
Section \ref{sect:geometry} translates the result from the 
LG side  to the CY side. It contains Theorem \ref{thm:MS_CY}
proving the statements A, B, and C  and 
Theorem \ref{thm:LPK3} specialising to K3 surfaces.

\subsection*{Aknowledgements}
We are grateful to Behrang Noohi, whose explanations 
clarified inertia stacks to us. 
We are grateful to Baohua Fu, Lie Fu, Dan Israel, and Takehiko Yasuda for many 
helpful conversations. We thank Davide Cesare Veniani, with
whom we started studying involutions of Calabi--Yau orbifolds, for continuing to share his insights and
expertise. 

\section{Terminology} \label{sect:terminology}
Deligne--Mumford orbifolds are smooth separated Deligne--Mumford stacks
with a dense open subset isomorphic to an algebraic variety. 
\subsection{Conventions}
We work with schemes and stacks over the complex numbers. All schemes are Noetherian and separated.
By linear algebraic group we mean a closed subgroup of 
$\GL_m(\CC)$ for some $m$.
We often need to identify 
a stack locally. 
In order to avoid repeated mention of 
\'etale localization or strict Henselizations, we often use the expression 
``the 
local picture of the stack $\mf X$ at the geometric point $x\in \mf X$ is the same as $\mf U$ at $u\in \mf U$''. 
By this we mean that the strict Henselization of $\mf X$ at $x$ is the same of that 
of $\mf U$ at $u$. 
Often it is enough to say that there is an \'etale neighbourhood $X'$ of $x$ 
 and an isomorphism $X'\to U'$ with an \'etale neighbourhood $U'$ of $u$.
We refer to \cite[54.33.2]{stacksproject} for a definition of the 
strict Henselization 
and to \cite[\S1.2,5]{ACV} for further discussion (see in particular the ``algebra-to-analysis translation'', where strict Henselizations 
are described analytically as the germ of $X$ at $x$).

\subsection{Notation} We list here notation that occurs throughout the entire paper. 

\vspace{5pt} 

\noindent\begin{tabular}{ll}
$V^K$& is the invariant subspace of a vector space $V$ linearized by a finite group $K$.\\
$\PP(\pmb w)$ & is the quotient stack $[(\CC^n\setminus \pmb 0)/\GG_m]$, where $\GG_m$ acts with weights $\pmb w$.\\
$Z(f)$ & is the variety defined as 
 zero locus of $f\in \CC[x_1,\dots, x_n]$.\\
$H(a,b)$& is the bigraded vector space with shifted grading: $[H(a,b)]^{p,q}=H^{p+a,q+b}$.\\
 
\end{tabular}

\medskip

\begin{rem}[zero loci]\label{rem:zeroloci} We add the subscript 
$\PP(\pmb w)$ when we refer to the zero locus in $\PP(\pmb w)$ of a polynomial $f$ which is $\pmb w$-weighted homogeneous. In this way we have
\[Z_{\PP(\pmb w)}(f)=[U/\GG_m],\qquad \text{\ \ with $U=Z(f)\subset \CC^n\setminus \pmb 0$.}\]
\end{rem}

\begin{rem}[degree shift]
We often write $H(a)$ for $H(a,a)$.
\end{rem} 

\begin{rem}[cohomology coefficients]
We only consider cohomology with $\CC$ coefficients; therefore, we sometimes write $H^*(X;\CC)$ as $H^*(X).$
\end{rem}

\begin{rem}[graphs and maps]\label{rem:abuse_notn_graph}
Given an automorphism $\alpha$ of $\mathfrak X$, 
we write $\Gamma_\alpha$ for 
the graph $\mathfrak X\to 
\mathfrak X\times \mathfrak X$. However, to simplify 
formul\ae, 
we often abuse notation 
and use $\alpha$ for the graph $\Gamma_\al$ as well as the automorphism. In this way, in subscripts, the diagonal 
$\Delta \colon \mathfrak X\to \mathfrak X\times \mathfrak X$ will be often written as $\id_{\mathfrak X}$ or simply $\id$.
\end{rem}

\section{Setup}\label{setup}
We recall the general setup of non-degenerate polynomials $P$ where
the theory of Jacobi rings applies. Then we introduce polynomials 
of the special form $$W(x_0,x_1,\dots,x_n)=x_0^k+f(x_1,\dots, x_n)$$
for $n>0$.

\subsection{Non-degenerate polynomials}\label{sect:nondegpoly}
We consider quasi-homogeneous polynomials $P$ of degree $d$ and 
of weights $w_0,\dots,w_n$
$$P(\la^{w_0}x_0,\dots,\la^{w_n}x_n)=\la^dP(x_0,\dots,x_n),$$
for all $\la\in \CC$.
We assume that that the polynomial $P$ is non-degenerate; \emph{i.e.}
the choice of weights and 
degree is unique and  the partial derivatives of $W$ vanish 
simultaneously only at the origin. 
We consider the zero locus 
$$\Sigma_P=Z_{\PP(\pmb w)}(P)\subset \PP(\pmb w)$$ 
which is, by non-degeneracy, a smooth hypersurface within 
the weighted projective stack $\PP(\pmb w)=
[(\CC^{n+1}\setminus \pmb 0)/\GG_m]$ with $\GG_m$ acting 
with weights  $w_0,\dots,w_n$.
The polynomial is of Calabi-Yau type if 
\begin{equation}\label{eq:CY}\sum_{i=0}^n{w_i}=d.\end{equation}
This implies that the canonical bundle of ${\Sigma_P}$
is trivial; we refer to $\Sigma_P$ as a 
Calabi--Yau orbifold.

Because $P$ is non-degenerate, 
the group of its diagonal automorphisms
$$\Aut_P=\{\diag(\al_0,\dots,\al_n)\mid P(\al_0x_0,\dots, \al_nx_n)=P(x_0,\dots,x_n)\}$$
 is finite. Indeed, the $n\times h$ matrix $E=(m_{i,j})$ of the exponents of 
$P=\sum_{i=1}^h c_i \prod_{j=0}^n x_j^{m_{i,j}}$
is left invertible as a consequence of the uniqueness of the 
vector $\left({w_i}/{d}\right)_{i=0}^n=E^{-1}\pmb 1.$
Since we are working over $\CC$, we adopt the notation 
$$[a_0,\dots,a_n]=\diag(\exp(2\pi\cxi a_i))_{i=0}^n$$
for $a_i\in \QQ\cap [0,1[$.
The age of the diagonal matrix above is 
$$\mathrm{age}[a_0,\dots,a_n]=\sum_{i=0}^na_i.$$
The distinguished diagonal symmetry 
$$j_P=\left[\frac{w_0}{d},\dots,\frac{w_n}{d}\right],$$
usually denoted by $j$, spans the intersection $\Aut_P\cap \GG_m$, where 
$\GG_m$ is the group of automorphisms of 
the form $\diag(\la^{w_0},\dots,\la^{w_n})$.
The automorphism $j_P$ is the monodromy operator of the fibration 
defined by 
$W$ restricted to the complement in $\CC^{n+1}$ of the 
zero locus $Z(P)$; we will denote by $M_P$ the generic Milnor fibre
\begin{equation}\label{eq:Milnor}
\xymatrix@R=.5pc{
M_P\ar[rr]\ar[dd] &&\CC^{n+1}\setminus Z(P)\ar[dd]^P\\
&\square&\\
t\neq0 \ar[rr]&&\CC^\times.
}
\end{equation}

For any subgroup $H$ of $\Aut_W$ containing $j_P$ we consider the 
Deligne--Mumford stacks $$\Sigma_{P,H}=[\Sigma_P/H_0],\qquad M_{P,H}=[M_P/H];$$
where $H_0=H/(H\cap \GG_m)=H/\langle j\rangle$ and 
acts faithfully on $\Sigma_P$. The orbifold $\Sigma_{P,H}$ 
is a smooth codimension-$1$ substack of $[\PP(\pmb w)/H_0]$
$$\Sigma_{P,H}\subset [\PP(\pmb w)/H_0],$$
and has trivial canonical bundle as soon as $P$ is Calabi--Yau and 
$H$ lies in 
$$\SL_P:=\Aut_P\cap \SL(n+1;\CC).$$

\subsection{Polynomials with automorphism}\label{sec:polyaut}
More specifically, we focus on 
polynomials of Calabi--Yau type of the form 
$$W(x_0,x_1,\dots,x_n)=(x_0)^k+f(x_1,\dots,x_n)$$
We have 
$\Aut_W=\pmmu_k\times \Aut_f$, 
where, using again the choice $\exp(2\pi i/k)$, the first factor is regarded here 
as $\ZZ/k$, canonically 
generated by 
the order-$k$ automorphism 
\begin{equation}\label{eq:symmetry}
s=\left[\frac1k,0,\dots,0\right].\end{equation}
We have a $\ZZ/k$-action on the stack $\Sigma_{W,H}$ 
$$\mathsf{m}\colon \ZZ/k\times \Sigma_{W,H}\to \Sigma_{W,H}.$$
We have  $j_W=s\cdot j_f$; where $j_f$ is regarded as an element of 
$\Aut_f$. 

Instead of $H\subseteq \Aut W\cap \SL(n+1;\CC)$ 
containing $j_W$, we can equivalently work with subgroups  
$K \subset \Aut_f$ satisfying 
$$(j_f)^k\in K\subseteq \SL_f,$$
(we recover $H$ by considering the subgroup of $\Aut W$ spanned by  $j_W$ and $K$).
More generally we consider the subgroup of $\Aut_W$  
$$K[j_W,s]=\sum_{a,b=0}^{k-1} (j_W)^{a}(s)^{b}K,$$ with its natural  
 $(\frac1k\ZZ/\ZZ)$-gradings 
 $$d_j=\frac{a}{k},\qquad d_s=\frac{b}{k}.$$
By \eqref{eq:CY}, $\exp(2\pi\cxi d_s(g))$ is the determinant of an 
element $g\in K[j_W,s]$. 

\section{Inertia} \label{inertia} 
We consider a finite group $G$ acting on a Deligne--Mumford orbifold $\mf X$ 
$$\mathsf{m}\colon G\times \mf X \to \mf X.$$
We consider the $G$-inertia stack $I_G(\mf X)$
fitting in the following fibre diagram
\[
\xymatrix@R=.5pc{
I_G(\mf X)\ar[rr]\ar[dd] &&\mf X \ar[dd]_{\Delta}\\
&\square&\\
G\times \mf X \ar[rr]_{(\mathsf{m},\mathsf{pr_2})}&& \mf X\times \mf X 
}
\]
When $G$ is a trivial group, $I\mf X$ is the ordinary inertia of $\mf X$. 
There is a locally constant function $$\mf a\colon I\mf X\to \QQ$$
which assigns to each geometric point $(x\in \mf X, g\in \Aut(x))\in I\mf X$
the rational number $\age(g)$ (see  \cite{AGV}
and \cite[\S4.2]{CKV}).

In this way we have 
$$I_G(\mf X)=(G\times \mf X)\times_{(\mathsf{m},\mathsf{pr_2}),\,
\mathfrak X\times \mathfrak X,\,\Delta}\mf X=\bigsqcup_{g\in G} I_g(\mf X)$$
where $I_g(\mf X)$ is the $g$-inertia orbifold 
\begin{equation}I_g(\mf X)\label{eq:g_inertiastack}:=\mathfrak X{\underset{g, \ \mathfrak X\times \mathfrak X, \ \id}{\times}}\mathfrak X.\end{equation} 

The $G$-inertia stack of $\mf X$
fits in the fibre diagram
\[
\xymatrix@R=.5pc{
I_G(\mf X)\ar[rr] \ar[dd]^{\mathsf p} &&\mf X \ar[dd]\\
&\square&\\
 I[\mf X/ G]\ar[rr] && [\mf X/G]
}
\]
and we may  regard $\mathsf p$ as a $G$-torsor by pullback of $\mf X\to [\mf X/G]$. 

The $G$-action on $\bigsqcup_{g\in G} I_g\mf X$ 
is given by conjugation on the indices and by $F_h \colon I_g \mf X \to  
I_{hgh^{-1}} \mf X$ 
on the components, where $F_h$ acts by the effect 
of $h$ on the first factor of \eqref{eq:g_inertiastack}
and by the identity on the second. 

\subsection{A $g$-orbifolded cohomology}
The cohomology of the $g$-inertia stack coincides with the cohomology of 
the $g$-fixed locus when $\mf X$ is representable. 
In the spirit of orbifold cohomology we define 
 $g$-orbifold cohomology groups 
which are invariant under $K$-equivalence. 

The $g$-orbifold cohomology is the cohomology of \eqref{eq:g_inertiastack}
shifted by the locally constant function ``age'' given by 
\begin{equation}\label{eq:age}\mf a_g\colon I_g(\mf X)\to I[\mf X/G]\xrightarrow{\mf a} \QQ.\end{equation}
We assume that $\mf X$ is smooth, so that $I_g(\mf X)$ is smooth and 
all coarse spaces are quasi-smooth; in particular cohomology groups admit a Hodge
decomposition. 
Starting from a Hodge
decomposition of weight $n$, for any $r\in \QQ$, we can produce a new 
decomposition of weight $n-2r$ via 
$\label{eq:Tate}H(r)^{p,q}=H^{p+r,p+r}.$ We will denote by  
$(r)$ the isomorphism induced by the identity at the level 
of the vector spaces; it identifies the Hodge decomposition 
of weight $n$ with the Hodge decomposition of weight $n-2r$.

 We can now provide the definition of $g$-orbifolded cohomology. 

\begin{defn}[$g$-orbifold cohomology]\label{defn:g_orbifoldcoh}
For any $g\in G$ the $g$-orbifold cohomology is defined as
$$H_g^*(\mf X;\CC)=H^*(I_g(\mf X);\CC)(-\mf a_g).$$
\end{defn}

We point out the slight abuse of notation: $\age$ is not constant in 
general, but, since it is locally constant, the shift operates independently on each cohomology group arising from each connected component. A  precise notation should read 
\[ H^{p,q}(\ \ ;\CC)(\mathfrak a)=\bigoplus_{r\in \QQ_{\ge 0}}H^{p,q}(\mathfrak a^{-1}(r);\CC)(-r). \]

For $g=\id=1_G$, the above definition coincides with Chen--Ruan orbifold cohomology
$$H_{\id}^*(\mf X;\CC)=H_{\mathrm{CR}}^*(\mf X;\CC).$$
In this paper, we often consider the relative version of orbifold Chen--Ruan cohomology; indeed 
when $\mf Z$ is a substack of $\mf X$ then $I(\mf Z)$ is a substack of $I(\mf X)$ 
and we set 
$$H_{\id}^*(\mf X,\mf Z;\CC)=H^*(I(\mf X),I(\mf Z);\CC)(-\mf a_{\id}),$$
where $\mf a_{\id}$ is the age function on $I(\mf X)$.

Yasuda \cite{Yasuda} proves the invariance of 
the Hodge decomposition of Chen--Ruan cohomology 
of smooth Deligne--Mumford stacks $\mathfrak X$ and $\mathfrak Y$ 
whenever
there exists a smooth and proper Deligne--Mumuford stack $\mf Z$ with birational 
morphisms $\mf Z\to \mf X$ and $\mf Z\to \mf Y$ 
with $\omega_{\mf Z/\mf X}\cong \omega_{\mf Z/\wt X}$. 
In particular, for Gorenstein orbifolds 
 Chen--Ruan cohomology coincides with 
the cohomology of any crepant resolution of the 
coarse space. 
Furthermore, we have the following proposition.
\begin{pro}\label{pro:crc}
Let $G$ by a finite group acting on a Gorenstein orbifold $\mf X$. Let us assume that
the coarse space $X$ of $\mf X$ admits a crepant resolution $\wt X$ where we 
can lift the $G$-action 
induced by $\mf X$ on $X$. Then, for any $g\in G$ we have 
a bidegree-preserving isomorphism 
$$H_g^*(\mf X;\CC)\cong H_g^*(\wt X;\CC).$$
In particular, the isomorphism identifies $H_g^*(\mf X;\CC)$ with 
$H^*(\wt X_g;\CC)(-\wt{\mf a}_g)$,
where $\wt{\mf a}_g$ is the composite $\wt X_g\to [\wt X/G]$ and of the age function 
$[\wt X/G]\to \QQ$. 
\end{pro}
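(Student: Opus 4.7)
The ``in particular'' identification is tautological: since $\wt X$ is a smooth variety carrying a $G$-action by finite-order automorphisms, its $g$-inertia $I_g(\wt X)$ is literally the $g$-fixed closed smooth subvariety $\wt X_g$, and the age function on $I[\wt X/G]$ restricts to $\wt{\mf a}_g$ on $\wt X_g$; hence $H_g^*(\wt X)=H^*(\wt X_g)(-\wt{\mf a}_g)$ directly from Definition \ref{defn:g_orbifoldcoh}.

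For the main isomorphism, the plan is to deduce it from Yasuda's theorem, recalled just above the proposition, applied in a $g$-equivariant fashion. Since the statement only involves the single element $g$, I may restrict the $G$-action to the cyclic subgroup $\langle g\rangle$ and assume $G=\langle g\rangle$ from now on. I would first build a common roof: an equivariant desingularization $\mf Z$ of the $G$-equivariant fibre product $\mf X\times_X \wt X$ (via equivariant Hironaka) produces proper birational $G$-equivariant morphisms $\pi_{\mf X}\colon\mf Z\to\mf X$ and $\pi_{\wt X}\colon\mf Z\to\wt X$. The Gorenstein hypothesis ($\omega_{\mf X}$ pulls back from $\omega_X$) combined with crepancy of $\wt X\to X$ forces $\omega_{\mf Z/\mf X}\cong\omega_{\mf Z/\wt X}$. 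Passing to $G$-quotients yields a roof $[\mf Z/G]\to[\mf X/G],\,[\wt X/G]$ between smooth DM stacks satisfying Yasuda's crepancy hypothesis.

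Yasuda's theorem then delivers a bidegree-preserving isomorphism $H^*_{\CR}([\mf X/G])\cong H^*_{\CR}([\wt X/G])$. For the cyclic group $G=\langle g\rangle$, both sides split into twisted sectors indexed by $h\in G$,
\[H^*_{\CR}([\mf X/G])=\bigoplus_{h\in G} H^*([I_h(\mf X)/G])(-\mf a_h),\]
and similarly for $\wt X$. I would argue that Yasuda's isomorphism respects this decomposition: the $h$-inertia of the roof $\mf Z$ provides a roof between $I_h(\mf X)$ and $I_h(\wt X)$ inheriting the crepancy condition (the normal-bundle weights of $h$ match along the roof thanks to the equivariant resolution), so Yasuda applied inertia-component by inertia-component gives, in the $g$-sector,
\[H^*([I_g(\mf X)/G])(-\mf a_g)\cong H^*([\wt X_g/G])(-\wt{\mf a}_g).\]
Finally, since $G=\langle g\rangle$ and every point of the $g$-fixed locus is by construction $g$-fixed, the action of $G$ on $I_g(\mf X)$ and on $\wt X_g$ is trivial (up to $2$-isomorphism on the stack side), so both quotients are neutral gerbes whose rational cohomology agrees with $H^*(I_g(\mf X))$ and $H^*(\wt X_g)$ respectively. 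Substituting gives the desired $H^*_g(\mf X)\cong H^*_g(\wt X)$.

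The main obstacle is the sector-by-sector compatibility of Yasuda's isomorphism in the third paragraph; justifying it requires either a mild equivariant strengthening of Yasuda's theorem extracted from its motivic-integration proof, or an independent application of Yasuda's theorem to the inertia roof $I_g(\mf Z)\to I_g(\mf X),\,I_g(\wt X)$ after verifying that the relative canonical bundles and the age shifts match correctly along the pullback. Every other step is formal or standard.
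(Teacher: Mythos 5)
Your proposal follows essentially the same route as the paper: restrict to the cyclic group $\langle g\rangle$, resolve the (reduced) fibre product over $[X/\langle g\rangle]$ to obtain a crepant roof between $[\mf X/\langle g\rangle]$ and $[\wt X/\langle g\rangle]$, invoke Yasuda's theorem, and extract the $g$-sector using the fact that $\langle g\rangle$ acts trivially on $I_g(\mf X)$ and on $\wt X_g$, so that taking invariants is harmless. The sector-by-sector compatibility that you flag as the main obstacle is also the delicate point in the paper's own proof, which simply asserts that the $g$-sectors appear as matching summands (with the age shifts factoring through the age function of $[\mf X/\langle g\rangle]$) without further elaboration.
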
 
\begin{proof}
The stack $\mf X$ and its resolution $\wt X$ are $K$-equivalent.
In order to see this, we  consider the 
 $\mf Z=\mf X\times _X \wt X$ and the associated reduced stack. 
Then, there exists a proper birational morphism $\mf Z' \to \mf Z$ such that $\mf Z'$ is smooth. 
This is explained in Sect.~4.5, \S2, of Yasuda's paper \cite{Yasuda} 
(this is essentially due to Villamayor papers \cite{Vil1} and \cite{Vil2} showing  the existence of 
resolutions compatible with smooth, in particular \'etale, morphisms). Actually, 
in his recent generalization \cite{YasudaWild}, Yasuda 
proves that it suffices to consider the reduction and the 
normalization of $\mf Z'$, without
 any resolution. This happens because his new statements allows us to
extend the definition 
of orbifold cohomology
to singular or wild (in positive characteristic) 
Deligne--Mumford stacks.

Now we consider the abelian group $H=\langle g\rangle$. Then 
$\mf A'=[\mf X/H]$ and 
$\mf A''=[\wt X/H]$ are $K$-equivalent by the same argument. Indeed the action of $H$ 
descends compatibly to 
the coarse space $X$ and we can consider the stack $\mf A=[X/H]$ and 
the morphisms $\mf A'\to\mf A$ and $\mf A''\to \mf A$. Then, the reduced stack associated
to the fibred product 
$\mf A'\times_{\mf A} \mf A''$ can be resolved and yields a smooth Deligne--Mumford 
stack $\mf Z$ mapping to $\mf A'$ and $\mf A''$. As above, the fact that 
the canonical bundles of 
$\mf X$ and $\wt X$ are the pullback of $\omega_X$ is enough to show that $\mf Z\to 
\mf A'=[\mf X/H]$ and $\mf Z\to \mf A'=[\mf X/H]$ is a $K$-equivalence.

The desired claim follows because the 
cohomology of $I_g\mf X$ and that of $I_g\wt X$ appear as 
summands of the Chen--Ruan cohomology groups of $[\mf X/H]$ and of $[\wt X/H]$. 
Indeed they arise as the cohomology groups of the 
sectors attached to $g$ whose cohomology are 
the $g$-invariant classes of $I_g(\mf X)$ and $I_g(\wt X)$. 
Since $g$ operates trivially on these sectors, we can regard these contributions as $H^*(I_g(\mf X); \CC)$  and $H^*(I_g(\wt X);\CC)$. We should further mention 
that we obtain an identification at the level of the age-shifted $g$-orbifolded cohomology $H_g^*(\ \ ;\CC)$ due to the fact that 
the age  is a rational function factoring through the 
age function of $[\mf X/H]$ and of $[\wt X/H]$.
\end{proof}
\begin{rem}
The proposition above only claims the existence of an isomorphism. In special cases 
in dimension $2$ we have proven the existence of an explicit isomorphism, see \cite{CKV}.  
\end{rem}
In special cases where 
$\wt{\mf a}_g$ is constant, the above theorem 
allows us to relate the $g$-orbifold cohomology to the cohomology
of the $g$-fixed locus of the resolution via a constant shift by $\wt{\mf a}_g$.
The following example generalises the case of 
anti-symplectic involutions of orbifold K3 surfaces considered in \cite{CKV} 
(this case occurs below for $k=2$).
\begin{exa}\label{eg:dim2yasuda}
Consider a proper, 
smooth, Gorenstein, Deligne--Mumford orbifold $\mf X$ of dimension 2 satisfying the
 Calabi--Yau condition $\omega\cong \Ocal$. We refer to this as a K3 orbifold because 
 there exists a minimal resolution $\wt X$ which is a K3 surface. 
 Consider the volume form $\Omega$ of $\wt X$, which descends on $\mf X$. 
 We assume that $g$ is an order-$k$ automorphism of $\mf X$ whose induced action on  $\Omega$ 
 is multiplication by $e^{2 \pi i (k-1)/k}$. Then, 
 $g$ naturally lifts to the minimal resolution $\wt X$; furthermore, locally at each fixed point of $\wt X$,
 the action of $g$ can be written as $\frac1k[a,b]$ with $a+b=k-1$ (this happens because 
 the case $a+b=2k-1$ is impossible). In this way the age shift $\mathfrak a_g$ at the fixed loci always equals $1-1/k$ 
$$H_g^*(\mf X;\CC)=H^*(\wt X_g ;\CC)(\textstyle{\frac1k-1}).$$
\end{exa}

\section{Landau--Ginzburg state space} \label{LG}
The expression ``Landau--Ginzburg'' comes from 
physics and 
is often used for $\CC$-valued functions defined 
on vector spaces possibly equipped with the action of a group. More generally
the definition is 
extended to vector bundles on a stack. 
In this paper we only use it for the above setup $P\colon [\CC^{n+1}/H]\to \CC,$
where $P$ is a non-degenerate polynomial and $j\in H\subseteq \Aut_P$. 
Indeed this may be regarded as a $\CC$-valued function defined on 
a rank-$(n+1)$ vector bundle on the stack $BH=[\Spec \CC/H]$. 
We show how this geometric setup is 
naturally connected to $\Sigma_{W,H}$ via $K$-equivalence. 

\subsection{$K$-equivalence}\label{subsect:Kequivalence}
Consider the rank-$(n+1)$ vector bundle 
$$\VV=\Ocal_{\PP(d)}(-w_0)\oplus \dots\oplus \Ocal_{\PP(d)}(-w_n)=[\CC^{n+1}/\langle j\rangle];$$
its coarse space $X=\CC^{n+1}/\langle j\rangle$, and the smooth Deligne--Mumford stack  
$$\mathbb L=\Ocal_{\PP(\pmb w)}(-d)\to X,$$
total space $\LL$ of the line bundle of degree $-d$ on $\PP(\pmb w)$. The 
stacks $\VV$ and $\LL$ are the  two GIT quotients of $\CC\times\CC^{n+1}$ modulo $\GG_m$ operating with weights $(-d,w_0,\dots,w_{n+1})$.
Notice that $\VV$ without the origin coincides with the line bundle 
$\LL$ without the zero section: $\VV^\times =\LL^\times$. 

We assume that $P$ is of Calabi--Yau type in the sense of \eqref{eq:CY}. 
Then, the canonical bundle of $\VV$ descends to $X$ and its pullback 
to $\LL$ coincides with $\omega_\LL$. 
Following the same argument as above, by Yasuda \cite{Yasuda},
we have 
\begin{equation}\label{eq:Yasuda}
\Phi\colon H^{p,q}_{g}(\VV;\CC)\xrightarrow{\cong} H^{p,q}_{g}(\LL;\CC)\end{equation}
for any $p,q\in\QQ$ and for any $g\in \Aut_W$. 

The isomorphism $H^d_{g}(\VV;\CC)\to H^d_{g}(\LL;\CC)$ 
is not canonical; notice, however, that 
we can at least impose a compatibility with respect to the 
restrictions 
$H^d_{g}(\VV;\CC)\to H^d_{g}(\FF;\CC)$ and 
$H^d_{g}(\LL;\CC) \to H^d_{g}(\FF;\CC)$  
for $\FF=[M_P/\langle j \rangle]$ included in $\VV^\times = \LL^\times\subseteq \VV,
\LL$.
This happens because the fundamental classes of the 
inertia stacks $I_g({\VV})$, $I_g(\LL)$ and $I_g(\FF)$ 
attached to the same automorphism 
$\beta=g\cdot(\la^{-d},\la^{w_1},\dots,\la^{w_n})$ with
 $\la\in \cup_{j=1}^m \pmmu_{w_j}$ can be identified
 since their bidegree equal $(\age(\beta),\age(\beta))$ by construction. In this way,
 we can require that \eqref{eq:Yasuda}
 respects the canonical identification between the fundamental 
 classes of $I_g({\VV})$ and $I_g(\LL)$ and this is enough to 
 insure that the following diagram commutes
\[
\xymatrix@C=1.1pc{
\dots \ar[r] 
&H^{d}_{g}(\VV,\FF;\CC)\ar[r]\ar[d]& H^d_{g}(\VV;\CC)\ar[d]_{
\Phi} \ar[r]& 
H^d_{g}(\FF;\CC)\ar[d]_=\ar[r]& H^{d+1}_{g}(\VV,\FF;\CC)\ar[r]\ar[d]&\dots   \\
\dots\ar[r]&H^{d}_{g}(\LL,\FF;\CC)\ar[r]& H^d_{g}(\LL;\CC) \ar[r]& 
H^d_{g}(\FF;\CC)\ar[r]& H^{d+1}_{g}(\LL,\FF;\CC)\ar[r]&\dots  
}
\]
 and yields a bidegree-preserving isomorphism 
 $$H^d_{g}(\VV,\FF;\CC)\cong H^d_{g}(\LL,\FF;\CC).$$
 Note that $\FF$ can be regarded as 
 the generic fibre of $P\colon \VV\to \CC$ as well as the 
 generic fibre of $P\colon \LL\to \CC$. 
 If we consider any group $H\subseteq \Aut_P$ containing $j$
 we can apply the above claim to $\VV_H=[\VV/H_0]$, 
 $\LL_H=[\LL/H_0]$ and $\FF_{P,H}
 =[\FF/H_0]$. We get 
 \begin{equation}\label{eq:LGCY_Kequiv}
H^{p,q}_{g}(\VV_{H},\FF_{P,H};\CC)\cong H^{p,q}_{g}(\LL_H,\FF_{P,H};\CC)
 \end{equation}
 for any $p,q$ and for any $g\in \Aut_W$. 
 
   The left hand side is naturally identified via the Thom isomorphism to the Chen--Ruan cohomology of 
  $\Sigma_{P,H}$ up to a (-1)-shift whereas the left hand side is naturally identified to 
  an orbifold version of the Jacobi ring known as the FJRW 
  or Landau--Ginzburg state space.
 We detail these two aspects in the next two sections.

\subsection{Thom isomorphism}
Consider $P\colon \LL\to \CC$
and its generic fibre $\FF=P^{-1}(t)$ for $t\neq 0$.
We have an isomorphism of Hodge structures
\begin{equation}\label{eq:Thom}
H^*(\LL, \FF;\CC)\cong H^*(\Sigma_{P};\CC)(-1).\end{equation}
This happens because the left hand side can be regarded after retraction as 
\[H^*(\PP(\pmb w),\PP(\pmb w)\setminus \Sigma_P;\CC)\] which is isomorphic 
to the $(-1)$-shifted cohomology of $\Sigma_P$ by the Thom isomorphism.

Equation \eqref{eq:Thom} suggests that the orbifold cohomology $H^{p,q}_{\id}(\LL_H,\FF_{P,H};\CC)$ is 
related to the orbifold cohomology of $\Sigma_P$. 
However, the argument above does not yield an isomorphism respecting the 
orbifold cohomology bidegree. 
This happens because 
\[H^{*}(\LL_H,\FF_{P,H};\CC)\cong H^*(\PP(\pmb w),\PP(\pmb w)\setminus \Sigma_P;\CC)\cong H^*(\Sigma_{P};\CC)(-1)\]
may fail in orbifold cohomology. 
However the first author and Nagel proved that  
Equation \eqref{eq:Thom} holds in orbifold cohomology without changes even 
when the Thom isomorphism
$H_{\id}^*(\PP(\pmb w),\PP(\pmb w)\setminus \Sigma_P;\CC)\cong H_{\id}^*(\Sigma_{P};\CC)(-1)$
does not. We regard 
$$H_{\id}^*(\LL, \FF;\CC)\cong H_{\id}^*(\Sigma_{P};\CC)(-1)$$
as the correct formulation of Thom isomorphism in orbifold cohomology. We refer to 
the theorem below.

For the benefit of the reader we illustrate the study 
of $H_{\id}^*(\PP(\pmb w),\PP(\pmb w)\setminus \Sigma_P;\CC)$ and 
of $H_{\id}^*(\Sigma_{P};\CC)(-1)$
sector-by-sector. We 
distinguish two cases. 
For ordinary sectors (such as the untwisted sector) 
$\Sigma_{P,g}$ is a codimension-$1$ 
hypersurface in $\PP(\pmb w_g)$. Then, there is an identification 
$H^*(\PP(\pmb w_g),\PP(\pmb w_g)\setminus \Sigma_{P,g};\CC)\cong H^*(\Sigma_{P,g};\CC)(-1)$ and the age shift of $\PP(\pmb w_g)$ coincides with that of $\Sigma_{P,g}$ since 
$g$ acts trivially on the normal bundle $N_{\Sigma_{P,g}/\PP(\pmb w_g))}$. 
On the other hand, it may  happen 
that $\PP(\pmb w_g)$ and $\Sigma_{P,g}$ coincide as we illustrate in Example \ref{exa:nonGorenstein}. 
In these cases we have 
$H^*(\PP(\pmb w_g),\PP(\pmb w_g)\setminus \Sigma_{P,g};\CC)=
H^*(\PP(\pmb w_g);\CC)\cong H^*(\Sigma_{P,g};\CC)$
without Tate shift $(-1)$.
Furthermore the difference between the age shift of the sector 
$\PP(\pmb w_g)$ and that of $\Sigma_{P,g}$ is strictly positive: it equals the age 
$q\in ]0,1[$ of the character 
operating via $g$ on $N_{\Sigma_{P,g}/\PP(\pmb w_g))}$. 
In these cases we have  
$H^*(\PP(\pmb w_g),\PP(\pmb w_g)\setminus \Sigma_{P,g};\CC)\cong H^*(\Sigma_{P,g};\CC)(-q)$. It is now possible to observe that we have 
$$H^*_{\id}(\LL_g,\LL_g\setminus \Sigma_{P,g};\CC)\cong H_{\id}^*(\Sigma_{P,g};\CC)(-q-(1-q))=
H_{\id}^*(\Sigma_{P,g};\CC)(-1)$$
as desired. The result is proven in 
\cite{ChiodoNagel} for all complete intersections. We get 
\begin{thm}[Thom isomorphism, \cite{ChiodoNagel}] 
For any $H\subseteq \Aut_P$ containing $j_P$ and $g\in \Aut_P$ and 
for any $p,q\in \QQ$, we have 
\begin{equation}\label{eq:OrbiThom}
H^{p,q}_{g}(\LL_H, \FF_{P,H};\CC)\cong H^{p,q}_{g}(\Sigma_{P,H};\CC)(-1).
\end{equation}
\end{thm}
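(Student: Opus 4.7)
The plan is to reduce the statement to the Chen--Ruan version of the orbifold Thom isomorphism proven in \cite{ChiodoNagel} by performing a sector-by-sector analysis on the $g$-inertia stack. The advantage of this approach is that, for $g=\id$, the statement is already in the cited paper; the only additional work needed here consists of verifying that the sector-by-sector identification is compatible with the $g$-twisting data and with the locally constant age function $\mf a_g$.

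First, I would decompose the $g$-inertia stack $I_g(\LL_H)$ as a disjoint union of components indexed (up to conjugation) by the elements $h\in H_0$ such that $gh$ has fixed points: this is the standard description of the twisted sectors for $[\LL/H_0]$ whose ``first coordinate'' lies in the conjugacy class of $g$. The analogous decomposition holds for $I_g(\FF_{P,H})$ and for $I_g(\Sigma_{P,H})$. Each sector is of the form $[\LL^{gh}/C(gh)]$, $[\FF^{gh}/C(gh)]$, and $[\Sigma_P^{gh}/C(gh)]$ respectively, where $C(gh)$ is the centraliser. In particular, each $g$-sector of the pair $(\LL_H,\FF_{P,H})$ is an ordinary twisted sector of the pair $(\LL_{H\cdot\langle g\rangle},\FF_{P,H\cdot\langle g\rangle})$ in the sense of Chen--Ruan, and likewise for $\Sigma_{P,H}$.

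Having reduced to a single sector, I would apply the Chen--Ruan Thom isomorphism of \cite{ChiodoNagel} componentwise. Following the discussion preceding the theorem, two cases arise: either (a) $\Sigma_P^{gh}$ has codimension one in $\PP(\pmb w)^{gh}$, in which case the classical Thom isomorphism produces the Tate shift $(-1)$ directly and the age of the $\LL$-sector coincides with that of the $\Sigma_P$-sector because $gh$ acts trivially on the normal bundle $N_{\Sigma_P^{gh}/\PP(\pmb w)^{gh}}$; or (b) the hypersurface sector ``degenerates'', meaning $\Sigma_P^{gh}=\PP(\pmb w)^{gh}$, in which case there is no classical Thom isomorphism on the hypersurface side but the age of the ambient weighted projective sector differs from that of $\Sigma_P^{gh}$ by the age $q\in {]0,1[}$ of the character of $gh$ on the normal bundle, and the $\LL$-side contributes a complementary shift $1-q$ from the line bundle direction. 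In either case the combined Tate shift assembles to $-1$, yielding the desired isomorphism sector by sector. Finally I would reassemble the sectors to obtain the global bidegree-preserving isomorphism.

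The main obstacle is the bookkeeping in case (b): it must be verified that the sum of the normal-direction age contributions on the $\LL$-side and the $\PP(\pmb w)$-side exactly equals $1$ and that, in the resulting identification, the locally constant function $\mf a_g$ on $I_g(\LL_H)$ matches the $(-1)$-shifted function $\mf a_g$ on $I_g(\Sigma_{P,H})$. This is precisely the computation carried out in \cite{ChiodoNagel}, and the present generalisation only requires tracking the extra index $g$ through the argument, which does not alter the age-shift balance since $g$ enters symmetrically in all three stacks $\LL_H$, $\FF_{P,H}$, and $\Sigma_{P,H}$.
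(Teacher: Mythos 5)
Your proposal follows essentially the same route as the paper: the paper also reduces to a sector-by-sector analysis, distinguishes the same two cases (codimension-one sectors with the classical Thom shift $(-1)$, versus degenerate sectors where $\Sigma_{P,g}=\PP(\pmb w_g)$ and the shift assembles as $-q-(1-q)=-1$ from the normal-bundle character and the line-bundle direction), and then defers the full verification to \cite{ChiodoNagel}. Your additional remark that the $g$-sectors may be viewed as ordinary Chen--Ruan sectors of the quotient by the larger group generated by $H$ and $g$ is consistent with how the paper handles the passage from $\id$ to general $g$ elsewhere (e.g.\ in the proof of Proposition \ref{pro:crc}), so no genuinely different argument is being made.
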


The following example is added here in the sake of clarity, 
but plays no essential role in the rest of the text; 
it illustrates in a simple way the issue arising for 
a Calabi--Yau embedded in a nonGorenstein $\PP(\pmb w)$.
\begin{exa}\label{exa:nonGorenstein}
Consider the hypersurface $\Sigma$ defined by $x^3+xy=0$ in $\PP(1,2)$. It consists of 
two points, the orbit $p=(-x^2,x)$ and the point $(x=0)=\PP(2)$. 
We have $I(\Sigma)=\Sigma_0\sqcup \PP(2)_1$ and $I(\PP(1,2))=\PP(1,2)_0\sqcup \PP(2)_1$, 
where the label $j=0,1$ denotes the root of unity $\exp(2\pi \cxi  j/2)$ 
acting as $(\la,\la^2)$ with nonempty fixed locus. The orbifold cohomology of $\Sigma$ is 
$\CC^3$ concentrated in degree $(0,0)$ (the age-shift does not intervene
for a $0$-dimensional stack). Since the 
Thom isomorphism holds after a $(-1)$-shift, we compare 
$\CC^3(-1)$ to $$H^*_{\id}(\PP(1,2),\PP(1,2)\setminus \Sigma)=
\CC^2(-1)\oplus H^*(\PP(2))(-\textstyle\frac12)\cong \CC^2(-1)\oplus \CC(-\textstyle\frac12).$$ 
On the other hand, we set $\FF$ via $P=x^3+xy$ as above; then $\CC^3(-1)$ matches  
$$H^*_{\id}(\LL,\FF)=
\CC^2(-1)\oplus H^*({\PP(2)})(-\textstyle 1)\cong \CC^3(-1).$$ 
We refer to \cite[Prop.~3.4]{ChiodoNagel}.
\end{exa}

\begin{rem}\label{rem:ambient} 
The ambient cohomology of $\Sigma_{P,H}$ is Poincar\'e dual to the image of the 
the homology of 
 $I_g(\Sigma_{P,H})$ within the homology of $I_g(\PP(\pmb w))$. By the identification above 
we may regard it also as 
the image of 
the morphism 
\begin{equation}\label{eq:restrictionforL}H^k_{g}(\LL_H, \FF_{P,H};\CC)\to H^k_{g}(\LL_H;\CC).\end{equation}
We can also consider the primitive cohomology 
of $I_g(\Sigma_{P,H})$ whose direct image vanish  
in $I_g(\PP(\pmb w))$. Than the kernel of the 
morphism \eqref{eq:restrictionforL} above matches 
the primitive cohomology of $\Sigma_{P,H}$ in 
$H^*(\Sigma_{P,H};\CC)(1)$
\begin{align*}
H_{g,\mathrm{amb}}^*(\Sigma_{W,H};\CC)(-1)&=\im\big(H^k_{g}(\LL_H, \FF_{P,H};\CC)\to H^k_{g}(\LL_H;\CC)\big),\\
H_{g,\mathrm{prim}}^*(\Sigma_{W,H};\CC)(-1)&=\ker\big(H^k_{g}(\LL_H, \FF_{P,H};\CC)\to H^k_{g}(\LL_H;\CC)\big).
\end{align*}
We now turn to the LG side, where 
the image of the 
analogue morphism allows us to describe the so called ``narrow'' and ``broad'' sectors. 
\end{rem}

\subsection{Jacobi ring}\label{sect:Jacobiring} The 
Jacobi ring $$\Jac P = dx_0\wedge\dots\wedge dx_n\CC[x_0,\dots, x_n]/ 
(\partial_0P, \dots, \partial_n P),$$
regarded as 
a $\CC$-vector space, has 
dimension $\prod_j \frac{d-w_j}{w_j}$ (due to the non-degeneracy of the polynomial $P$) and 
is isomorphic to $H^*(\CC^n,M_P;\CC)$.
The natural monodromy action of $\pmmu_d=\langle j\rangle$ 
from \eqref{eq:Milnor}, and more generally 
the action of any $\diag(\al_0,\dots, \al_n)\in \Aut_P$,
$$\diag(\al_0,\dots, \al_n)\cdot 
\left(\prod_{j=0}^n z_j^{b_j-1} \bigwedge_{j=0}^n dz_j\right)
= \prod_{j=0}^n \al _j^{b_j}  \left(\prod_{j=0}^n z_j^{b_j-1} 
\bigwedge_{j=0}^ndz_j\right),$$
allows us to write 
$$[\Jac(f)^{j_f}]_{p,q}=H^{p,q}(\VV;P^{-1}(t)),$$
where the subscript ${p,q}$ denotes the elements 
$$\left(\prod_{j=0}^n z_j^{b_j-1} 
\bigwedge_{j=0}^n dz_j\right)\quad \text{with \quad
$(p,q)=\left(n-\sum_j b_j\frac{w_j}d ,\sum_j b_j\frac{w_j}d \right)$}.$$
The above claim is due to Steenbrink \cite{Steenbrink}
in the present weighted homogenous setup, see also 
\cite[Appendix A]{ChiodoIritaniRuan}. 

\begin{rem}\label{rem:grading}
The action of $\Aut_P$ on $\Jac(P)$ is well defined because 
any automorphism $\diag(\al_0,\dots,\al_n)$ operates on 
each monomial in $\partial_i P$ by multiplication by $\al_i^{-1}$. 
This happens because $\diag(\al_0,\dots,\al_n)$ fixes 
$x_i\partial_iP$ since it fixes $P$. 

Furthermore, the grading $(p,q)$ is well defined 
simply because $\deg(x_i)=\frac{w_i}d$ yields 
a $\QQ$-grading on $\CC[x_0,\dots, x_n]$, 
which descends to a $\QQ$-grading of $\Jac(P)$ 
because the Jacobi ideal $(\partial_0P, \dots, \partial_n P)$ 
is homogeneous (each monomial in $\partial_i P$ 
has degree $d-\frac{w_i}d$).  
\end{rem}

This calls for the following definition.
\begin{defn}\label{defn:hcal}
For a quasi-homogenous polynomial $P$ of degree $d$ and weight $w_0,\dots,w_n$ and for any $H\subseteq \Aut_P$ containing $j_P$,
the $g$-orbifolded  
Landau--Ginzburg state space is 
$$\mathcal H_{P,H,g}=
\bigoplus_{h\in g H} \left(\Jac P_{h}\right)^H(-\age(h)),$$
where, for any diagonal symmetry $h\in H$, we consider 
the Jacobi ring 
$\Jac P_{h},$
where $P_{h}$ is the  restriction of 
$P$ to the ring of polynomials in the 
$h$-fixed variables. \end{defn}

\begin{rem}Notice that, as a consequence of the non-degeneracy of 
$P$, the restriction $P_g$ 
is still a non-degenerate polynomial.
\end{rem}
\begin{rem}
When $g$ is the identity 
we recover the FJRW state space $\mathcal H_{P,H}$.\end{rem}

\begin{rem}
We immediately have  
\begin{equation}\label{eq:statespace_alg_geom}
\mathcal H _{P,H,g}^{p,q} =H_{g}^{p,q}(\VV_H;\FF_{P,H}).\end{equation}\end{rem}
\begin{rem}\label{rem:narrow}
The elements $h$ for which no variables are fixed 
yield a summand $\Jac(P_{h})=\CC$; these are special elements in 
the FJRW state space; they span the subspace of the 
so called narrow classes. In FJRW theory, 
the remaining summands are referred to as 
broad classes. 
In complete analogy with  
ambient cohomology, we can identify the narrow and broad classes to 
the image and the kernel of 
the morphism 
$$H^k_{g}(\VV_H, \FF_{P,H};\CC)\to H^k_{g}(\VV_H;\CC).$$
We have
\begin{align*}
\Hcal_{P,H,g}^{*,\mathrm{nar}}=\im\big(H^k_{g}(\VV_H, \FF_{P,H};\CC\big)\to H^k_{g}(\VV_H;\CC)),\\
\Hcal_{P,H,g}^{*,\mathrm{broad}}=\ker\big(H^k_{g}(\VV_H, \FF_{P,H};\CC\big)\to H^k_{g}(\VV_H;\CC)).
\end{align*}
\end{rem}

\subsection{Landau--Ginzburg/Calabi--Yau correspondence}
The above equations \eqref{eq:LGCY_Kequiv}, \eqref{eq:OrbiThom}, and \eqref{eq:statespace_alg_geom} add up to a simple 
proof of the so called Landau--Ginzburg/Calabi--Yau correspondence based on Yasuda's theorem and $K$-equivalence (insured by the Calabi--Yau condition). 
\begin{thm}[\cite{ChiodoRuan, ChiodoNagel}]\label{thm:lgcy}
For any non-degenerate 
quasi-homogeneous polynomial $P$ of Calabi--Yau type,
for any group $H\subseteq \Aut_P$ containing $j_P$, and for any $g\in \Aut_P$,
we have 
$$\Phi\colon H_g^{p,q}(\Sigma_{P,H};\CC)(-1)
\xrightarrow{ \ \sim\  } \Hcal_{P,H,g}^{p,q}.$$
\end{thm}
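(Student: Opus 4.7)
The plan is to assemble the desired isomorphism $\Phi$ as a composition of the three identifications already established in the preceding subsections; no new geometric input should be required. Concretely, I would read the statement as the diagonal of the commutative triangle produced by chaining \eqref{eq:OrbiThom}, \eqref{eq:LGCY_Kequiv}, and \eqref{eq:statespace_alg_geom}, so the task reduces to checking that these three identifications can be concatenated in a way that respects the $g$-sector decomposition, the Hodge bigrading, and the age shift.

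First, I would apply the orbifold Thom isomorphism \eqref{eq:OrbiThom} of Chiodo--Nagel on the Calabi--Yau side. Since $\LL_H$ retracts onto the zero section $\Sigma_{P,H}$ with $\FF_{P,H}$ as the generic fibre of $P\colon\LL_H\to\CC$, this gives
\[H^{p,q}_{g}(\Sigma_{P,H};\CC)(-1)\cong H^{p,q}_{g}(\LL_H,\FF_{P,H};\CC),\]
sector by sector, with the $(-1)$-shift correctly absorbing the difference between the age shift on $\Sigma_{P,H}$ and that on $\LL_H$; Example \ref{exa:nonGorenstein} and the case analysis around \eqref{eq:OrbiThom} show this works even when $\PP(\pmb w_g)=\Sigma_{P,g}$. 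Second, I would invoke the $K$-equivalence \eqref{eq:LGCY_Kequiv} between the two GIT quotients $\LL_H$ and $\VV_H$ of the same total space: the Calabi--Yau condition \eqref{eq:CY} is precisely what forces the two canonical bundles to descend compatibly to the common coarse space, so Yasuda's theorem applies and yields a bidegree-preserving
\[H^{p,q}_{g}(\LL_H,\FF_{P,H};\CC)\cong H^{p,q}_{g}(\VV_H,\FF_{P,H};\CC).\]
Finally, I would use \eqref{eq:statespace_alg_geom}, that is, Steenbrink's description of the Jacobi ring as the weighted piece of the cohomology of the Milnor fibration, together with Definition \ref{defn:hcal} of $\Hcal_{P,H,g}$ as the $H$-invariants of the age-shifted Jacobi rings of the restricted polynomials $P_h$, to rewrite the last group as $\Hcal_{P,H,g}^{p,q}$. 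Composing gives $\Phi$.

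The main obstacle I expect is not conceptual but bookkeeping: the Yasuda isomorphism in the middle step is not canonical, so one has to pin it down before the three isomorphisms can be legitimately chained. I would follow the strategy sketched preceding \eqref{eq:LGCY_Kequiv}: choose the isomorphism so that it is compatible with the restriction to the common open $\FF_{P,H}\subseteq\LL^\times=\VV^\times$ and with the identification of fundamental classes of the matching sectors of $I_g(\VV_H)$ and $I_g(\LL_H)$ (both of bidegree $(\age,\age)$). A diagram chase on the long exact sequences of the pairs $(\VV_H,\FF_{P,H})$ and $(\LL_H,\FF_{P,H})$ then lifts the absolute Yasuda isomorphism to the relative one that is needed. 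Once this compatibility is in place, the remaining verifications are automatic: Thom respects the sectorwise age shifts by construction, and Steenbrink's identification respects the $\Aut_P$-grading as in Remark \ref{rem:grading}. The Calabi--Yau assumption enters only through $K$-equivalence; every other step is valid for arbitrary non-degenerate $P$, which is consistent with the fact that LG mirror symmetry itself holds without the Calabi--Yau hypothesis.
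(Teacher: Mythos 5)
Your proposal is correct and follows essentially the same route as the paper: the theorem is obtained precisely by chaining the orbifold Thom isomorphism \eqref{eq:OrbiThom}, the $K$-equivalence \eqref{eq:LGCY_Kequiv} between the two GIT quotients $\VV_H$ and $\LL_H$ (pinned down via compatibility with restriction to $\FF_{P,H}$ and the five-lemma on the long exact sequences of the pairs), and the Steenbrink identification \eqref{eq:statespace_alg_geom}. Your remarks on where the Calabi--Yau hypothesis enters and on normalising the non-canonical Yasuda isomorphism match the discussion in \S\ref{subsect:Kequivalence}.
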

Since the above isomorphism follows from 
$K$-equivalence, it is not explicitly given. 
In  \cite{ChiodoRuan} we provide an explicit automorphism. 
In \cite{ChiodoNagel} we generalize it to complete 
intersections.
Notice that, by a slight abuse of notation, we adopted the same notation for 
the above isomorphism as for $\Phi\colon H_g^*(\VV;\CC)\to H_g^*(\LL;\CC)$ from 
\eqref{eq:Yasuda}.

%
%

\section{Unprojected mirror symmetry} \label{sect:unprojected}
\subsection{Mirror duality}
The mirror construction due to Berglund and H\"ubsch \cite{BH} is elementary. 
It applies to non-degenerate polynomials $P$ of invertible type, \emph{i.e.}
having as many monomials as variables. Up to rescaling the variables 
these polynomials are entirely encoded by the 
exponent matrix $E=(m_{i,j})$, and are paired to a second polynomial of the same type
\[P(x_0,\dots, x_n)=\sum_{i=0}^n \prod_{j=0}^n x_j^{m_{i,j}},\qquad 
P^\vee(x_0,\dots, x_n)=\sum_{i=0}^n\prod_{j=0}^n  x_j^{m_{j,i}},\]
by transposing the matrix of exponents $E$. 

\begin{rem}
The square matrix $E$ is invertible 
because the vector $(w_i/d)_{i=0}^n$ is uniquely determined as a 
consequence of the 
non-degeneracy condition.
The inverse matrix $E^{-1}=(m^{i,j})$ allows a simple description of $\Aut_P$: 
the columns express the symmetries $$\rho_j=[m^{0,j},m^{2,j},\dots,m^{n,j}]$$ spanning 
$\Aut_P$. It is also easy to see that 
the columns of $E$ express all the relations $\sum_i m_{i,j} \rho_i$
among these generators. Naturally, the rows of $E^{-1}$ provide an expression 
for the symmetries $\ol \rho_i$ generating $\Aut_{P^\vee}$ under the relations 
provided by the rows $\sum_j m_{i,j} \ol \rho_j$ of $E$. 
In particular we have a canonical isomorphism 
$$\Aut_{P^\vee} = (\Aut_P)^*,$$
where $(G)^*$ denotes the Cartier dual $\Hom(G,\GG_m)$. 
The identification matches the symmetry $[q_0,\dots, q_n]$ to 
the homomorphism mapping $\rho_i$ to  
$\exp(2\pi \cxi q_i) \in \QQ/\ZZ$. 
Based on this identification, for any subset $S\subseteq \Aut_P$, 
we set $S^\vee \subseteq \Aut_{P^\vee}$ as follows 
$$S^\vee= \{\fie \in (\Aut_P)^*\ \mid \ \fie\!\mid_S=0\}.$$
 This is a duality exchanging subgroups of $\Aut_P$ and 
subgroups of $\Aut_{P^\vee}$; for any group $H\subseteq \Aut_P$,
we can write 
\begin{equation}\label{eq:groupduality}
H^\vee=\ker \left(\Aut_{P}\twoheadrightarrow \Hom(H;\GG_m)\right).\end{equation}
It exchanges 
$J_P$ with $\SL_{P^\vee}$.
It reverses the inclusions. 
\end{rem}

We define the unprojected state space 
$$U_P=\bigoplus_{h\in \Aut_P} \Jac(P_h)(-\age(h))$$
by summing over all diagonal symmetries and without taking any invariant. 
For each summand $\Jac(P_h)$ 
there exists an $\Aut_{P^\vee}$-grading 
defined by 
\begin{align}
\ell_h\colon \Jac(P_h)&\to \Aut_{P^\vee}\nonumber\\
 \prod_j x_j^{b_j-1}\bigwedge_{j} dx_j&\mapsto 
\prod_j \ol \rho_j^{b_j},\label{eq:ellhmap}
\end{align}
where all the products run over the set $F_h$ of labels 
of variables fixed by $h$.
The map is well defined because each monomial in $\partial_iP_h$ 
maps to $J_{P^\vee}(\ol \rho_i)^{-1}$. This happens because each monomial appearing
in $\partial_iP_h dx_i$ 
maps to the same automorphism as each monomial appearing in $P$.
Furthermore, the automorphism obtained in this way is the identity
by the relation $\sum_j m_{i,j} \ol \rho_j$
discussed above. 
We can finally conclude that $\partial_iP_h \bigwedge_j dx_j$ 
maps to the same automorphism as $\bigwedge_{j\neq i} dx_j$: namely 
$\prod_{j\neq i} \ol \rho_j=
J_{P^\vee}(\ol \rho_i)^{-1}$.
In this way the unprojected state space admits a double decomposition
\begin{equation}\label{eq:doubledecomp}U_P=\bigoplus_{h\in \Aut_P} \Jac(P_h)(-\age(h))=
\bigoplus_{h\in \Aut_P}\ \bigoplus_{k\in \Aut_{P^\vee}} 
U_h^k(P),\end{equation}
where $U_h^k(P)$ is the $k$-graded component of $\Jac(P_h)(-\age(h))$.
We write 
$$U_H^K(P)=\bigoplus_{h\in H}\ \bigoplus_{k\in K} 
U_h^k(P),$$
for any set $H\subseteq \Aut_P$ and $K\subseteq \Aut_{P^\vee}$.
When a subscript $H$ or a supscript $K$ is omitted we assume 
that $H$ or $K$ equal $\Aut_P$ or $\Aut_{P^\vee}$.
When no ambiguity may occur, we omit the polynomial $P$ in the notation.
\begin{pro}\label{pro:invariants}
The vector space 
$U_H^K$ is the $K^\vee$-invariant subspace of $U_H$
$$U_H^K(P)=\left[U_H(P)\right]^{K^\vee}.$$
In particular we have 
$$\Hcal_{P,H,g}= U_{gH}^{H^\vee}(P).$$
\end{pro}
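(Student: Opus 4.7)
The plan is to reduce both claims to a single identity: the diagonal action of $\Aut_P$ on each Jacobian summand $\Jac(P_h)$ diagonalises with eigenvalues prescribed by the $\Aut_{P^\vee}$-grading $\ell_h$, via the canonical Cartier pairing. Once this is established, the proposition follows from Pontryagin reciprocity for finite abelian groups.

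First I would compute the eigenvalues of the action explicitly. For $\al = [a_0, \ldots, a_n] \in \Aut_P$ and a basis class $\xi = \prod_{j\in F_h} x_j^{b_j - 1} \bigwedge_{j\in F_h} dx_j \in \Jac(P_h)$, the diagonal action gives $\al \cdot \xi = \prod_{j \in F_h} \exp(2\pi\cxi\, a_j b_j)\, \xi$, while the $\Aut_{P^\vee}$-grading assigns $\ell_h(\xi) = \prod_{j \in F_h} \bar\rho_j^{b_j}$. Using that the generators $\rho_i$ of $\Aut_P$ are the columns of $E^{-1}$ while the $\bar\rho_j$ of $\Aut_{P^\vee}$ are the rows, a bookkeeping argument shows that under the canonical pairing $\langle -, -\rangle\colon \Aut_P \times \Aut_{P^\vee} \to \QQ/\ZZ$ one has $\langle \al, \bar\rho_j\rangle = a_j$. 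Hence the key identity
\[ \al \cdot \xi = \exp\bigl(2\pi\cxi\, \langle \al, \ell_h(\xi)\rangle\bigr)\, \xi \]
holds on each $\Jac(P_h)$.

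From this, the subspace $U_h^k$ is precisely the $k$-isotypic component of $\Jac(P_h)$ under the $\Aut_P$-action on the right of the pairing. For the first claim, observe that $K^\vee \subseteq \Aut_P$ is by definition the annihilator of $K$, and by Pontryagin double-duality $(K^\vee)^\perp = K$ inside $\Aut_{P^\vee}$. The eigenvalue formula then shows that $\xi$ is $K^\vee$-invariant iff $\ell_h(\xi) \in K$, so $\bigoplus_{k\in K} U_h^k = [\Jac(P_h)]^{K^\vee}$; summing over $h\in H$ yields $U_H^K = [U_H]^{K^\vee}$. For the second claim, apply the same decomposition with $H \subseteq \Aut_P$ as the acting group: $\xi$ is $H$-invariant iff $\ell_h(\xi)$ annihilates $H$, i.e.\ $\ell_h(\xi) \in H^\vee \subseteq \Aut_{P^\vee}$. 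Thus $[\Jac(P_h)]^H = U_h^{H^\vee}$, and summing over $h\in gH$ and recalling Definition \ref{defn:hcal} gives $\Hcal_{P,H,g} = U_{gH}^{H^\vee}$.

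The only real obstacle is the eigenvalue identity, which amounts to tracking the canonical isomorphism $\Aut_{P^\vee} \cong (\Aut_P)^*$ through the generators drawn from the rows and columns of $E^{-1}$. This is essentially the same computation already used to check that $\ell_h$ in \eqref{eq:ellhmap} is well-defined, and it should not present conceptual difficulty beyond careful indexing.
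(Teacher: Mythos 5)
Your proposal is correct and follows essentially the same route as the paper: the paper's proof is the one-line observation that $\ell_h(f)\in K$ if and only if $f$ is $K^\vee$-invariant, ``by definition of $K^\vee$''. You simply make explicit the eigenvalue identity $\al\cdot\xi=\exp\bigl(2\pi\cxi\,\langle\al,\ell_h(\xi)\rangle\bigr)\xi$ underlying that observation, which is a faithful (and slightly more careful) elaboration of the same argument.
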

\begin{proof}
This happens because, for any form $f$ in $\Jac(P_h)(-\age(h))\subseteq U_P$
the following equivalence holds. We have $\ell_h(f)\in K$ if and only if 
$f$ is invariant with respect to $K^\vee$. This is just another way to phrase 
the definition of $K^\vee$. 
\end{proof}

\begin{thm}[Krawitz \cite{Krawitz}, Borisov \cite{Borisov}] 
For any $h\in \Aut_P$ and $k\in \Aut_{P^\vee}$, 
we have an explicit isomorphism 
$$U_h^k(P)\cong U_k^h(P^\vee),$$
yielding an explicit isomorphism 
\begin{align*}
M_P\colon U_P &\longrightarrow U_{P^\vee}
\end{align*}
mapping $(p,q)$-classes to $(n+1-p,q)$ classes. 
\end{thm}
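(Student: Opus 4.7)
The plan is to establish the isomorphism summand by summand: proving
\[
U_h^k(P) \cong U_k^h(P^\vee)
\]
for each pair $(h,k) \in \Aut_P \times \Aut_{P^\vee}$ and then assembling via the double decomposition \eqref{eq:doubledecomp} yields the global map $M_P$. For a fixed $h$, a basis of $\Jac(P_h)$ consists of monomial forms $\omega_h(\mathbf{b}) = \prod_{j \in F_h} x_j^{b_j-1} \bigwedge_{j \in F_h} dx_j$, where $F_h$ is the set of $h$-fixed variables and $\mathbf{b}$ ranges over a finite admissible index set determined by the restricted exponent submatrix $E_h$. By \eqref{eq:ellhmap} its $\ell_h$-grading is $\prod_{j} \ol{\rho}_j^{b_j}$, so $U_h^k(P)$ is spanned by those $\omega_h(\mathbf{b})$ whose product equals $k$.

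The heart of the argument is Krawitz's explicit bijection. Writing $h = \prod_i \rho_i^{a_i}$ via the columns of $E^{-1}$, I send $(h, \omega_h(\mathbf{b}))$ to $(k, \omega^\vee_k(\mathbf{c}))$ in $U_k^h(P^\vee)$, where $k = \ell_h(\omega_h(\mathbf{b}))$ and the mirror exponents $\mathbf{c} = (c_i)_{i \in F_k^\vee}$ are obtained by restricting the integer tuple $(a_i)$ to the indices fixed by $k$ acting on $P^\vee$. By construction this rule is symmetric: the analogous assignment with $P$ and $P^\vee$ swapped returns $(h, \omega_h(\mathbf{b}))$, since the columns of $E^{-1}$ used to encode $\Aut_P$ are precisely the rows of $(E^T)^{-1}$ used to encode $\Aut_{P^\vee}$. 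Well-definedness at the level of $\Jac$ follows because the relations $\partial_i P_h$ are encoded by the rows of $E_h$, and these rows become columns of the mirror exponent matrix that express vanishings $\partial_i P^\vee_k$ on the mirror side.

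The bigrading check closes the argument. I need $(p, q)$ of $\omega_h(\mathbf{b})$, computed in $\Jac(P_h)(-\age(h))$, to match $(n+1-p, q)$ of $\omega^\vee_k(\mathbf{c})$ in $\Jac(P^\vee_k)(-\age(k))$. The $q$-grading preservation reduces to the identity
\[
\sum_{j \in F_h} b_j \frac{w_j}{d} + \age(h) = \sum_{i \in F_k^\vee} c_i \frac{w_i^\vee}{d^\vee} + \age(k),
\]
which follows from the duality $E \leftrightarrow E^T$ between weight data on the two sides, once the fractional shifts built into $\age(h)$ and $\age(k)$ are incorporated. The $p$-reversal $p \mapsto n+1-p$ then comes from combining this identity with the classical age-sum formula $\age(h) + \age(h^{-1}) = n+1 - |F_h|$ and its mirror analogue.

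The main obstacle lies in this bigrading verification: $\age(h)$ is a fractional quantity built from the coordinates $[q_0,\dots,q_n]$ of $h$, whereas the mirror exponents $\mathbf{c}$ come from the \emph{integer} tuple $(a_i)$ lifting $h$ via $E^{-1}$, and reconciling the two requires careful control of the representatives chosen in $[0,1)$. The cleanest route is the standard reduction of an invertible polynomial to a sum of Fermat, chain, and loop atoms, so that the exponent matrix $E$ becomes block-diagonal: on each atomic block the bijection and bigrading identities become elementary, and taking direct sums assembles into the full statement $p + p^\vee = n+1$ and $q = q^\vee$.
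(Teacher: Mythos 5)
The paper never proves this theorem: it is imported from Krawitz and Borisov, and the only proof content supplied in the text is the worked atom $P=x^k$ of Example~\ref{exa:xk} together with the Fermat-type description of Remark~\ref{rem:Fermat}, where the map really is the naive exchange of the exponent vectors $\pmb a$ and $\pmb b$. Your proposal therefore has to be measured against Krawitz's original argument, whose architecture you reproduce correctly: decompose $U_P$ into the bigraded pieces $U_h^k$, define the map on monomial generators by swapping the exponents of the form with the exponents expressing $h$ in the generators $\rho_i$, and reduce to atomic types.

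The genuine gap is the sentence claiming that ``on each atomic block the bijection and bigrading identities become elementary.'' For Fermat atoms this is true (it is exactly Example~\ref{exa:xk}); for loops it is still manageable; but for chain atoms $x_1^{a_1}x_2+\dots+x_{n}^{a_n}$ the naive exponent-swapping recipe is not even well defined on all of $\Jac(P_h)$: certain basis monomials are sent to data whose exponents fall outside the admissible range for the mirror Milnor ring, or land in a sector $\Jac(P^\vee_k)$ whose prescribed variables are not all fixed by $k$. Identifying these exceptional elements and re-pairing them by hand, together with the dimension count that makes the re-pairing possible, is the bulk of Krawitz's proof, so declaring the atomic cases elementary hides precisely the part of the theorem that requires work. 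Two smaller points: the atoms assemble by tensor product ($M_P=M_{P'}\otimes M_{P''}$, the Thom--Sebastiani property), not by direct sum; and in the bigrading check the offset between the monomial $\prod_j x_j^{b_j-1}$ and the group element $\prod_j\ol\rho_j^{\,b_j}$ must be made explicit before the age-sum identity $\age(h)+\age(h^{-1})=\#\{j : h\cdot x_j\neq x_j\}$ can be applied, since as written the two sides of your $p$-reversal computation do not visibly refer to the same group element.
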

We illustrate the isomorphism explicitly in the special case where 
$P=x^k$. It is elementary and it  plays a crucial role in this paper. 
\begin{exa}\label{exa:xk}
Let $P=x^k$. Then $\Aut_P$ equals $\ZZ_k$ (because we fix a primitive $k$th root $\xi$). 
For $1\in \pmmu_k=\ZZ_k$, we have $P_1=P$, so $$\Jac(P_1)=dx\CC[x]/(x^{k-1})= \sum_{h=1}^{k-1} U_{1}^{\xi^h}$$
with $U_{1}^{\xi^h}$ spanned by $x^{h-1}dx$.
Furthermore, for $i\neq 0$, we have $P_{\xi^i}=0$, so $$\Jac(P_{x^i})=\CC=U_{\xi^i}^1\ \ \text{(for $i=1,\dots, k-1$)}.$$ By mapping $x^{i-1}dx\in \Jac(P_1)$ to the 
generator $1_{\xi^i}$ of $\Jac(P_{x^i})$ we have defined a map matching $(1-\frac{i}k,\frac{i}k)$-classes 
to  $(\frac{i}k,\frac{i}k)$-classes.
\end{exa}

If $P$ is a polynomial which 
can be expressed as the sum of two invertible and non-degenerate 
polynomials $P'$ and $P''$ involving disjoint sets of variables we clearly have 
$\Aut_P=\Aut_{P''}\times \Aut_{P''}$. This and the  theorem  above
imply the following crucial properties of the mirror map $M_P$. 
\begin{description}
\item[Thom--Sebastiani.]
If $P$ is a polynomial which 
can be expressed as the sum of two invertible and non-degenerate 
polynomials 
$$P=P'(x'_0,\dots,x'_{n_1})+P''(x''_0,\dots,x''_{n_2})$$ 
involving two disjoint sets of variables,
then we have $$M_P=M_{P'}\otimes M_{P''}.$$
\item[Group actions.]
For any $H, K \subseteq \Aut_P$, the restriction of $M_P$
yields an isomorphism  
\begin{equation}M_P\colon  U_H(P)^K \cong U_{H^\vee}(P^\vee)^{K^\vee}.
\end{equation}
\end{description}
This mirror construction is due to Berglund and H\"ubsch \cite{BH}. The result presented here 
appeared first in this form in Krawitz \cite{Krawitz}; 
we should also refer  to Berglund and Henningson \cite{BerglundHenningson} for 
the group duality, to Kreuzer and Skarke \cite{KreSka} for a systematic study,
and to Borisov \cite{Borisov} for a  reinterpretation 
of the setup and further generalizations in terms of vertex algebr\ae.

There are many consequences of the existence of $M_P$ and of its 
properties with respect to group actions. We list a few of them, starting from 
the first, most transparent, application. 
It appeared in \cite{ChiodoRuan} and it should be regarded as 
a combination of the mirror map $M_P$ of Krawitz and Borisov \cite{Krawitz, Borisov} and of 
the LG/CY isomorphism $\Phi$ of the first named author with Ruan \cite{ChiodoRuan}. In the present setup, it is extremely elementary to prove its main 
statement.
\begin{thm}[Mirror Symmetry for CY models]\label{thm:MSCY}
For any 
invertible, non-degenerate $P$ of Calabi--Yau type and 
for any $H\subseteq\Aut_P$ satisfying $j_P\in H\subseteq \SL_P$, we have
an isomorphism 
$$H_{\id}^{p,q}(\Sigma_{P,H};\CC)\cong H_{\id}^{n-1-p,q}(\Sigma_{P^\vee,H^\vee};\CC) \qquad (p,q\in \QQ).$$
\end{thm}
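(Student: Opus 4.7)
The plan is to chain three ingredients already established in the paper: the LG/CY correspondence (Theorem~\ref{thm:lgcy}), the identification of the FJRW state space with an invariant subspace of the unprojected state space (Proposition~\ref{pro:invariants}), and the Krawitz--Borisov mirror map $M_P$ in its equivariant form. Since $\langle j_P\rangle \subseteq H \subseteq \SL_P$, and the duality exchanges $\langle j_P\rangle$ with $\SL_{P^\vee}$ while reversing inclusions, the mirror group automatically satisfies $\langle j_{P^\vee}\rangle \subseteq H^\vee \subseteq \SL_{P^\vee}$, so the hypotheses needed on the mirror side are in place.

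First, applying Theorem~\ref{thm:lgcy} with $g=\id$ on both sides translates the desired statement into the LG world:
\[
H_{\id}^{p,q}(\Sigma_{P,H};\CC) \cong \Hcal_{P,H,\id}^{p+1,q+1}, \qquad H_{\id}^{n-1-p,q}(\Sigma_{P^\vee,H^\vee};\CC) \cong \Hcal_{P^\vee,H^\vee,\id}^{n-p,q+1},
\]
where the shift by $+1$ on each Hodge coordinate is forced by the $(-1)$-shift appearing in the LG/CY isomorphism. This reduces the problem to producing a bidegree-preserving isomorphism $\Hcal_{P,H,\id}^{p+1,q+1} \cong \Hcal_{P^\vee,H^\vee,\id}^{n-p,q+1}$.

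Second, by Proposition~\ref{pro:invariants} we have $\Hcal_{P,H,\id} = U_H^{H^\vee}(P) = [U_H(P)]^{(H^\vee)^\vee} = [U_H(P)]^{H}$, using that Cartier duality is an involution on finite abelian groups, and symmetrically $\Hcal_{P^\vee,H^\vee,\id} = [U_{H^\vee}(P^\vee)]^{H^\vee}$. The equivariant form of the Krawitz--Borisov theorem supplies, for any $K\subseteq \Aut_P$, an isomorphism $M_P\colon [U_H(P)]^K \cong [U_{H^\vee}(P^\vee)]^{K^\vee}$; taking $K=H$ yields exactly $M_P\colon \Hcal_{P,H,\id} \xrightarrow{\sim} \Hcal_{P^\vee,H^\vee,\id}$.

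Finally, since $M_P$ sends $(a,b)$-classes to $(n+1-a,b)$-classes, it carries $\Hcal_{P,H,\id}^{p+1,q+1}$ to $\Hcal_{P^\vee,H^\vee,\id}^{n-p,q+1}$, precisely matching the target dictated by the LG/CY reformulation above. There is no serious obstacle in the argument: the only real bookkeeping is this bidegree check, and each step is merely the restriction of a map whose properties have been recorded in earlier sections. This clean three-step composition is moreover the template on which the more delicate equivariant statements (Theorems~A, B, C) will be built, where the same chain must be run while keeping track of the action of the extra automorphism~$s$.
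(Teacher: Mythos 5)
Your proposal is correct and follows essentially the same route as the paper: the paper's own proof likewise combines the group-action property of the Krawitz--Borisov map $M_P$ (giving $M_P(\Hcal_{P,H,\id})=\Hcal_{P^\vee,H^\vee,\id}$) with the LG/CY correspondence of Theorem~\ref{thm:lgcy}, after checking that $j_P\in H\subseteq \SL_P$ dualizes to $j_{P^\vee}\in H^\vee\subseteq \SL_{P^\vee}$. Your version merely makes explicit the intermediate use of Proposition~\ref{pro:invariants} and the bidegree bookkeeping, both of which check out.
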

\begin{proof}
Since $M_P$ satisfies the above property with respect to group actions we have 
$M_P(\Hcal_{P,H,\id})=\Hcal_{P^\vee,H^\vee,\id}$. In order to apply Theorem \ref{thm:lgcy}
we need the Calabi--Yau condition on $P$ and the conditions $H\ni j_P$ and $H^\vee \ni j_{P^\vee}$. The last equation is equivalent to 
$H^\vee\subseteq\SL_P$. The claim follows. 
\end{proof}
The Thom--Sebastiani property 
applies to $P'=x_0^k$ and $P''=f$ adding up to $$W=x_0^k+f(x_1,\dots, x_n).$$ 
The aim of this paper is to study the relation between the above
cohomological mirror symmetry and the the symmetry $s=[\frac1k,0,\dots,0]$.
\begin{pro}\label{pro:movingbecomefixed}
Let $(\phi,h)$ be a monomial element
$$(\phi,h)=\left(\prod_{j=0}^n z_j^{b_j-1} 
\bigwedge_{j=0}^n dz_j\right)$$
 in $\Jac(P^\vee_h)(-\age(h))$. Let $\xi=\exp(2\pi\cxi /k)$. Then, 
$s^*(\phi,h)=\xi^i(\phi,h)$ if and only if $M_W(\phi,h)$ is of the form 
$(\phi',h')$ with $h'=[\frac{i}k,a_1,\dots,a_n]$. In particular 
$M_W$ maps invariant elements to non-invariant elements.
\end{pro}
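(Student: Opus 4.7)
The plan is to reduce to an explicit computation on $x_0^k$ via the Thom--Sebastiani factorisation of the mirror map, which is available since $W=x_0^k+f(x_1,\dots,x_n)$ splits as a sum of invertible, non-degenerate polynomials in disjoint variables. Writing $M_W=M_{x_0^k}\otimes M_f$, every monomial element of $U(W^\vee)$ decomposes as $(\phi,h)=(\phi_0,h_0)\otimes(\wt\phi,\wt h)$, with $(\phi_0,h_0)\in U((x_0^k)^\vee)=U(x_0^k)$ and $(\wt\phi,\wt h)\in U(f^\vee)$. Since $s=[\tfrac{1}{k},0,\dots,0]$ acts only on $x_0$, the action on $U(W^\vee)$ splits as $s^*\otimes\id$, and the $s^*$-eigenvalue of $(\phi,h)$ equals the $s^*$-eigenvalue of $(\phi_0,h_0)$.

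The computation on the $x_0^k$ factor is provided by Example \ref{exa:xk}. When $h_0=1$ (so $x_0\in F_h$), the element is $(x_0^{b_0-1}dx_0,1)$ with $1\le b_0\le k-1$; the diagonal action on $dx_0$ gives $s^*$-eigenvalue $\xi^{b_0}$, while the mirror map sends it to $(1,\xi^{b_0})$ supported at $\xi^{b_0}\in\Aut_{x_0^k}$, which has first coordinate $\tfrac{b_0}{k}$. When $h_0=\xi^{a_0}$ with $a_0\neq 0$, the form does not involve $dx_0$, so $s^*$ acts with eigenvalue $\xi^0=1$, while the mirror map sends it to $(x_0^{a_0-1}dx_0,1)$ supported at the identity, which has first coordinate $0$. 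In both cases, the $s^*$-eigenvalue equals $\xi^i$ if and only if $M_{x_0^k}(\phi_0,h_0)$ is supported at a symmetry with first coordinate $\tfrac{i}{k}$.

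Now $M_f(\wt\phi,\wt h)$ is supported at an element of $\Aut_f\subset\Aut_W$, which has vanishing first coordinate, so the first coordinate of the label $h'$ of $M_W(\phi,h)$ equals that of $M_{x_0^k}(\phi_0,h_0)$. Combined with the preceding paragraph, this yields the main equivalence. The ``invariant to non-invariant'' consequence is then immediate: if $(\phi,h)$ is $s^*$-invariant, then $i=0$ forces $h_0=\xi^{a_0}$ with $a_0\neq 0$, so $h$ itself has nonzero first coordinate $\tfrac{a_0}{k}$; applying the same proposition to $M_W(\phi,h)$ through the inverse mirror $M_{W^\vee}$ then identifies its $s^*$-eigenvalue with $\xi^{a_0}\neq 1$. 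The main obstacle is really bookkeeping: aligning the diagonal action of $s\in\Aut_W$, the mirror map $M_W$, and the $\ell_h$-grading of the unprojected state space. Once these conventions are matched and Thom--Sebastiani is invoked, both directions reduce to the two-case calculation on $x_0^k$ above.
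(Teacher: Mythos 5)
Your proposal is correct and follows essentially the same route as the paper's own (very terse) proof: both reduce via the Thom--Sebastiani factorisation $M_W=M_{x_0^k}\otimes M_f$ to the explicit two-case computation on the $x_0^k$ factor given in Example \ref{exa:xk}. Your write-up simply fills in the bookkeeping that the paper leaves implicit.
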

\begin{proof} This happens because $s$ spans $\Aut_{x_0^k}$, whose dual group is 
trivial.  The claim follows by $M_{W}=M_{x_0^k}\otimes M_f$ (see Example \ref{exa:xk}).
\end{proof}

\subsection{Unprojected states and automorphisms} 
We study the behaviour of $U_W$ with respect to $s$. 
We begin by restricting to a conveniently large state space $\HH_W$ within $U_W$.
 
Let us consider 
$W=x_0^k+f(x_1,\dots,x_n)$ and, as in \S \ref{sec:polyaut}, a
subgroup $K \subset \Aut_f$ satisfying 
$$(j_f)^k\in K\subseteq \SL_f.$$ 
Set 
$$\HH_{K[j_W,s]}^K(W)=\left[U_{K[j_W,s]}(W)\right]^K=\bigoplus_{h\in K[j_W,s]} \Jac(P_h)^K(-\age(h)).$$
If no ambiguity arises, when the polynomial $W$ and the group $K$ are fixed, we write simply $\HH$. 

In the above setup we have three groups: $K$, $K[j_W]$ and $K[j_W,s]$. Only $K[j_W]$ satisfies the 
two conditions of mirror symmetry theorems: namely, it contains $j_W$ and is contained in $\SL_W$.
Its mirror group $K[j_W]^\vee$ has the same properties. 
The following 
proposition (of immediate proof) describes how $K$ and $K[j_W,s]$ behave with respect to 
the the group duality.
\begin{pro}\label{pro:MS}
Consider $K\in \Aut_f$ satisfying 
$(j_f)^k\in K\subseteq \SL_f.$ Then we have $$(j_{f^\vee})^k\in (K[j_W,s])^\vee\subseteq \SL_{f^\vee}\quad\text{and}\quad K^\vee=(K[j_W,s])^\vee[j_{W^\vee},s].$$
Furthermore, we have a mirror isomorphism 
\begin{equation}\label{eq:unprojected_mirror}
M_W\colon \left(\Hbar_{K[j_W,s]}^K(W)\right)^{p,q}=
\left(\Hbar_{K^\vee}^{K[j_W,s]^\vee}(W^\vee)\right)^{n+1-p,q}. \end{equation}
\qed
\end{pro}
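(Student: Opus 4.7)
The approach is to exploit the block structure $W = x_0^k + f$: this induces factorizations $\Aut_W = \pmmu_k \times \Aut_f$ and $\Aut_{W^\vee} = \pmmu_k \times \Aut_{f^\vee}$ under which the Berglund--H\"ubsch pairing splits as $\langle (s^a, g), (s^{a'}, g')\rangle = aa'/k + \langle g, g'\rangle_f$, and evaluating against $j_P$ or $j_{P^\vee}$ recovers the age modulo $\ZZ$ (consequence of $\SL_P = \langle j_{P^\vee}\rangle^\vee$). I would first observe that $K[j_W, s] = \pmmu_k \times K[j_f]$, where $K[j_f] = \langle K, j_f\rangle \subseteq \Aut_f$ (this uses $s \in K[j_W,s]$ and $s^{-1}j_W = j_f$), which immediately gives $(K[j_W, s])^\vee = \{0\} \times (K[j_f])^\vee_f$ inside $\Aut_{W^\vee}$.

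For the first assertion, I would pair $(j_{f^\vee})^k$ against each generator of $K[j_W, s]$: the pairing with $s$ vanishes by orthogonality of the two factors; for $g \in K \subseteq \SL_f$ it reduces to $k\,\age(g) \in \ZZ$; and for $j_f$ it reduces to $k\,\age(j_f) \in \ZZ$, since $j_f^k \in K \subseteq \SL_f$. The inclusion $(K[j_W, s])^\vee \subseteq \SL_{f^\vee}$ is then automatic: any element of the dual pairs trivially with $j_f \in K[j_f]$, hence lies in $\langle j_f\rangle^\vee_f = \SL_{f^\vee}$.

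For the equality $K^\vee = (K[j_W, s])^\vee[j_{W^\vee}, s]$, the block-wise calculation gives $K^\vee = \pmmu_k \times K^\vee_f$ (nothing constrains the $\pmmu_k$-factor since $K$ has trivial projection there), while absorbing $s$ and $j_{W^\vee} = s \cdot j_{f^\vee}$ into $\{0\} \times (K[j_f])^\vee_f$ shows the right-hand side equals $\pmmu_k \times (K[j_f])^\vee_f \cdot \langle j_{f^\vee}\rangle$. The claim reduces to $(K[j_f])^\vee_f \cdot \langle j_{f^\vee}\rangle = K^\vee_f$. The cokernel $K^\vee_f/(K[j_f])^\vee_f$ is Cartier dual to $K[j_f]/K$, cyclic of order equal to the smallest $r$ with $j_f^r \in K$; since $K \subseteq \SL_f$ forces $r\,\age(j_f) \in \ZZ$, and since $\age(j_W) = \age(s) + \age(j_f) \in \ZZ$ pins $\age(j_f) \equiv -1/k \bmod 1$ to have order exactly $k$ in $\QQ/\ZZ$, we get $r=k$. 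The image of $j_{f^\vee}$ in this cyclic quotient of order $k$ corresponds via Cartier duality to the character $j_f \mapsto e^{-2\pi i/k}$, a primitive $k$-th root of unity, so $j_{f^\vee}$ generates the cokernel and equality holds.

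For the mirror isomorphism, Proposition~\ref{pro:invariants} recasts $\Hbar_{K[j_W, s]}^K(W) = U_{K[j_W, s]}^{K^\vee}(W)$. The Krawitz--Borisov map sends $U_h^k(W)$ to $U_k^h(W^\vee)$ shifting $(p,q)$ to $(n+1-p,q)$, so summing over indices yields $U_{K[j_W, s]}^{K^\vee}(W) \cong U_{K^\vee}^{K[j_W, s]}(W^\vee) = \Hbar_{K^\vee}^{K[j_W,s]^\vee}(W^\vee)$ by another application of Proposition~\ref{pro:invariants}. The main obstacle is the cokernel argument in step three: pinning down the order of $\age(j_f) \bmod 1$ via the Calabi--Yau condition, and verifying that the class of $j_{f^\vee}$ generates the full cyclic Cartier dual (rather than a proper subgroup), is the essential arithmetic input without which only containment would follow.
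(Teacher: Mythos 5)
Your proof is correct. The paper itself supplies no argument here: the proposition is introduced as being ``of immediate proof'' and is stated with a \qed, so there is nothing to compare against line by line. Your write-up is the natural way to fill in the details: the blockwise splitting $\Aut_W=\pmmu_k\times\Aut_f$, $\Aut_{W^\vee}=\pmmu_k\times\Aut_{f^\vee}$ of the duality pairing, the identity $\langle j_{P^\vee},g\rangle=\det(g)$ (equivalent to the paper's remark that duality exchanges $\langle j_P\rangle$ with $\SL_{P^\vee}$), and Proposition \ref{pro:invariants} combined with the Krawitz--Borisov theorem for \eqref{eq:unprojected_mirror}. You are also right to flag the one genuinely non-formal step: the equality $(K[j_f])^\vee\cdot\langle j_{f^\vee}\rangle=K^\vee$ inside $\Aut_{f^\vee}$ requires that the order of $j_f$ modulo $K$ equal the order of $\det(j_f)$ in $\GG_m$, and this is exactly where $\age(j_f)\equiv -1/k \bmod \ZZ$ enters; that congruence is not a formal consequence of $(j_f)^k\in K\subseteq\SL_f$ but follows from $\det(j_W)=1$, i.e.\ from the Calabi--Yau-type assumption \eqref{eq:CY} standing in \S\ref{sec:polyaut} (equivalently $j_W\in\SL_W$, as required of $H=K[j_W]$ throughout \S\ref{sect:unprojected}). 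Without that input only the containment $(K[j_W,s])^\vee[j_{W^\vee},s]\subseteq K^\vee$ would follow, as you note, so your proof makes explicit a hypothesis the paper leaves implicit in the surrounding setup.
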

The unprojected state space projects to the sum of state spaces of the form 
$\Hcal_{W,H,g}$
after taking $j_W$-invariant elements.
\begin{pro}\label{pro:jinv}
We have 
$$\HH^{j_W}=\bigoplus_{b=0}^{k-1} \Hcal_{W,K[j_W],s^b}.$$
In particular, if $W$ is Calabi--Yau, we have 
$$\HH^{j_W}=\bigoplus_{b=0}^{k-1} H_{s^b}^*(\Sigma_{W,K[j_W]};\CC),$$
where $(\frac{b}k+p,\frac{b}k+q)$-classes in $H_{s^b}^*(\Sigma_{W,K[j_W]};\CC)$
match $(\frac{b}k+p,\frac{b}k+q)$-classes in $\Hcal_{W,K[j_W],s^b}$
for any $p,q\in \ZZ$.  
\qed
\end{pro}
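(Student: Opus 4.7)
The plan is to derive the first equality directly from the definitions, and then to apply Theorem~\ref{thm:lgcy} sector by sector in order to obtain the Calabi--Yau statement; the argument is essentially formal once invariants are properly tracked.

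First I would rewrite $\HH^{j_W}$ by combining invariants: since $\HH=[U_{K[j_W,s]}(W)]^K$ by definition and $\Aut_W$ is abelian and acts sector-wise on the decomposition \eqref{eq:doubledecomp}, taking the further $j_W$-invariants gives
\[
\HH^{j_W}
=\bigl[U_{K[j_W,s]}(W)\bigr]^{K[j_W]}
=\bigoplus_{h\in K[j_W,s]}\bigl(\Jac(W_h)\bigr)^{K[j_W]}(-\age(h)).
\]
Since $s$ acts only on $x_0$ and commutes with every element of $K[j_W]$, we may then partition $K[j_W,s]=\bigcup_{b=0}^{k-1}s^bK[j_W]$ and enumerate sectors via the representatives $s^0,\dots,s^{k-1}$. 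Regrouping the direct sum along this partition and comparing with Definition~\ref{defn:hcal} identifies the $b$-th block with $\Hcal_{W,K[j_W],s^b}$, proving the first equality.

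For the Calabi--Yau claim I would apply Theorem~\ref{thm:lgcy} with $P=W$, $H=K[j_W]$, and $g=s^b$, for each $b$. The hypotheses are met: $W$ is of Calabi--Yau type by assumption, and $K\subseteq\SL_f$ together with $j_W=s\cdot j_f$ force $j_W\in K[j_W]\subseteq\SL_W$. The theorem then supplies a bidegree-compatible isomorphism $\Hcal_{W,K[j_W],s^b}\cong H_{s^b}^*(\Sigma_{W,K[j_W]};\CC)(-1)$; summing over $b$ yields the second equality.

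I expect the only delicate point to be the bigrading match claimed at the end. For a monomial class in the sector labelled by $h=s^bg$ with $g\in K[j_W]$, when $b\neq 0$ the variable $x_0$ is not fixed by $h$, so $\Jac(W_h)$ depends only on $x_1,\dots,x_n$ and the contribution of $s^b$ to $\age(h)$ is exactly $b/k$. Combined with Steenbrink's formula from Section~\ref{sect:Jacobiring}, the $-\age(h)$ shift in Definition~\ref{defn:hcal}, and the age shift $\mathfrak a_{s^b}$ in Definition~\ref{defn:g_orbifoldcoh}, this produces the claimed $(b/k+p,b/k+q)$-component identification; the $b=0$ case reduces to the usual LG/CY matching. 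No further obstruction arises, since the statement is in the end a repackaging of invariants combined with a single invocation of Theorem~\ref{thm:lgcy}.
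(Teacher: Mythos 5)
Your derivation of the first equality is exactly the formal unwinding the paper has in mind (the proposition is stated with a \qed, i.e.\ as immediate from Definition~\ref{defn:hcal} and the coset decomposition $K[j_W,s]=\bigsqcup_{b}s^bK[j_W]$), and your invocation of Theorem~\ref{thm:lgcy} with $P=W$, $H=K[j_W]$, $g=s^b$ for the Calabi--Yau part is the right move. Two small caveats there: the disjointness of the cosets is an implicit standing assumption of the paper (it is what makes the gradings $d_j,d_s$ on $K[j_W,s]$ well defined), and Theorem~\ref{thm:lgcy} carries a Tate shift $(-1)$, so the "matching'' of $(\tfrac{b}{k}+p,\tfrac{b}{k}+q)$-classes is a matching of fractional parts, with the integer parts $p,q$ shifting by $1$ across the isomorphism --- consistent with the quantifier "for any $p,q\in\ZZ$'' but worth saying explicitly.

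The one genuine error is in your justification of the bigrading: it is \emph{not} true that $x_0$ fails to be fixed by $h\in s^bK[j_W]$ when $b\neq 0$. For example $h=s^b\,j_W^{\,k-b}\kappa=j_f^{\,k-b}\kappa$ lies in $s^bK[j_W]$ and acts trivially on $x_0$; these sectors are precisely the "fixed block'' $\HH^f$ ($Q_s=0$, equivalently $d_j+d_s\in\ZZ$) that Section~\ref{sect:unprojected} is built around, so they cannot be discarded. The contribution of $x_0$ to $\age(h)$ is $\{(a+b)/k\}$, not $b/k$. The correct bookkeeping is: age is additive modulo $\ZZ$, $\age(j_W)=\sum_i w_i/d=1$ by the Calabi--Yau condition, and $\age(\kappa)\in\ZZ$ for $\kappa\in K\subseteq\SL_f$, so $\age(h)\equiv b/k \pmod\ZZ$ for every $h\in s^bK[j_W]$; moreover $j_W$-invariance of a monomial class forces its charge $\sum_j b_jw_j/d$ to lie in $\ZZ$, whence by Steenbrink's formula both entries of the bidegree are congruent to $\age(h)\equiv b/k$ modulo $\ZZ$. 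With that replacement your argument closes; note that it uses all three hypotheses ($j_W$-invariance, Calabi--Yau, and $K\subseteq\SL_f$), which your version did not visibly need --- a sign that the shortcut was too good to be true.
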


\subsection{The twist and the elevators}
Throughout this section the polynomial $W$ and the group $K$ will be fixed; 
 we simplify the notation  and write $$\HH:=\Hbar_{K[j_W,s]}^K(W),\qquad 
 j:=j_W,\qquad \HH^\vee:=\Hbar_{K^\vee}^{K[j_W,s]^\vee}(W^\vee),\qquad j^\vee=j_{W^\vee}.$$
 We also write $M$ for the mirror map $M_W$.

Note that the monomial element 
$(\phi,h)\in \HH$ with $$\phi=\prod_i x_i^{b_i}\bigwedge_{i\in I} dx_i\qquad \text{and}\qquad  
I=\{i\mid h\cdot x_i=x_i\}$$ is an 
eigenvector with respect to the diagonal symmetry
$\al=[p_0,\dots,p_n]$: the 
eigenvalue is 
$\exp(2\pi\cxi  \sum_j b_jp_j).$
It is natural to 
attach to each $(\phi,g)$ and $\al$ the 
so-called 
$\al$-{charge} of the form $\phi$ defined on the
$g$-fixed space: 
$$Q_\al\colon (\phi,g)\mapsto Q_\al(\phi,g)=
\sum_{j\in J} b_jp_j \mod \ZZ.$$

We decompose $\Hbar$ as  
$$\Hbar= \bigoplus_{a=0}^{k-1} \bigoplus_{b=0}^{k-1}
\bigoplus_{g\in j^{a}s^{b} K} \left(\Jac W_g\right)^K.$$
 and we can consider the following
$\QQ/\ZZ$-valued gradings on 
the set of generators 
$$\left(\phi=\prod_{i\in I} x_i^{b_i-1}\bigwedge_{i\in I} dx_i,\quad g=[p_0,p_1,\dots,p_n]\in j^as^bK\right)$$
\begin{enumerate}
\item the $j$-charge
$Q_j=Q_j\colon (\phi,g)\mapsto Q_j(\phi,g);$
\item the $j$-degree 
$d_j=\frac{a}k;$
\item the $s$-charge 
$Q_s=Q_s\colon (\phi,g)\mapsto Q_s(\phi,g);$
\item the $s$-degree
$d_s=\frac{b}k.$
\end{enumerate}
\begin{figure}
\begin{picture}(200,150)(-40,0)
  \put(0,40){\line(3,-1){69}}
  \put(69,17){\line(3,1){69}}
  \put(69,17){\line(0,1){63}}
  \put(69,80){\line(3,1){69}}
  \put(69,80){\line(-3,1){69}}
  \put(138,40){\line(0,1){63}}
  \put(0,40){\line(0,1){63}}
  \put(138,103){\line(-3,1){69}}
  \put(0,103){\line(3,1){69}}
  \put(0,103){\line(2,-3){54}}
  \put(69,126){\line(2,-3){54}}
  \put(54,22){\line(3,1){69}}
  \put(159,10){\line(0,1){120}}
      \put(152,110){\footnotesize
$\tau$}
  \put(159,107){\line(-1,0){45}}
  \put(114,70){\vector(0,1){10}}
  \put(114,70){\vector(0,-1){5}}
  \put(274,70){\vector(0,1){10}}
  \put(274,70){\vector(0,-1){5}}
    \put(116,70){\footnotesize
$e^m$}
    \put(276,70){\footnotesize
$e^f$}
  \put(159,107){\vector(1,0){135}}
    \put(149,62){\footnotesize
$M$}
  \put(159,60){\vector(-1,0){15}}
  \put(159,60){\vector(1,0){15}}
  \put(30,130){\footnotesize
\text{moving}}
  \put(210,130){\footnotesize
\text{fixed}}
  \put(55,60){\footnotesize
$Q_s$}
  \put(17,60){\footnotesize
$D$}
  \put(295,60){\footnotesize
$D'$}
  \put(215,60){\footnotesize
$d_j-d_s$}
  \put(0,10){\footnotesize
$Q_s-Q_j$}
  \put(180,10){\footnotesize
$(Q_s)-Q_j$}
  \put(100,10){\footnotesize
$d_j(=-d_s)$}
  \put(280,10){\footnotesize
$d_j$}
  \put(180,40){\line(3,-1){69}}
  \put(249,17){\line(3,1){69}}
  \put(249,17){\line(0,1){63}}
  \put(249,80){\line(3,1){69}}
  \put(249,80){\line(-3,1){69}}
  \put(318,40){\line(0,1){63}}
  \put(180,40){\line(0,1){63}}
  \put(318,103){\line(-3,1){69}}
  \put(318,103){\line(-2,-3){54}}
   \put(180,103){\line(3,1){69}}
 \end{picture}
 
 \caption{two blocks (with elevators) representing the coordinates of the moving subspace $\HH^m$ and of the fixed subspace $\HH^f$. The condition $Q_j=0$ defines a plane cutting the diagonal $D$ of the left hand side face of the moving block; $D$ represents the moving part of $H^*_{\id}(\Sigma_{W,H};\CC)$. On the fixed block, the same condition $Q_j=0$ defines the face on the right hand side; within it, the diagonal $D'$ is symmetrical to $D$ and represents the fixed part of 
$H^*_{\id}(\Sigma_{W,H};\CC)$.}\label{figure}
\end{figure}
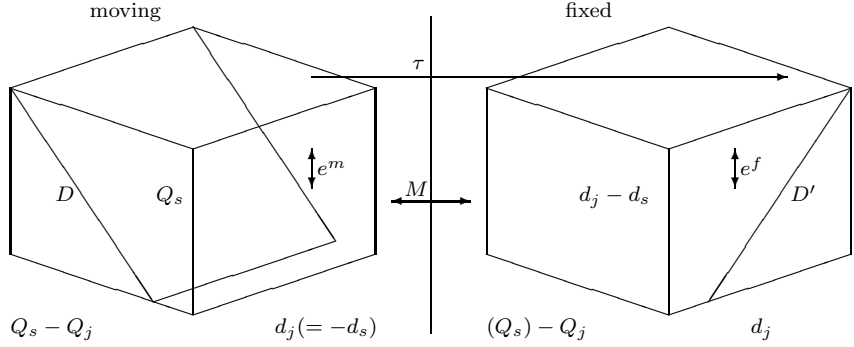
We can now decompose 
$$\HH=\bigoplus_{{0\le a,b,c,d\le k-1}}
\left[\Hbar\mid (d_j,d_s,Q_j,Q_s)=\frac1k(a,b,c,d)\right].$$
The following proposition further simplifies the decomposition.
\begin{pro}[the moving subspace, the fixed subspace)]\label{rem:support}
For any element $(\phi,g)$ we have either 
(i) $d_s=-d_j$, or (ii) $Q_s \neq 0$.
\end{pro}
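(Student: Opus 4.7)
My plan is a short binary case analysis based on whether or not $g$ fixes the distinguished variable $x_0$.

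Write $g = j_W^a s^b \kappa$ with $\kappa \in K$ and $0 \le a,b < k$, which is the representation giving $d_j = a/k$ and $d_s = b/k$. I would first observe that both $j_W$ and $s$ act on $x_0$ by multiplication by $\zeta_k := e^{2\pi i/k}$ (for $j_W$ because $w_0/d = 1/k$), while every $\kappa \in K \subseteq \Aut_f$ fixes $x_0$. Consequently the $x_0$-eigenvalue of $g$ equals $\zeta_k^{a+b}$, so $g$ fixes $x_0$ if and only if $a+b \equiv 0 \pmod{k}$. When this holds, $d_j + d_s = (a+b)/k \in \ZZ$, i.e.\ $d_s = -d_j$ in $\QQ/\ZZ$, and case (i) of the proposition is established.

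Otherwise $a+b \not\equiv 0 \pmod{k}$, so $0 \notin I$ and hence $\phi \in \Jac(W_g)$ has no $x_0$ factor (neither $x_0^{b_0-1}$ nor $dx_0$ appears). Computing $Q_s(\phi,g) = \sum_j b_j p_j$ in the convention of \S\ref{sect:unprojected} (where the sum tracks the full $s$-weight of the class $(\phi,g)$ in the $g$-inertia sector, so that a direction $j \notin I$ contributes $p_j(s)$ through the `virtual wedge' $dx_j$ collapsed out of $\phi$), the $x_0$ direction contributes $p_0(s) = 1/k$. All other factors $p_i(s)$ with $i \geq 1$ vanish since $s = [\tfrac1k,0,\dots,0]$. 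Therefore $Q_s \equiv 1/k \not\equiv 0 \pmod{\ZZ}$, which is case (ii).

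The main bookkeeping hurdle is precisely the interpretation of $Q_s$ in narrow sectors where $I = \emptyset$ and $\phi = 1$ is the empty wedge: one must read $Q_s$ as the total $s$-weight on the inertia class, i.e.\ as a sum over all coordinates with the convention $b_j = 1$ for $j \notin I$ (equivalently, as the $s$-age contribution of $g$). Once this convention is pinned down, both cases above are elementary, and the uniqueness of the decomposition $g = j_W^a s^b \kappa$ (granted by the well-definedness of the $\tfrac{1}{k}\ZZ/\ZZ$-gradings $d_j, d_s$ stated in \S\ref{sec:polyaut}) ensures the argument does not depend on the choice of representative.
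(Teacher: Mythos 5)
Your first case reproduces the paper's computation exactly: $j_W$ and $s$ both scale $x_0$ by $e^{2\pi\cxi/k}$ while $K\subseteq \Aut_f$ fixes it, so $g\cdot x_0=\exp(2\pi\cxi(d_j+d_s))\,x_0$ and $g$ fixes $x_0$ precisely when $d_s=-d_j$. The problem is your second case. In the paper, $Q_\al(\phi,g)$ is the weight of $\al$ on the monomial form $\phi\in\Jac(W_g)$, i.e.\ $\sum_{j\in I}b_jp_j$ with the sum running \emph{only} over the $g$-fixed variables; there is no ``virtual wedge'' contribution from the directions $j\notin I$. (Check this against the proof of Proposition \ref{pro:fixedLGdontmove}, where $Q_s=b_0/k$ is read off from the factor $x_0^{kQ_s-1}dx_0$ actually present in $\phi$, and against the narrow sectors, where $\phi=1$ and every charge vanishes.) With the actual definition, if $g$ does not fix $x_0$ then $x_0$ is not a variable of $\Jac(W_g)$ and $s$ acts trivially on all remaining variables, so $Q_s=0$, not $1/k$. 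Your convention would force $Q_s\neq 0$ on every state, which would make $\HH^f=[\HH\mid Q_s=0]$ zero and is incompatible with the twist $\tau\colon\HH^m_{X,Y,Z}\to\HH^f_{X,Y,Z}$ and with the whole decomposition used in the rest of Section \ref{sect:unprojected}.

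What the proposition actually encodes --- and what is used immediately afterwards, where $[\HH\mid Q_s\neq0]$ is identified with $[\HH\mid d_j+d_s=0]$ --- is that conditions (i) and (ii) are \emph{equivalent}, so that the alternative is an exclusive dichotomy between the moving part ($Q_s\neq0$, equivalently $d_s=-d_j$) and the fixed part ($Q_s=0$, equivalently $d_s\neq-d_j$). The two implications to verify are: if $0\notin I$ then $Q_s=0$ (immediate, as above, since $s=[\tfrac1k,0,\dots,0]$ acts trivially on $x_1,\dots,x_n$); and if $0\in I$ then $Q_s=b_0/k$ with $1\le b_0\le k-1$, because $x_0^{k-1}=\partial_0W/k$ vanishes in $\Jac(W_g)$, hence $Q_s\neq0$. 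Your argument establishes neither: the first is replaced by a false claim resting on the invented convention, and the second (the only point requiring an input beyond the eigenvalue computation on $x_0$, namely the relation in the Jacobi ring) is never addressed. The repair is short, but it requires abandoning the virtual-wedge reading of $Q_s$ rather than pinning it down.
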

\begin{proof}
This happens because $Q_s= 0$ if and only if 
$g\cdot x_0=x_0$, \emph{i.e.} 
$0\in I$. By definition of $d_s$ and $d_j$ we have 
$g\in j^{kd_j}s^{kd_s}K$ and $g\cdot x_0=\exp(2\pi\cxi (d_j+d_s)) x_0$. 
We conclude that $Q_s=0$ if and only if 
$d_s+d_j\in \ZZ$.
\end{proof}
In other words $\HH$ decomposes into an $s$-moving part $\HH^m$ ($Q_s\neq0$)
and an $s$-fixed part $\HH^f$ ($Q_s=0$)
$$\HH=\HH^m\oplus \HH^f=[\HH\mid Q_s\neq 0]\oplus [\HH\mid Q_s=0]$$
and the first summand  is $[\HH\mid d_j+d_s=0]$; hence 
the three parameters $d_j,Q_j,Q_s$ suffice for decomposing 
$\HH^m$ and 
the three parameters $d_j,d_s,Q_j$ suffice for decomposing $\HH_F$.
We write
\begin{align*}\HH^m&=
\bigoplus_{\substack{{0\le X,Y<k}\\{0< Z< k}}}
\left[\Hbar\mid (d_j,Q_s-Q_j,Q_s)=\frac1k(X,Y,Z)\right]=\bigoplus_{\substack{{0\le X,Y< k}\\{0< Z<k}}}\HH^m_{X,Y,Z}
,\\ 
\HH^f&=
\bigoplus_{\substack{{0\le X,Y< k}\\{0< Z< k}}}
\left[\Hbar\mid (d_j,-Q_j,d_j+d_s)=\frac1k(X,Y,Z)\right]
=\bigoplus_{\substack{{0\le X,Y< k}\\{0< z< k}}}\HH^f_{X,Y,Z},\end{align*}
where the choice of the three parameters in $\{0,\dots, k-1\}$ modulo $k\ZZ$
\begin{equation}\label{eq:coord}
X=kd_j,\ \ Y=\begin{cases}k(Q_s-Q_j)=-kQ_j& {\text{in }\HH^m}\\ k(Q_s-Q_j) & {\text{in }\HH^f}\end{cases}, \ \ Z
=\begin{cases}kQ_s& {\text{in }\HH^m}\\ k(d_j+d_s) & {\text{in }\HH^f}\end{cases}\end{equation}
is motivated by 
the following fact.
\begin{pro}[twist]\label{pro:twist}
For $Z=1,\dots, k-1$, we have an isomorphism
\begin{align*}\tau\colon \HH^m_{X,Y,Z}&\to \HH^f_{X,Y,Z}\\
(x_0^{Z-1}dx_0\wedge\phi,g)&\mapsto (\phi,s^{Z}g)
\end{align*}
transforming $(p,q)$-classes into  $(p-1+2Z/k,q)$-classes.
\end{pro}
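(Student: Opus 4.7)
The plan is to verify well-definedness, bijectivity, and coordinate preservation for $\tau$, and then to compute how the Hodge bidegree transforms.

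First, I would check well-definedness. In $\HH^m$ we have $Q_s\neq 0$, which forces $0\in I$ (equivalently $g\cdot x_0=x_0$, i.e.\ $d_j+d_s\in\ZZ$) and forces the exponent of $x_0$ in the monomial to be nonzero modulo $k$. The Jacobi relation $\partial_0 W_g=kx_0^{k-1}\equiv 0$ restricts this exponent to $\{1,\dots,k-1\}$, so writing it as $Z$ matches the stated range. On the target side $s^Zg$ acts on $x_0$ as $\exp(2\pi\cxi Z/k)$, so the new fixed index set is $I\setminus\{0\}$, we have $Q_s'=0$, and $(\phi,s^Zg)$ lies in $\HH^f$ with $\phi\in \Jac(f_g)=\Jac(W_{s^Zg})$. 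Because $K\subset \Aut_f$ acts trivially on $x_0$, $K$-invariance on the two sides coincides. Bijectivity then follows from the obvious inverse $(\phi',h)\mapsto(x_0^{Z-1}dx_0\wedge \phi',s^{-Z}h)$, where $Z$ is read off from $h$ via $Z\equiv k(d_j+d_s)\pmod k$.

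Next I would verify that the coordinates $(X,Y,Z)$ are preserved. From $s^Zg\in j^{kd_j}s^{kd_s+Z}K$ one reads off $d_j'=d_j$ and $d_s'=d_s+Z/k$. Stripping $x_0^{Z-1}$ decreases $Q_j$ by $Zw_0/d=Z/k$, using $w_0/d=1/k$, and sets $Q_s'=0$. Substituting into \eqref{eq:coord}, and using $d_j+d_s\equiv 0\pmod 1$ on the moving side, yields $X'=kd_j=X$, $Z'=k(d_j+d_s)+Z\equiv Z\pmod k$, and $Y'=-k(Q_j-Z/k)=k(Z/k-Q_j)=k(Q_s-Q_j)=Y$.

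Finally, for the bidegree shift, a monomial class in $(\Jac W_g)(-\age(g))$ sits in bidegree $(p,q)=(|I|-1-Q_j+\age(g),\,Q_j+\age(g))$. Passing through $\tau$ produces three effects: $|I|$ drops by $1$ since $x_0$ is no longer fixed; $Q_j$ drops by $Z/k$ since we removed $x_0^{Z-1}$ from the form; and $\age$ increases by $Z/k$ since the $x_0$-slot contributed $0$ under $g$ but contributes $Z/k$ under $s^Zg$. Summing these changes gives a net shift of $(-1+2Z/k,0)$, as claimed. The only delicate point, and the main obstacle in the computation, is tracking that these three contributions partially cancel in the $q$-coordinate while reinforcing in the $p$-coordinate; once the identity $w_0/d=1/k$ coming from $W=x_0^k+f$ is pinned down, everything is mechanical.
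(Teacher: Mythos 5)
Your proof is correct and takes essentially the same route as the paper's (much terser) argument, which simply observes that $\tau$ exchanges $d_j+d_s$ with $Q_s$ while preserving $d_j$ and $Q_s-Q_j$; you have additionally spelled out the well-definedness via the Jacobi relation $x_0^{k-1}\equiv 0$, the explicit inverse, and the bidegree bookkeeping (where the three contributions $\Delta|I|=-1$, $\Delta Q_j=-Z/k$, $\Delta\mathrm{age}=+Z/k$ indeed cancel in $q$ and add to $-1+2Z/k$ in $p$). All of these details check out against the conventions of \S\ref{sect:Jacobiring} and \eqref{eq:coord}.
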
 
\begin{proof}
Indeed the above homomorphism  exchanges $d_j+d_s$ with $Q_s$  
and 
preserves $d_j$ and $Q_s-Q_j$.
\end{proof}

\begin{rem}
The index $Q_s-Q_j$ may be regarded as the (opposite of) the $j$-charge
of the form $\phi$ restricted to $(x_0=0)$. 
\end{rem}

There are natural isomorphisms matching $\HH^f_{X,Y,1}\cong 
\HH^f_{X,Y,2}\cong\dots\cong \HH^f_{X,Y,k-1}$ and 
$\HH^m_{X,Y,1}\cong 
\HH^m_{X,Y,2}\cong\dots\cong \HH^m_{X,Y,k-1}$. We 
refer to them as ``elevators''.
\begin{pro}[elevators] \label{pro:elevators}
For any $0\le X,Y< k$ and $0<Z'<Z''<k$
we have the isomorphisms
\begin{align*}
e^m_{Z',Z''}\colon\HH^f_{X,Y,Z'}&\to \HH^f_{X,Y,Z''}&&&&&e^f_{Z',Z''}\colon\HH^f_{X,Y,Z'}&\to \HH^f_{X,Y,Z''}\\
(\phi, g)&\mapsto (x_0^{Z''-Z'}\phi, g)&&&&&(\phi, g)&\mapsto (\phi, s^{Z''-Z'}g),
\end{align*}
with $e^m_{Z',Z''}$ and $e^f_{Z',Z''}$ 
transforming $(p,q)$-classes into classes whose bidegrees equal
 $(p-(Z''-Z')/k,q+(Z''-Z')/k)$ and $(p+(Z''-Z')/k,q+(Z''-Z')/k)$, respectively.\qed
 \end{pro}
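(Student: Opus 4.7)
The plan is to verify each map separately by tracking how it alters the defining data $(\phi,g)$ and the three parameters $X,Y,Z$. Both maps will have obvious inverses (swap $Z''-Z'$ with $Z'-Z''$), so the substantive work is to check well-definedness, the invariance of $X,Y$, the shift of $Z$ from $Z'$ to $Z''$, and the claimed shift in bidegree. The underlying point is that $e^m$ acts purely on the $\phi$-component (by multiplication by a power of $x_0$) while $e^f$ acts purely on the $g$-component (by multiplication by a power of $s$), and these two operations produce the same change in $Z$ through different coordinate descriptions of the same space.

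For $e^m$, an element lies naturally in the summand where $g$ fixes $x_0$, so $\phi$ factors as $x_0^{Z'-1}\,dx_0\wedge\phi'$ with $Z'=b_0 = kQ_s\in\{1,\dots,k-1\}$. Multiplication by $x_0^{Z''-Z'}$ yields $(x_0^{Z''-1}\,dx_0\wedge\phi',g)$, which is non-zero in $\Jac(W_g)$ because the hypothesis $Z''<k$ keeps us strictly below the Jacobi-ideal relation $\partial_0 W_g = kx_0^{k-1}=0$. The group element is unchanged, so $X=kd_j$ is fixed; since the $s$- and $j$-weights on $x_0$ both equal $1/k$, incrementing $b_0$ by $Z''-Z'$ shifts $Q_s$ and $Q_j$ by the same amount, hence $Y=k(Q_s-Q_j)$ is preserved; and the new $Q_s$ equals $Z''/k$, producing $Z=Z''$. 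The bidegree formula $(p,q)=(n-\sum b_iw_i/d,\sum b_iw_i/d)$ from \S\ref{sect:Jacobiring} immediately yields the shift $\bigl(-(Z''-Z')/k,+(Z''-Z')/k\bigr)$.

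For $e^f$, the form $\phi$ has no $x_0$-dependence, and the new automorphism $s^{Z''-Z'}g$ still fails to fix $x_0$ (its weight on $x_0$ is $Z''/k\in(0,1)$) while fixing the remaining variables in exactly the same pattern as $g$. Therefore $\phi\in\Jac(W_{s^{Z''-Z'}g})$ and the map is well-defined on the appropriate subspace. Since $\phi$ is unchanged, so are $Q_j$ and hence $Y$; since only the $s$-factor of $g$ changes, $d_j$ (and thus $X$) is preserved while $d_s$ increases by $(Z''-Z')/k$, so $Z=k(d_j+d_s)$ increases by $Z''-Z'$. The only bidegree change comes from the age shift: because no wrap-around occurs for $Z',Z''\in(0,k)$, one has $\age(s^{Z''-Z'}g)=\age(g)+(Z''-Z')/k$, which shifts both $p$ and $q$ by $(Z''-Z')/k$ as claimed.

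I expect the most delicate point to be the well-definedness of $e^m$ at the level of the Jacobi ring—namely verifying that the multiplied element is not killed by the Jacobi ideal. This follows from the bound $Z''<k$ together with the fact that an irreducible monomial representative in $\Jac(W_g)$ saturates its $x_0$-dependence at $x_0^{b_0-1}$ with $b_0\le k-1$. All other checks reduce to mechanical bookkeeping with the coordinate formulas \eqref{eq:coord} and the bidegree convention from \S\ref{sect:Jacobiring}, and the explicit inverses establish bijectivity with no further argument.
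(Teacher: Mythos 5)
Your proof is correct and is precisely the direct verification that the paper leaves implicit (the proposition is stated without proof): you check well-definedness in the restricted Jacobi rings, the invariance of $X$ and $Y$, the shift of $Z$ from $Z'$ to $Z''$, and the bidegree change via the charge and age formulas, and you rightly single out the two points that require an actual argument — that $x_0^{Z''-1}$ survives the Jacobi relation $x_0^{k-1}=0$ because $Z''<k$, and that no wrap-around occurs in the age of $s^{Z''-Z'}g$ because $Z'/k$ and $Z''/k$ both lie in $(0,1)$. You also correctly read $e^m_{Z',Z''}$ as a map $\HH^m_{X,Y,Z'}\to\HH^m_{X,Y,Z''}$ on the moving part, resolving the evident typo in the printed statement, which is consistent with the surrounding text and with Proposition \ref{pro:twist}.
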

For $0<Z'<Z''<k$, we set $e^m_{Z'',Z'}=(e^m_{Z',Z''})^{-1}$ and $e^f_{Z'',Z'}=(e^f_{Z',Z''})^{-1}$.
  
Proposition \ref{pro:MS} specializes to the following statement. 
 \begin{pro}\label{pro:mirrorcharges}
 The mirror isomorphism $M$ yields isomorphisms
 \begin{align*}
M\colon \HH^f_{X,Y,Z}\xrightarrow{\sim} (\HH^\vee)^m_{Y,X,Z},\qquad \qquad M\colon \HH^m_{X,Y,Z}&\xrightarrow{\sim} (\HH^\vee)^f_{Y,X,Z}.
\end{align*}
 \end{pro}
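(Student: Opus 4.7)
The plan is to deduce the proposition from Proposition \ref{pro:MS} (which produces the mirror isomorphism $M\colon\HH\xrightarrow{\sim}\HH^\vee$) combined with the Thom--Sebastiani decomposition $M=M_{x_0^k}\otimes M_f$ and the explicit formula for $M_{x_0^k}$ given in Example \ref{exa:xk}. Proposition \ref{pro:MS} already furnishes the isomorphism of vector spaces; what remains is the matching of the tuples $(X,Y,Z)$: the interchange of moving and fixed subspaces, the swap $X\leftrightarrow Y$, and the preservation of $Z$. By invertibility of $M$ it is enough to prove one of the two claimed isomorphisms.

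First, for the moving/fixed interchange, decompose a monomial $(\phi,g)=(\phi_0\wedge\phi_1,(g_0,g_1))$ according to $W=x_0^k+f$ and apply $M=M_{x_0^k}\otimes M_f$. Example \ref{exa:xk} gives $M_{x_0^k}(x_0^{b-1}dx_0,1)=(1,s^{b})$ and $M_{x_0^k}(1,s^{c})=(x_0^{c-1}dx_0,1)$; thus $M_{x_0^k}$ interchanges the two alternatives ``$x_0$ is $g_0$-fixed'' and ``$x_0$ is not $g_0$-fixed''. By Proposition \ref{rem:support} the former holds exactly when $Q_s\neq 0$, so $M(\HH^m)=(\HH^\vee)^f$ and $M(\HH^f)=(\HH^\vee)^m$.

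Second, for the $(X,Y,Z)$-coordinates, the Krawitz--Borisov identification $U^{\ell_g(\phi)}_g(W)\cong U^g_{\ell_g(\phi)}(W^\vee)$ supplies a pairing formula: for every $\al\in\Aut_W$ the eigenvalue of $\al^*$ on $(\phi,g)$ is $\ell_g(\phi)(\al)=\exp(2\pi\cxi Q_\al(\phi,g))$, and $M(\phi,g)=(\phi',h')$ satisfies $h'=\ell_g(\phi)\in\Aut_{W^\vee}$. Computing the character $h'(\al)$ via the Cartier pairing with the exponent matrix $E$, one finds for $\al=s$ that $h'(s)=\exp(2\pi\cxi h'_0)$ where $h'_0$ is the first coordinate of $h'$, and for $\al=j_W$ that $h'(j_W)=\exp(2\pi\cxi\age(h'))$. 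Comparison yields the first-coordinate identity $h'_0=Q_s(\phi,g)$ (consistent with Proposition \ref{pro:movingbecomefixed}) and the age identity $\age(h')\equiv Q_j(\phi,g)\pmod\ZZ$; the symmetric computation on the mirror side gives $g_0=Q_{s^\vee}(\phi',h')$ and $\age(g)\equiv Q_{j^\vee}(\phi',h')\pmod\ZZ$. Since elements of $K[j_W,s]^\vee$ kill both $s$ and $j_W$, they have first coordinate $0$ and integer age; the decomposition $h'=(j_{W^\vee})^{a'}(s^\vee)^{b'}k'$ with $k'\in K[j_W,s]^\vee$ is thus pinned down: the first-coordinate and age equations jointly determine $a'$ and $b'$, hence $d_{j^\vee}(h')$ and $d_{s^\vee}(h')$, and analogously for $d_j(g)$ and $d_s(g)$. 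Substituting these into the definitions of $(X,Y,Z)$ on either side yields the claimed swap.

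The main obstacle is bookkeeping: juggling the four $\QQ/\ZZ$-valued gradings $d_j,d_s,Q_j,Q_s$ through both the Cartier duality (whose pairing involves the exponent matrix $E$, not componentwise multiplication) and the mirror map, and extracting the individual $j_{W^\vee}$- and $s^\vee$-components of $h'$ rather than only their sum. The identity $K^\vee=K[j_W,s]^\vee[j_{W^\vee},s]$ of Proposition \ref{pro:MS} ensures that the decomposition of $h'$ is unambiguous.
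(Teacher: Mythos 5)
Your proposal is correct and follows essentially the same route as the paper's proof: the moving/fixed interchange and the preservation of $Z$ come from the Thom--Sebastiani factorization $M=M_{x_0^k}\otimes M_f$ together with Example \ref{exa:xk}, while the swap $X\leftrightarrow Y$ comes from an age/determinant computation that uses the Calabi--Yau condition. Your evaluation of the Cartier pairing at $s=\rho_0$ and at $j_W=\prod_i\rho_i$ (giving $h'_0=Q_s$ and $\age(h')\equiv Q_j$) is just a repackaging of the paper's identities $\det a_2=-kd_j$ and $\det b_2=kQ_j-kQ_s$ for the $f$-components, so the two arguments coincide in substance.
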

 
 \begin{proof}
 Let us consider $M$ as a morphism mapping $U_W$ to $U_{W^\vee}$. 
 For $W=(x_0)^k+f$ we have $W^\vee=(\ol x_0)^k+f^\vee$. Using \eqref{eq:doubledecomp},
 every state of the form 
 $(\phi,g)\in U_W$ can be regarded as an element of 
 $$(\phi,g)\in U^{b_1}_{a_1}\otimes U^{b_2}_{a_2}$$
 with $a_1\in \Aut_{(x_0)^k}=\ZZ/k$, $b_1\in \Aut_{(\ol x_0)^k}=\ZZ/k$, 
 $a_2\in \Aut_{f}$ and $b_2\in \Aut_{f^\vee}.$
 Example \ref{exa:xk} shows that there are only two possibilities: (1) $b_1=1$ or (2) $a_1=1$. More precisely, in case (1) 
 $(\phi,g)$ is in $\HH^f$, it is fixed by $s$, $b_1$ is the trivial symmetry $1\in \Aut_{(x_0)^k}$ and $a_1$ is the nontrivial character corresponding to $kd_s\in \ZZ/k\setminus \{0\}$. 
 In case (2) 
 $(\phi,g)$ is in $\HH^m$, it is not fixed by $s$, $a_1$ is trivial whereas $b_1$ is  the nontrivial character $kQ_s \in \ZZ/k\setminus \{0\}$. 
Since $M$ exchanges $a_1$ and $b_1$ this proves that $M$ exchanges $\HH^m$ and $\HH^f$ and 
preserves the coordinate $Z$ which coincides with $kd_s$ and $kQ_s$ within $\HH^m$ and $\HH^f$. 

Furthermore $M$ maps $U_{a_2}^{b_2}(f)$ to $U_{b_2}^{a_2}(f^\vee)$ with 
$a_2\in \SL_f[j_f]$ and $b_2\in \SL_{f^\vee}[j_{f^\vee}]$. We recall that 
$j_f^k\in \SL$ on both sides; therefore $\det a_2$ and $\det b_2$ are $\pmmu_k$-characters. 
The claim $(X,Y,Z)\mapsto (Y,X,Z)$ follows from 
$$\det a_2=-kd_j, \qquad \det b_2=-kQ_s+kQ_j,$$
where $\pmmu_k$-characters are identified with elements of $\ZZ/k$.
The first identity is immediate: $a_2$ is related to $(\phi,g)\in U_W$ by $a_2=g\!\mid_{x_0=0}$.
The identity follows from $\det (j\!\!\mid_{x_0=0})=\xi_k^{-1}$ by the Calabi--Yau condition. 
The second identity follows from the definition of
$$\ell_{a_2}\colon \Jac(f_{a_2})\to \Aut(f^\vee), \qquad \prod_j x_j^{b_j-1}\bigwedge_{j} dx_j\mapsto 
\prod_j \ol \rho_j^{b_j}$$from 
\eqref{eq:ellhmap}: 
the determinant of $\ol \rho_j$ is $\xi_d^{w_j}$; hence 
$\det(\prod_j \ol \rho_j^{b_j})$ is identified with  the $j_f$-charge $Q_{j_f}$ 
of the form $\phi$ restricted to 
$(x_0=0)$. This yields an identification between $\det b_2$ and the $\pmmu_k$-character
$kQ_j-kQ_s$.
 \end{proof}

 In view of the above proposition  
mirror symmetry operates as a plane symmetry exchanging the two blocks 
see Figure \ref{figure}.
\begin{rem}\label{rem:Fermat}
For Fermat potentials all the above discussion can be carried out more 
explicitly because the group elements $\pmb a=\prod_{i=1}^n \rho_i^{a_i}$ coincide 
with $\frac1d[a_1w_1,\dots,a_nw_n]$. By adopting this notation, the 
space $U_{a}^{b}(W)$ may be regarded as the one-dimensional space 
spanned by 
$$\left (\phi=\prod_{i=0}^n x_i^{b_i-1} \bigwedge_{i\in I} dx_i, \ \ \pmb a=\prod_{i=0}^n\rho_i^{a_i}\right)$$
where 
\begin{equation}\label{eq:equivalence_ab} 
a_i=0 \Leftrightarrow b_i\neq 0.\end{equation}
Mirror symmetry is simply an exchange of 
the $w\ZZ/d\ZZ$-valued vectors $\pmb a$ and $\pmb b$. 
The bidegree $(p,q)$ coincides with $$\left(\#(\pmb b)-\sum_{i=0}^n b_i \frac{w_i}d
+ \sum_{i=0}^n a_i \frac{w_i}d, \sum_{i=0}^n b_i \frac{w_i}d+ \sum_{i=0}^n a_i \frac{w_i}d,\right)$$
where $\#(\pmb b)$ is the number of elements $i$ such that $b_i\neq 0$.
Notice that $Q_s$ is $b_0/k$ and $d_s+d_j$ is $a_0/k$; 
therefore, the equivalence in Proposition \ref{rem:support} reads $b_0=0$ 
is a special case of \eqref{eq:equivalence_ab}. 
Furthermore we have 
$$(X,Y,Z)=\begin{cases} (a_0-|\pmb a|,b_0-|\pmb b|,b_0)&\text {on the moving side,}\\
(a_0-|\pmb a|,b_0-|\pmb b|,a_0)&\text {on the fixed side.}
\end{cases}$$
It is now clear that $M$ exchanges the moving side with the fixed side, 
$(X,Y,Z)$ with $(Y,X,Z)$, and 
$(p,q)$ with $(n+1-p,q)$.
\end{rem}

In view of Proposition \ref{pro:jinv},
we obtain the $j$-invariant contribution 
by setting $Q_j=0$. By \eqref{eq:coord}, this amounts to imposing $Y=Z$ within 
$\HH^m$ and $Y=0$ within $\HH^f$. 
We get 
$$\left(\bigoplus_{\substack{{0\le X< k}\\{0< Z< k}}}
\HH^m_{X,Z,Z} \right)\oplus
\left(\bigoplus_{\substack{{0\le X< k}\\{0< Z< k}}}
\HH^f_{X,0,Z} \right)$$

We get a picture of the $j$-invariant state space 
$\Hcal_{W,K[j_W],\id}$ 
by setting 
$X=0$ within 
$\HH^m$ and $X=Z$ within $\HH^f$
$$\Hcal_{W,K[j_W],\id}=\left(\bigoplus_{\substack{{0< t< k}}}
\HH^m_{0,t,t} \right)\oplus\left(\bigoplus_{\substack{{0<t< k}}}
\HH^f_{t,0,t}\right)$$
(we refer to Figure \ref{figure}).
More generally,
the $(d_s=b)$-part of $\HH^{j_W}$ is the 
state space $\Hcal_{W,K[j_W],s^b}$ 
(see Proposition \ref{pro:jinv}).
By \eqref{eq:coord}, we obtain it by setting 
$X=-b$ within 
$\HH^m$ and $Z=X+b$ within $\HH^f$ 
$$\Hcal_{W,K[j_W],s^b}=\left(\bigoplus_{\substack{{0< t< k}}}
\HH^m_{-b,t,t} \right)\oplus\left(\bigoplus_{\substack{{0\le t< k}\\t+b\neq 0}}
\HH^f_{t,0,t+b}\right).$$
Notice that the second summand only depends on  $\HH^f_{0,0,1}$
and $\HH^f_{1,0,1},\dots \HH^f_{k-1,0,k-1}$ since, for $b\neq 0$, it equals
\begin{equation}\label{eq:allinfo}
\Hcal_{W,K[j_W],s^b}=
\left(\bigoplus_{\substack{{0< t< k}}}
\HH^m_{-b,t,t} \right)\oplus\left(e^f_{1,b}(\HH^f_{0,0,1})\oplus 
\bigoplus_{\substack{{0< t< k}\\t+b\neq 0}}
e^f_{t,t+b}(\HH^f_{t,0,t})\right),
\end{equation}
with the convention $(e^f_{i,j})=(e^f_{j,i})^{-1}$ if $j<i$.
By Proposition \ref{pro:jinv} the above data correspond to 
$H_{s^b}^*(\Sigma_{W,K[j_W]};\CC)$ under the Calabi--Yau condition. 

 Proposition \ref{pro:crc} relates it to the cohomology
of an $s^b$-fixed locus within a crepant resolution. 
Using this geometric picture, we can predict some vanishing conditions, which we 
prove in general, without relying on any Calabi--Yau condition in the next proposition.
The first guess is immediate: since
$\langle s\rangle$ 
operates trivially on an $s$-fixed locus, it is natural to expect that 
$\Hbar^m_{-1,t,t}$ vanishes for all $t$. 
More generally, since 
$\langle s^b\rangle$ 
operates trivially on an $s^b$-fixed locus, we expect that 
$\Hbar^m_{-b,t,t}$ vanishes  if $tb\equiv 0$ mod $\ZZ$. 
We prove that this holds true regardless of any Calabi--Yau 
condition or existence of crepant resolution. 
\begin{pro}\label{pro:fixedLGdontmove}
Let $b\in \{0,\dots, k-1\}$. We have $$\Hbar^m_{b,t,t}= 0,$$
unless  
$t$ is a multiple of $k/\gcd(b,k)$ in $k\ZZ$. 
\end{pro}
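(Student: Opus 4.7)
The plan is to compute the action of $j_f^b$ on an arbitrary nonzero $(\phi,h)\in\HH^m_{b,t,t}$ in two different ways and compare, forcing $bt\equiv 0\pmod{k}$.

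First I would unpack the structure of such an element. By the twist isomorphism (Proposition \ref{pro:twist}), an element of $\HH^m_{b,t,t}$ can be written in the form $(\phi,h)=(x_0^{t-1}dx_0\wedge\psi,h)$ with $\psi$ a form on the $h$-fixed locus not involving $x_0$.  Since $\HH^m$ is the part where the sector $h$ fixes $x_0$, and the coordinate $X=b$ fixes $d_j=b/k$ (hence $d_s\equiv -b/k\pmod{\ZZ}$), one may take $h=j_W^b s^{-b}k_0=j_f^b k_0$ for some $k_0\in K$, with $\psi\in\Jac(f_h)^K$.  The remaining constraint $Y=t$ (together with the cancellation of the $x_0$-contributions in the difference $Q_s-Q_j$) pins down the $j_f$-eigenvalue of $\psi$ as $\xi^{-t}$, where $\xi=e^{2\pi i/k}$.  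Raising to the $b$-th power, and remembering that $j_f^b$ fixes the factor $x_0^{t-1}dx_0$, one gets
\[
j_f^b\cdot(\phi,h)\;=\;\xi^{-bt}\,(\phi,h).
\]

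The second computation uses the tautological action of $h$ on its own fixed-locus forms: since $\psi$ lives in $\Jac(f_h)$, a ring of forms on the locus fixed by $h$, the diagonal symmetry $h$ acts trivially on $\psi$.  Writing this out with $h=j_f^b k_0$ gives $j_f^b\cdot(k_0\cdot\psi)=\psi$.  The $K$-invariance built into $\HH$ forces $k_0\cdot\phi=\phi$, which (because $K\subseteq\Aut_f$ fixes $x_0$ and $dx_0$) is the same as $k_0\cdot\psi=\psi$, so $j_f^b\cdot\psi=\psi$ and hence $j_f^b\cdot(\phi,h)=(\phi,h)$.

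Comparing the two computations forces $\xi^{-bt}=1$, that is $bt\equiv 0\pmod{k}$, which is precisely the condition that $t$ be a multiple of $k/\gcd(b,k)$ modulo $k$.  In particular $\HH^m_{b,t,t}$ must vanish whenever this congruence fails.  The main obstacle is purely notational: identifying, from the constraints $Y=Z=t$ in the conventions of \S\ref{sect:unprojected}, that the $j_f$-eigenvalue of $\psi$ is exactly $\xi^{-t}$.  Once that charge computation is settled, the rest of the argument is a short symmetry comparison using the tautological invariance of $\psi$ under $h$ together with the $K$-invariance of $\HH$; no Calabi--Yau hypothesis enters.
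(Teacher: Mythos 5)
Your proof is correct and is essentially the paper's argument in a lightly repackaged form: the paper computes the charge identity $b\,t/k=-b\sum_{k\in I'}b_k w_k/d\equiv\sum_{k\in I'}b_k p_k\in\ZZ$ directly, and your two "computations of the $j_f^b$-action" (trivially via the tautological invariance of $\Jac(W_h)$ under $h$ together with $K$-invariance, versus the eigenvalue $\xi^{-bt}$ read off from $Y=k(Q_s-Q_j)=t$) are exactly the two sides of that congruence. The charge identification you flag as the main obstacle is settled correctly, and matches Remark \ref{rem:Fermat}'s observation that $Q_s-Q_j$ is the opposite of the $j$-charge of the form restricted to $(x_0=0)$.
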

\begin{proof}
We prove that $$\Hbar^m_{b,t,t}\neq 0$$ implies $bt\in k\ZZ$.
Recall that $\frac{t}k$ equals $Q_s-Q_j$. For 
$\Hbar^m_{b,t,t}\neq 0$, we can compute $Q_s-Q_j$ explicitly using 
$(\phi,j^b g)\in \Hbar^m_{b,t,t}$ with $g\in \Aut_f$ and $\phi$ 
a $g$-invariant form 
$$\phi=x_0^{kQ_s-1}\prod_{k\in I'}x^{b_k-1} dx_0\wedge\bigwedge_{k\in I'}dx_k,$$
with $I'=\{k\ge 1\mid j^b g\cdot x_k=x_k\}\subseteq \{1,\dots, N\}$. Using $w_0/d=1/k$,  we get 
$$Q_s-Q_j=Q_s-kQ_s\frac1k-\sum_{k\in I'} b_k\frac{w_k}d=-\sum_{k\in I'} b_k\frac{w_k}d.$$
Let us write $g$ as $[0,p_1\dots, p_n]\in \Aut_f$; then we have $k\in I'$ if and only if 
$$b\frac{w_k}d + p_k\in \ZZ.$$
Then $k$ divides $bt$, because  
$$b\frac{t}k = -b\sum_{k\in I} b_k\frac{w_k}d= -\sum_{k\in I} b_k b\frac{w_k}d=
\sum_{k\in I} b_k p_k\in \ZZ,$$
where the last relation holds since $\phi$ is $g$-invariant. 
\end{proof} 
\subsection{Mirror symmetry on the Landau--Ginzburg side}
In this section, we derive an interpretation of Proposition \ref{pro:mirrorcharges}
in terms of the Landau--Ginzburg state space. This amounts to 
expressing both sides of the isomorphism 
$\HH^m_{X,Y,Z}\cong \HH^f_{Y,X,Z}$ in terms of $j_W$-invariant spaces.

Consider the $j_W$-invariant  summands $$\HH^m_{X,Y,Y}\subset \HH^m \ \ \text{and}\ \  
 \HH^f_{X,0,Z}\subset \HH^f.$$ Their mirrors
are  $(\HH^\vee)^f_{Y,X,Y}$ and $(\HH^\vee)^m_{0,X,Z}$, and lie in 
the $j_W$-invariant part if and only if $X=0$ and $X=Z$. This happens if and 
only if we consider the mirror of $[\HH\mid Q_j=d_s=0]$ (imposing $X=0$ in $\HH^m$ 
and $X=Z$ in $\HH^f$ is the same as requiring $d_s=0$).

We obtain the first consequence of Proposition \ref{pro:mirrorcharges}. Let 
$$\left[\Hcal_{W,H,\id}\right]^{p,q}_{\chi_s=i}$$
be the eigenspace on which 
$s$ operates as the character $i\in \ZZ/k\ZZ$. 
 For any $H\in \Aut_W$ containing $j_W$, we have
\begin{equation}\label{eq:untwistedMS}
M\colon \left[\Hcal_{W,H,\id}\right]^{p,q}_{\chi_s=0}\xrightarrow{ \ \ \cong\ \ } \bigoplus_{i=1}^{k-1}
\left[\Hcal_{W^\vee,H^\vee,\id}\right]^{n+1-p,q}_{\chi_s=i},
\end{equation}
where $H=K[j]$. 

We now study
$\HH^f_{0,0,i}$. 
The subspace  $\HH^f_{0,0,i}$ mirrors $\HH^m_{0,0,i}$. 
By applying the twist $\tau$ from Proposition \ref{pro:twist}, 
we land again on $\HH^f_{0,0,i}$
which is a part of the $j$-invariant state space $\Hcal_{W^\vee,H^\vee,s^i}$. 
 
Using the elevators of Proposition \ref{pro:elevators}, we obtain 
a homomorphism 
\begin{equation} \label{eq:elevator-restriction}
el_i:=\bigoplus_{t\neq 0,k-i} e^f_{t,t+i} \colon \bigoplus_{\substack{{t \neq 0, k-i}}}
\HH^f_{t,0,t} \xrightarrow{\ \ \ \ \ \ } 
\left[\Hcal_{ W,H,s^i}\right]^s\end{equation}
whose cokernel coincides with $\HH_{0,0,i}^f$. Notice that $t+i$ is understood to be mod $k$. This means that the effect of the map on grading can be described as:
\[\im(el_i)(\tfrac{i}{k})\cong \bigoplus_{0<t<k-i} \HH^f_{t,0,t} \oplus \bigoplus_{k-i<t<k} \HH^f_{t,0,t}(1).\]
We write $el^\vee_i$ for the same construction on the mirror. We conclude 
\begin{equation}
M\colon \left[\frac{\Hcal_{W,H,s^i}(\tfrac{i}{k})^s}{\im(el_i)(\tfrac{i}{k})}\right]^{p,q}
\xrightarrow{\ \ \cong\ \ }
\left[\frac{\Hcal_{W^\vee,H^\vee,s^i}(\tfrac{i}{k})^s}{\im(el^\vee_i)(\tfrac{i}{k})}\right]^{n-p,q},
\end{equation}
where the bidegrees have been computed using $\Hcal(\frac{i}{k})^{p,q}=\Hcal^{p+i/k,q+i/k}$, the fact that $M_W$ transforms $(p,q)$-classes to $(n+1-p,q)$-classes, 
and the twist $\tau$ maps $(p,q)$-classes to $(p-1+\frac{2i}{k},q)$-classes.

Note that using \eqref{eq:untwistedMS} (recalling that $d_j$ and $d_s$ switch under mirror symmetry)  we can write 
\[[\im(el^\vee_i)(\tfrac{i}{k})]^{n-p,q}=
\bigoplus_{0<j<k-i} \left[\Hcal_{W,H,\id}(1,0)\right]^{p,q}_{\chi_s=j}\oplus \bigoplus_{k-i<j<k} \left[\Hcal_{W,H,\id}(0,1)\right]^{p,q}_{\chi_s=j}.\]
Write $\Hcal_{g}$ for $\Hcal_{W,H,g}$ and $\Hcal^\vee_g$ for $\Hcal_{W^\vee,H^\vee,g}.$  We obtain
\begin{equation}\begin{split}\label{eq:stwistedMS}
M\colon [\Hcal_{s^i}\left(\tfrac{i}{k}\right)]_{\chi_s=0}^{p,q} \oplus \bigoplus_{j<k-i} \left[\Hcal_{\id}(1,0)\right]^{p,q}_{\chi_s=j}\oplus \bigoplus_{j>k-i} \left[\Hcal_{\id}(0,1)\right]^{p,q}_{\chi_s=j} 
\xrightarrow{\ \ \cong\ \ } \\
[\Hcal^\vee_{s^i}\left(\tfrac{i}{k}\right)]_{\chi_s=0}^{n-p,q}
 \oplus \bigoplus_{j<k-i} \left[\Hcal^\vee_{\id}(1,0)\right]^{n-p,q}_{\chi_s=j}\oplus \bigoplus_{j>k-i} \left[\Hcal^\vee_{\id}(0,1)\right]^{n-p,q}_{\chi_s=j},
\end{split}\end{equation}
where $0<j<k.$
Finally we focus on the moving part of 
$\Hcal_{W,K[j_W],s^b}$ which, by \eqref{eq:allinfo} can be written 
as 
$\bigoplus_{0< t< k}
\HH^m_{k-b,t,t}$. This is the 
decomposition of $\Hcal_{W,K[j],s^b}$
into eigenspaces corresponding to the $s$-action operating 
as the character $t\in \ZZ/k$. 
By applying the mirror map $M_W$, the twist $\tau^{-1}$,
and the elevator $e^f_{t,k-b}$
 we get 
$$\HH^m_{k-b,t,t}\xrightarrow{\ M_W\ } 
(\HH^\vee)^f_{t,k-b,t}\xrightarrow{\ \tau^{-1}\ }
(\HH^\vee)^m_{t,k-b,t}\xrightarrow{\ e^m_{t,k-b}\ }
(\HH^\vee)^m_{t,k-b,k-b}.$$
Therefore we have
\begin{equation}\label{eq:movingMS}
\left[\Hcal_{W,H,s^b}\left(\textstyle\frac{b}{k}\right)\right]^{p,q}_{\chi_s=t}\cong 
\left[\Hcal_{W^\vee,H^\vee,s^t}\left(\textstyle\frac{k-t}{k}\right)\right]^{n-p,q}_{\chi_s=k-b}.\end{equation}
Notice that the map on the bidegrees is the composite of \begin{enumerate}
\item a shift
$(p,q)\mapsto (p+b/k,q+b/k)$, 
\item mirror symmetry  
$(p,q)\mapsto (n+1-p,q)$, \item 
$\tau^{-1}$ yielding 
$(p,q)\mapsto (p+1-2t/k,q)$, 
\item the elevator 
yielding $(p,q)\mapsto (p+(b-(k-t))/k,q+(b+(k-t))/k)$
\item a shift backwards $(p,q)\mapsto (p-(k-t)/k,q-(k-t)/k)$,
\end{enumerate}
inducing $(p,q)\mapsto (n-p,q)$.

In the following statement we apply to \eqref{eq:untwistedMS}, 
\eqref{eq:stwistedMS}, and \eqref{eq:movingMS} to the 
geometric interpretation \eqref{eq:statespace_alg_geom} 
of the Landau--Ginzburg state space 
in terms of relative cohomology of $(\VV_H,\FF_{W,H})$ provided in 
\S\ref{sect:Jacobiring}. 

Let $W=x_0^k+f(x_1,\dots,x_n)$ be a
 quasi-homogeneous non-degenerate polynomial 
 of degree $d$ and weights $w_1,\dots,w_n$. 
Assume $j_W\in H\subseteq \SL_W$ 
(in particular $\sum_j w_j$ is a positive multiple of $d\NN$).
Then $W$ descends to 
$\VV_{H}=[\VV/H_0]\to \CC$ and its generic fibre is $\FF_{W,H}$.
Consider the automorphism  $s=[\frac1k,0,\dots,0]\colon \VV_H\to \VV_H$,
the orbifold cohomology groups $H_{\id}^*(\VV_H,\FF_{W,H})$
and $H_{s}^*(\VV_H,\FF_{W,H})$.

For $0<i<k$, define $\overline{H}^*_{\id,i}(\VV_{H},\FF_{W,H})$ to be the bigraded vector space
\[\bigoplus_{j<k-i}\left[H^*_{\id}(\VV_{H},\FF_{W,H})\left(1,0\right)\right]_{\chi_s=j} \oplus \bigoplus_{j>k-i}\left[H^*_{\id}(\VV_{H},\FF_{W,H})\left(0,1\right)\right]_{\chi_s=j};\]
here $j \in \{1,\dots,k-1\}.$
This is the padding needed to state the mirror theorem. 

\begin{thm}[mirror theorem for Landau--Ginzburg models]\label{thm:MS_LG}
Let $W=x_0^k+f(x_1,\dots,x_n)$ be a
 quasi-homogeneous, non-degenerate, invertible polynomial and $H$ a group of symmetries 
 satisfying $j_W\in H\subseteq \SL_W$. 
As above, the polynomial $W$ descends to 
$\VV_{H}=[\VV/H_0]\to \CC$ and its generic fibre is $\FF_{W,H}$.

Then, for $b$ and $t\neq 0$, 
we have 
\begin{enumerate}
\item $H^{p,q}_{\id}(\VV_{H},\FF_{W,H})_{\chi_s=0}\cong 
\bigoplus_{i=1}^{k-1} H^{n+1-p,q}_{\id}(\VV_{H^\vee},\FF_{W^\vee,H^\vee})_{\chi_s=i};$
\item Let $\FF:=\FF_{W,H}$ and $\FF^\vee:=\FF_{W^\vee,H^\vee}$. 
 
 For $0<i<k$,  
\[ \left[H^{p,q}_{s^i}(\VV_H,\FF)\left(\tfrac{i}{k} \right)\right]^s \oplus \overline{H}^{p,q}_{\id,i}(\VV_H,\FF) \cong
\left[H^{n-p,q}_{s^i}(\VV_{H^\vee},\FF^\vee)\left(\tfrac{i}{k}\right)\right]^s \oplus \overline{H}_{\id,i}^{n-p,q}(\VV_{H^\vee},\FF^\vee) ;\]

\item $H^{p,q}_{s^b}(\VV_{H},\FF_{W,H})\left(\frac{b}k\right)_{\chi_s=t}\cong 
H^{n-p,q}_{s^{-t}}(\VV_{H^\vee},\FF_{W^\vee,H^\vee})
\left(\frac{k-t}k\right)_{\chi_s=-b}.$
\end{enumerate}
\end{thm}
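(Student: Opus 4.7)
The plan is to observe that all three parts of the theorem follow by translating into geometric language the three displayed isomorphisms \eqref{eq:untwistedMS}, \eqref{eq:stwistedMS}, \eqref{eq:movingMS} already produced in this section at the level of the Landau--Ginzburg state space $\Hcal_{W,H,g}$. The bridge is the identification
\[\Hcal_{W,H,g}^{p,q}\cong H_g^{p,q}(\VV_H,\FF_{W,H})\]
from \eqref{eq:statespace_alg_geom}. Since the Jacobi-ring description of $\Hcal$ is manifestly functorial in diagonal symmetries, this identification is equivariant under the $\pmmu_k$-action induced by $s$, and in particular intertwines eigenspace decompositions with respect to the character $\chi_s$.

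Granting this, part (1) is nothing but \eqref{eq:untwistedMS} read through \eqref{eq:statespace_alg_geom}, and part (3) is likewise a direct rewriting of \eqref{eq:movingMS}: the composite $e^m_{t,k-b}\circ \tau^{-1}\circ M_W$ studied in the derivation of \eqref{eq:movingMS} is already checked to send $(p,q)$-classes to $(n-p,q)$-classes with the displayed Tate shifts $(\tfrac{b}{k})$ and $(\tfrac{k-t}{k})$ and the announced exchange $\chi_s=t\leftrightarrow \chi_s=-b$, so no further work is required.

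For part (2) the key observation is that the auxiliary space $\overline{H}^{p,q}_{\id,i}(\VV_H,\FF_{W,H})$ is defined exactly so as to correspond, on the geometric side, to the image of the elevator map $el_i$ of \eqref{eq:elevator-restriction}. Indeed, applying part (1) of the theorem (equivalently \eqref{eq:untwistedMS}) to each summand in the image of $el_i^\vee$ and to each summand in the image of $el_i$ converts the algebraic padding into a sum of $\chi_s$-eigenspaces of $H^*_{\id}(\VV_H,\FF_{W,H})$ with appropriate Tate twists $(1,0)$ or $(0,1)$; this is precisely the definition of $\overline{H}^{*}_{\id,i}$. Combining this with the twist $\tau$ of Proposition \ref{pro:twist} and mirror symmetry for the pair $\Hbar^f_{0,0,i}\cong (\Hbar^\vee)^m_{0,0,i}$ (Proposition \ref{pro:mirrorcharges}) reproduces \eqref{eq:stwistedMS}, which via \eqref{eq:statespace_alg_geom} is the statement of part (2).

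The main difficulty is not conceptual but bookkeeping: one must carefully compose the bidegree shifts coming from the five operations in turn—the Tate shift $(\tfrac{b}{k})$, the mirror map $M_W$ sending $(p,q)\mapsto (n+1-p,q)$, the twist $\tau^{-1}$ sending $(p,q)\mapsto (p+1-\tfrac{2t}{k},q)$, the elevator $e^m_{t,k-b}$, and a final compensating Tate shift—and verify that their composite produces $(n-p,q)$ and that the degrees line up with those declared for $\overline{H}^{p,q}_{\id,i}$ in part (2). Once this bookkeeping is done the theorem reduces to the already-established algebraic identities.
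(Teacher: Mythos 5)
Your proposal is correct and takes essentially the same approach as the paper, whose entire proof is the one-line translation of \eqref{eq:untwistedMS}, \eqref{eq:stwistedMS}, and \eqref{eq:movingMS} through the identification $\Hcal_{W,H,g}^{p,q}=H_g^{p,q}(\VV_H,\FF_{W,H})$ of \eqref{eq:statespace_alg_geom}. The only point the paper makes explicit that you leave implicit is that any $H$ with $j_W\in H\subseteq \SL_W$ can be written as $K[j_W]$ for a suitable $K\subseteq\SL_f$ containing $j_W^k$, which is what licenses applying the preceding identities (derived for groups of the form $K[j_W]$) to the given $H$.
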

\begin{proof}
Since $H$ equals $K[j_W]$ for a suitable $K\subseteq \SL_f$ containing $j_W^k$,
we can conclude via $\Hcal _{P,H,g}^{p,q} =H_{g}^{p,q}(\VV_H;\FF_{P,H})$. 
\end{proof}
\section{Geometric mirror symmetry}\label{sect:geometry}
If $W$ is of Calabi--Yau type,
via the Landau--Ginzburg/Calabi--Yau correspondence  of Theorem \ref{thm:lgcy}
based on $\Phi\colon H^*(\VV;\CC)\to H^*(\LL;\CC)$, we 
provide an equivalent statement on the Calabi--Yau side. 

The existence of the isomorphism $\Phi$ is guaranteed by the Calabi--Yau condition 
(ensuring $K$-equivalence). As before, for $0<i<k$, define $\overline{H}^*_{\id,i}(\Sigma_{W,H})$ to be the bigraded vector space
\begin{equation}\label{eq:movingcycles}\bigoplus_{j<k-i}\left[H^*_{\id}(\Sigma_{W,H})\left(1,0\right)\right]_{\chi_s=j} \oplus \bigoplus_{j>k-i}\left[H^*_{\id}(\Sigma_{W,H})\left(0,1\right)\right]_{\chi_s=j};\end{equation}
where again, $j$ runs between $1$ and $k-1$. 
Then we have the following statement.
\bigskip
\begin{thm}[mirror theorem for CY orbifolds with automorphism $s$]\label{thm:MS_CY}
Let $W=x_0^k+f(x_1,\dots,x_n)$ be a
be a quasi-homogeneous, non-degenerate, 
 invertible, Calabi--Yau polynomial and $H$ a group of symmetries 
 satisfying $j_W\in H\subseteq \SL_W$.  Let $\Sigma=\Sigma_{W,H}$ and $\Sigma^\vee=\Sigma_{W^\vee,H^\vee}.$
Then the following holds for $b,t\neq 0$.  
\begin{enumerate}
\item Let $d=n-1$. Then $H^{p,q}_{\id}(\Sigma)_{\chi_s=0}\cong 
\bigoplus_{i=1}^{k-1} H^{d-p,q}_{\id}(\Sigma^\vee)_{\chi_s=i};$
\item Let $d=n-2$. For $0<i<k$,
\begin{equation*}
\left[H^{p,q}_{s^i}(\Sigma)\left(\tfrac{i}{k}\right)\right]^s \oplus \overline{H}^{p,q}_{\id,i}(\Sigma)\cong \left[H^{d-p,q}_{s^i}(\Sigma^\vee)\left(\tfrac{i}{k}\right)\right]^s \oplus \overline{H}_{\id,i}^{d-p,q}(\Sigma^\vee);\end{equation*}
\item Let $d=n-2$. Then $H^{p,q}_{s^b}(\Sigma)_{\chi_s=t}\left(\frac{b}k\right)\cong 
H^{d-p,q}_{s^{-t}}(\Sigma^\vee)\left(\frac{k-t}k\right)_{\chi_s=-b}.$
\end{enumerate}
\end{thm}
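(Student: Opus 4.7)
The plan is to deduce Theorem \ref{thm:MS_CY} from the Landau--Ginzburg mirror theorem (Theorem \ref{thm:MS_LG}) by transporting the three isomorphisms along the Landau--Ginzburg/Calabi--Yau correspondence. The key input is an $s^b$-equivariant refinement of Theorem \ref{thm:lgcy}: for every $b\in\ZZ/k$ there should be a bidegree-preserving isomorphism
\[\Phi_{s^b}\colon H^{p,q}_{s^b}(\Sigma_{W,H};\CC)(-1)\xrightarrow{\ \sim\ } H^{p,q}_{s^b}(\VV_H,\FF_{W,H};\CC)=\Hcal_{W,H,s^b}^{p,q},\]
which in addition respects the $\chi_s$-eigenspace decomposition. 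Once this is in place, each statement (1)--(3) becomes a direct translation of the corresponding statement of Theorem \ref{thm:MS_LG}.

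The first step is to produce $\Phi_{s^b}$. This is obtained by running the two ingredients already used in Section \ref{LG} sector-by-sector on the $s^b$-inertia stack: the $K$-equivalence between $\VV_H$ and $\LL_H$ (which, by the argument of \S\ref{subsect:Kequivalence} and Yasuda \cite{Yasuda}, induces $H^{p,q}_{s^b}(\VV_H,\FF_{W,H})\cong H^{p,q}_{s^b}(\LL_H,\FF_{W,H})$, compatibly with the restrictions to $\FF$), followed by the orbifold Thom isomorphism of \cite{ChiodoNagel} (which yields $H^{p,q}_{s^b}(\LL_H,\FF_{W,H})\cong H^{p,q}_{s^b}(\Sigma_{W,H})(-1)$). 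Both constructions make sense $s^b$-sector by $s^b$-sector because $s$ commutes with $H_0$ and preserves the total spaces $\VV_H$, $\LL_H$, the zero section, and the generic Milnor fibre $\FF_{W,H}$, so each of these inertia stacks carries the residual $\langle s\rangle$-action and the isomorphisms are built from data internal to $H$, hence commute with it.

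The second step is the translation itself. The bidegree shift $(-1,-1)$ on each side of $\Phi$ accounts for the discrepancies between the LG dimension $n$ appearing in Theorem \ref{thm:MS_LG} and the geometric $d$ in Theorem \ref{thm:MS_CY}: Part (1) of the LG statement has codegree $n+1-p$ on the right and codegree $p$ on the left, so after applying $\Phi^{-1}$ and $\Phi^{-1}{}^\vee$ the right-hand codegree becomes $n+1-p-1=n-p=d-(p-1)$ after reindexing, yielding $d=n-1$. Parts (2) and (3) involve LG sectors on both sides, so subtracting $1$ from each side produces the extra loss of one dimension and gives $d=n-2$; the Tate twists by $\frac{b}{k}$ and $\frac{k-t}{k}$, as well as the $\chi_s$-eigenspace decorations, are preserved verbatim by $\Phi$.

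Third, for Part (2) one identifies the correction term $\overline{H}^{p,q}_{\id,i}(\VV_H,\FF_{W,H})$ with $\overline{H}^{p,q}_{\id,i}(\Sigma_{W,H})$; since both are defined from the $\id$-sector orbifold cohomology shifted by $(1,0)$ or $(0,1)$ and $\Phi_{\id}$ is bidegree-preserving on the $\chi_s$-eigenspaces, the correspondence is term by term. The main obstacle is the first step: although philosophically both the $K$-equivalence and the orbifold Thom isomorphism are canonical and hence $s$-equivariant, a rigorous argument requires fixing, for each connected component of $I_{s^b}$, compatible choices of resolutions and of Thom classes and chasing the age shift $\mf a_{s^b}$ through the long exact sequences in \S\ref{subsect:Kequivalence}. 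After this bookkeeping the final formulae follow directly from substitution into \eqref{eq:untwistedMS}, \eqref{eq:stwistedMS}, and \eqref{eq:movingMS}.
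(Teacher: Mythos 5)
Your proposal is correct and follows essentially the same route as the paper: the paper's proof of Theorem \ref{thm:MS_CY} is precisely the one-line deduction from Theorem \ref{thm:MS_LG} via the LG/CY correspondence, and the $s^b$-equivariant isomorphism $\Phi_{s^b}$ you set out to construct is already available as Theorem \ref{thm:lgcy} applied with $g=s^b$ (the $K$-equivalence \eqref{eq:LGCY_Kequiv} and the orbifold Thom isomorphism \eqref{eq:OrbiThom} are both stated there for arbitrary $g\in\Aut_W$), so your first step reproduces Section \ref{LG} rather than adding new content. Your remark that one must also check compatibility of $\Phi$ with the $\chi_s$-eigenspace decomposition is a legitimate point of care that the paper leaves implicit, and your degree bookkeeping ($d=n-1$ versus $d=n-2$) matches the paper's.
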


\begin{proof} This follows immediately from Theorem \ref{thm:MS_LG} and the LG/CY correspondence. 
\end{proof}
\begin{rem} In the theorem, $d$ denotes the maximum of the dimensions of the components of the inertia stack considered in each case. 
\end{rem}
For $k=2$, the second equation of the statement of Theorem \ref{thm:MS_CY} can be stated as a mirror symmetry statement involving the cohomology 
groups $H^{*}_s$. 
Notice that the first statement says that Berglund--H\"ubsch mirror symmetry exchanges 
invariant  $(p,q)$-classes for $\Sigma_{W,H}$ and  anti-invariant
$(n-1-p,q)$-classes of $\Sigma_{W^\vee,H^\vee}$ (and vice versa).
Finally the third statement is trivial because both sides vanish by Proposition \ref{pro:fixedLGdontmove}. 
In this way we recover the main theorem of \cite{CKV}.
\begin{cor}
Let $W=x_0^2+f(x_1,\dots,x_n)$ be a
be a quasi-homogeneous, non-degenerate, 
 invertible, Calabi--Yau polynomial and $H$ a group of symmetries 
 satisfying $j_W\in H\subseteq \SL_W$. 
Then,
we have 
\begin{align*}
H^{p.q}_{\id}(\Sigma_{W,H})^{\pm}&\cong H^{n-1-p,q}_{\id}(\Sigma_{W^\vee,H^\vee})^\mp;\\H^{p,q}_s(\Sigma_{W,H})\textstyle{\left(\frac{1}2\right)}&\cong 
H^{n-2-p,q}_s(\Sigma_{W^\vee,H^\vee})\textstyle{\left(\frac{1}2\right)}.\end{align*}
\end{cor}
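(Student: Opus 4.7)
The plan is to derive both displayed isomorphisms as direct specializations of Theorem \ref{thm:MS_CY} to the case $k=2$, exploiting two simplifications peculiar to order-two automorphisms: the character group of $\langle s\rangle$ reduces to $\{0,1\}$, and any involution acts trivially on its own fixed locus.

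First I would treat the $\id$-sector isomorphism by invoking Theorem \ref{thm:MS_CY}(1) with $k=2$. Since $\ZZ/2$ has only the characters $\chi_s=0$ and $\chi_s=1$, the cohomology $H^*_\id(\Sigma_{W,H})$ splits as $H^*_\id(\Sigma_{W,H})^+\oplus H^*_\id(\Sigma_{W,H})^-$, matching the $\chi_s=0$ and $\chi_s=1$ pieces respectively. The sum over $i=1,\dots,k-1$ in part (1) collapses to a single term, yielding directly $H^{p,q}_\id(\Sigma_{W,H})^+\cong H^{n-1-p,q}_\id(\Sigma_{W^\vee,H^\vee})^-$; the opposite sign choice follows by interchanging the roles of $(W,H)$ and $(W^\vee,H^\vee)$, since Berglund--H\"ubsch duality is an involution.

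Next I would handle the $s$-sector isomorphism by specializing Theorem \ref{thm:MS_CY}(2) to $k=2$ and $i=1$, so that $d=n-2$ and the Tate shift is $(1/2)$. The key point is that the correction $\overline H^{p,q}_{\id,1}(\Sigma_{W,H})$ defined in \eqref{eq:movingcycles} vanishes: its summation index $j$ ranges over $\{1,\dots,k-1\}=\{1\}$ while $k-i=1$, so neither $j<k-i$ nor $j>k-i$ is satisfied and both summands are empty. To finish I would remove the exterior $[\cdot]^s$ invariance by observing that when $s$ has order two the induced action $F_s$ on $I_s(\Sigma_{W,H})$ described in Section \ref{inertia} is the identity (the first factor of \eqref{eq:g_inertiastack} is pointwise $s$-fixed), so $[H^{p,q}_s(\Sigma_{W,H})]^s=H^{p,q}_s(\Sigma_{W,H})$. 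Plugging in yields the second equation. The remaining case, Theorem \ref{thm:MS_CY}(3), forces $b=t=1$ and contributes nothing new since both sides vanish by Proposition \ref{pro:fixedLGdontmove} transported across $\Phi$ of Theorem \ref{thm:lgcy}.

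The main obstacle, though minor, will be the bookkeeping around the padding $\overline H^{p,q}_{\id,i}$ and the Tate shifts: one must verify that the shifts $(1,0)$ and $(0,1)$ appearing in the definition of $\overline H$ do not interact spuriously with the Tate twist $(1/k)=(1/2)$. Since the padding is empty before any shift is applied, this is not a genuine difficulty, and all the substantive mirror-symmetry content is already encoded in Theorem \ref{thm:MS_CY}.
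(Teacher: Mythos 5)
Your proposal is correct and follows essentially the same route as the paper: the corollary is obtained by specializing Theorem \ref{thm:MS_CY} to $k=2$, noting that the sum in part (1) collapses to the single anti-invariant term, that the padding $\overline{H}^{*}_{\id,1}$ is empty since $j=1$ satisfies neither $j<k-i=1$ nor $j>1$, that $s$ acts trivially on its own sector so $[\cdot]^s$ may be dropped, and that part (3) is vacuous by Proposition \ref{pro:fixedLGdontmove}. No gaps.
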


\begin{exa}\label{ex:elliptic}
Let us consider  $E=(x^6+y^3+z^2=0)$ 
within $\PP(1,2,3)$ with its order-$6$ symmetry $s=[\frac16,0,0]$. 
In this case the ``Calabi--Yau orbifold'' is represented by an elliptic curve. 
The cohomology groups $H_{s^b}^*$ describe the cohomology of the 
$s^b$-fixed loci $E_b$, shifted by $(\frac{b}6,\frac{b}6)$.
Furthermore, the mirror of $E$ coincides with $E$,  because the defining equation is of
Fermat type and $J$ equals $\SL$ (the order of $\SL$ is $w_xw_yw_z/\deg$ and equals the order $\deg$ of $J$). 
This example allows us to test $\HH$ as a state space 
computing the cohomology of $E$, and the cohomology of its fixed spaces 
satisfying $E_1=E_{2}\cap E_{3}$ and $E_{2}=E_{4}$. 
Since $E$ is the elliptic curve with order-$6$ complex multiplication, $E_1$ is the origin and the fixed spaces  $E_{3}(=E[2])$ and $E_{2}$ are respectively a set of $4$ points and 3 points interecting at the origin. Clearly $E_{3}\setminus E_1$ is the unique 
order-$3$ orbit and 
$E_{2}\setminus E_1$ is the unique order-$2$ orbit.

The $b$th  row in the table below represents the ranks of contributions of $\HH[d_s=\frac{b}6]$ whereas the 
$a$th column represents the contributions to the state space of $\HH[d_j=\frac{a}6]$.
Notice that, by means of the elevators, all rows are identical except for the anti-diagonal 
entries of the form $\HH[d_j+d_s=0]$, which we underlined. 
\begin{center}
\begin{tabular}{|c|c|c|c|c|c|c|}
\hline
$\dim(H_{\id})=$& \ul{2} & 1 & 0 & 0 & 0 & 1 \\
\hline
$\dim(H_s)=$&0 & 1 & 0 & 0 & 0 & \ul{0} \\
\hline
$\dim(H_{s^2})=$&0 & 1 & 0 & 0 & \ul{1} & 1 \\
\hline
$\dim(H_{s^3})=$&0 & 1 & 0 & \ul{2} & 0 & 1 \\
\hline
$\dim(H_{s^4})=$&0 & 1 & \ul{1} & 0 & 0 & 1 \\
\hline
$\dim(H_{s^5})=$&0 & \ul{0} & 0 & 0 & 0 & 1 \\
\hline
\end{tabular}
\end{center}
The $0$th row is the $4$-dimensional cohomology of the elliptic curve $E$ organised in its 
$2$-dimensional primitive part (spanned by the forms $dx\wedge dy\wedge dz$
and $x^4ydx\wedge dy\wedge dz$)
and its $2$-dimensional ambient part arising in the state space $\HH[d_s=0, d_j=a/6]$ for $a=1$ 
and $a=5$
($j$ and $j^5$ correspond to the only narrow sectors of the state space, \emph{i.e.} the 
only powers of $[\frac16,\frac13,\frac12]$ fixing only the origin). 
On the row corresponding to $H^*_s$ there is a single contribution for $d_j=1/5$. This happens because $E_1$ is a point. Furthermore 
$\HH[d_j=5/6, d_s=1/6]=\HH[d_j=1/6, d_s=5/6]$ vanish 
by Proposition \ref{pro:fixedLGdontmove}. 
The remaining anti-diagonal terms are 
$\HH[d_j=4/6, d_s=2/6]=\HH[d_j=2/6, d_s=4/6]=
\langle x^2dx\wedge dz\rangle$ and 
$\HH[d_j=3/3, d_s=3/3]=\langle xydx\wedge dy,x^3dx\wedge dy\rangle$.

The above mirror symmetry statement (1) involves the first row and claims that 
all fixed cohomology classes appearing for $d_j=\frac16,\dots,\frac56$ match 
the classes of $\HH[d_j=0, d_s=0]$; we already noticed that this identifies two $2$-dimensional spaces of ambient and primitive cohomology. 
Statement (2), for $i=1$, says that the 1 dimensional space
$\HH[d_s=0, q_j=1,2,3,4]$ (spanned by the class $dx\wedge dy\wedge dz$) matches the cohomology class spanned by $x^4 dx \wedge dy \wedge dz$.

Statement (3) is a map $M\colon xydx\wedge dy\mapsto x^2dx\wedge dz\in \HH[d_j=\frac26, d_s=\frac46]$
and a map $M\colon x^3dx\wedge dy\mapsto x^2dx\wedge dz\in \HH[d_j=\frac46, d_s=\frac26]$.
In this way $$M\colon \HH\left[d_j=\frac36, d_s=\frac36\right]\xrightarrow {\ \ \ \cong\ \ \ }\HH\left[d_j=\frac26, d_s=\frac46\right]\oplus \HH\left[d_j=\frac46, d_s=\frac26\right].$$
In geometric terms mirror symmetry matches the order-$2$ orbit to the order-$3$ orbit. More 
precisely, the mirror statement (3) claims that there are as many eigenvectors 
of eigenvalue $(\chi_6)^2$ and $(\chi_6)^4$ in the cohomology of $E_{3}$ as 
eigenvectors 
of eigenvalue $(\chi_6)^3$ in the cohomology of $E_{2}$ and of 
$E_{4}$.
\end{exa}

\begin{exa}
We consider the genus-$3$ curve $C$ defined by the degree-$4$ Fermat quartic 
$x_1^4+x_2^4+x_3^4=0$ in $\PP^2$. The $4$-fold cover of $\PP^2$ ramified on $C$ 
is a K3 surface defined as the vanishing locus of the polynomial $W=x_0^4+x_1^4+x_2^4+x_3^4$. In this example, the Calabi--Yau orbifold $\Sigma_{W}$  is again representable and we can treat the cohomologies $H_{\id}^*$ and $H_s^*$ as ordinary 
cohomologies of the K3 surface and of the ramification locus. As in the previous example, we display the cohomological data in a table. 
The $b$th  row in the table below represents the ranks of contributions of $\HH[d_s=\frac{b}4]$ whereas the 
$a$th column represents the contributions to the state space of $\HH[d_j=\frac{a}4]$.

\begin{center}
\begin{tabular}{|c|c|c|c|c|} 
\hline
$\dim(H_{\id})=$& \begin{tikzpicture}[scale=0.4] \tiny
\node at (1,2) {0};
\node at (1,1) {\color{red}{6}\color{black}{+}\color{blue}{7}\color{black}{+}\color{green}{6}};
\node at (-0.5,1) {\color{red}{1}};
\node at (2.5,1) {\color{green}{1}};
\node at (1,0) {0};
\end{tikzpicture} & \begin{tikzpicture}[scale=0.4]\tiny
\node at (1,2) {0};
\node at (1,1) {0};
\node at (0,1) {0};
\node at (2,1) {0};
\node at (1,0) {1};
\end{tikzpicture} & 
\begin{tikzpicture}[scale=0.4]\tiny
\node at (1,2) {0};
\node at (1,1) {1};
\node at (0,1) {0};
\node at (2,1) {0};
\node at (1,0) {0};
\end{tikzpicture} & \begin{tikzpicture}[scale=0.4]\tiny
\node at (1,2) {1};
\node at (1,1) {0};
\node at (0,1) {0};
\node at (2,1) {0};
\node at (1,0) {0};
\end{tikzpicture}  \\
\hline
$\dim(H_s)=$& \begin{tikzpicture}[scale=0.4]\tiny
\node at (0,0) {0};
\node at (0,1) {0};
\node at (-0.5,0.5) {3};
\node at (0.5,0.5) {3};
\end{tikzpicture} & \begin{tikzpicture}[scale=0.4]\tiny
\node at (0,0) {1};
\node at (0,1) {0};
\node at (-0.5,0.5) {0};
\node at (0.5,0.5) {0};
\end{tikzpicture} & \begin{tikzpicture}[scale=0.4]\tiny
\node at (0,0) {0};
\node at (0,1) {1};
\node at (-0.5,0.5) {0};
\node at (0.5,0.5) {0};
\end{tikzpicture} & \begin{tikzpicture}[scale=0.4]\tiny
\node at (0,0) {0};
\node at (0,1) {0};
\node at (-0.5,0.5) {0};
\node at (0.5,0.5) {0}; \end{tikzpicture} \\
\hline
$\dim(H_{s^2})=$& \begin{tikzpicture}[scale=0.4]\tiny
\node at (0,0) {0};
\node at (0,1) {0};
\node at (-0.5,0.5) {3};
\node at (0.5,0.5) {3};
\end{tikzpicture} & \begin{tikzpicture}[scale=0.4]\tiny
\node at (0,0) {1};
\node at (0,1) {0};
\node at (-0.5,0.5) {0};
\node at (0.5,0.5) {0};
\end{tikzpicture} & \begin{tikzpicture}[scale=0.4]\tiny
\node at (0,0) {0};
\node at (0,1) {0};
\node at (-0.5,0.5) {0};
\node at (0.5,0.5) {0};
\end{tikzpicture} & \begin{tikzpicture}[scale=0.4]\tiny
\node at (0,0) {0};
\node at (0,1) {1};
\node at (-0.5,0.5) {0};
\node at (0.5,0.5) {0}; \end{tikzpicture} \\
\hline
$\dim(H_{s^3})=$& \begin{tikzpicture}[scale=0.4]\tiny
\node at (0,0) {0};
\node at (0,1) {0};
\node at (-0.5,0.5) {3};
\node at (0.5,0.5) {3};
\end{tikzpicture} & \begin{tikzpicture}[scale=0.4]\tiny
\node at (0,0) {1};
\node at (0,1) {0};
\node at (-0.5,0.5) {0};
\node at (0.5,0.5) {0};
\end{tikzpicture} & \begin{tikzpicture}[scale=0.4]\tiny
\node at (0,0) {1};
\node at (0,1) {0};
\node at (-0.5,0.5) {0};
\node at (0.5,0.5) {0};
\end{tikzpicture} & \begin{tikzpicture}[scale=0.4]\tiny
\node at (0,0) {0};
\node at (0,1) {1};
\node at (-0.5,0.5) {0};
\node at (0.5,0.5) {0}; \end{tikzpicture}\\
\hline

\end{tabular}
\end{center}
The colors in the table refer to the weight of $s$: cohomology in red has character $1$, blue has character $2$, and green has character $3$. 
Statement (2) involves, on one side, the cohomology of the curve $C$ ($H_s$) and the moving cohomology of the K3 surface with weights $1$ and $2$.
The total cohomology on one side of Statement (2) is thus
\[ \begin{tikzpicture}\small
\node at (0,0) {1};
\node at (0,1) {1};
\node at (-0.5,0.5) {3+1};
\node at (0.5,0.5) {3+13};
\end{tikzpicture}.\]
We notice that the only $\SL$-invariant broad cohomology 
classes in the entire unprojected state space $U(W)$ are contained in 
$U(W)_{\id}$; this implies  $U(W)_{sg}^{\SL}=0$. 
Hence $ H^*_{\mathrm{prim},s}$ vanishes. 
One can compute the mirror table as

\begin{center}
\begin{tabular}{|c|c|c|c|c|} 
\hline
$\dim(H_{\id})=$& \begin{tikzpicture}[scale=0.4] \tiny
\node at (1,2) {0};
\node at (1,1) {\color{red}{0}\color{black}{+}\color{blue}{1}\color{black}{+}\color{green}{1}};
\node at (-0.5,1) {\color{red}{1}};
\node at (2.5,1) {\color{green}{1}};
\node at (1,0) {0};
\end{tikzpicture} & \begin{tikzpicture}[scale=0.4]\tiny
\node at (1,2) {0};
\node at (1,1) {6};
\node at (0,1) {0};
\node at (2,1) {0};
\node at (1,0) {1};
\end{tikzpicture} & 
\begin{tikzpicture}[scale=0.4]\tiny
\node at (1,2) {0};
\node at (1,1) {7};
\node at (0,1) {0};
\node at (2,1) {0};
\node at (1,0) {0};
\end{tikzpicture} & \begin{tikzpicture}[scale=0.4]\tiny
\node at (1,2) {1};
\node at (1,1) {6};
\node at (0,1) {0};
\node at (2,1) {0};
\node at (1,0) {0};
\end{tikzpicture}  \\
\hline
$\dim(H_s)=$& \begin{tikzpicture}[scale=0.4]\tiny
\node at (0,0) {3};
\node at (0,1) {3};
\node at (-0.5,0.5) {0};
\node at (0.5,0.5) {0};
\end{tikzpicture} & \begin{tikzpicture}[scale=0.4]\tiny
\node at (0,0) {1};
\node at (0,1) {6};
\node at (-0.5,0.5) {0};
\node at (0.5,0.5) {0};
\end{tikzpicture} & \begin{tikzpicture}[scale=0.4]\tiny
\node at (0,0) {0};
\node at (0,1) {7};
\node at (-0.5,0.5) {0};
\node at (0.5,0.5) {0};
\end{tikzpicture} & \begin{tikzpicture}[scale=0.4]\tiny
\node at (0,0) {0};
\node at (0,1) {0};
\node at (-0.5,0.5) {0};
\node at (0.5,0.5) {0}; \end{tikzpicture} \\
\hline
$\dim(H_{s^2})=$& \begin{tikzpicture}[scale=0.4]\tiny
\node at (0,0) {3};
\node at (0,1) {3};
\node at (-0.5,0.5) {0};
\node at (0.5,0.5) {0};
\end{tikzpicture} & \begin{tikzpicture}[scale=0.4]\tiny
\node at (0,0) {1};
\node at (0,1) {6};
\node at (-0.5,0.5) {0};
\node at (0.5,0.5) {0};
\end{tikzpicture} & \begin{tikzpicture}[scale=0.4]\tiny
\node at (0,0) {0};
\node at (0,1) {0};
\node at (-0.5,0.5) {0};
\node at (0.5,0.5) {0};
\end{tikzpicture} & \begin{tikzpicture}[scale=0.4]\tiny
\node at (0,0) {6};
\node at (0,1) {1};
\node at (-0.5,0.5) {0};
\node at (0.5,0.5) {0}; \end{tikzpicture} \\
\hline
$\dim(H_{s^3})=$& \begin{tikzpicture}[scale=0.4]\tiny
\node at (0,0) {3};
\node at (0,1) {3};
\node at (-0.5,0.5) {0};
\node at (0.5,0.5) {0};
\end{tikzpicture} & \begin{tikzpicture}[scale=0.4]\tiny
\node at (0,0) {0};
\node at (0,1) {0};
\node at (-0.5,0.5) {0};
\node at (0.5,0.5) {0};
\end{tikzpicture} & \begin{tikzpicture}[scale=0.4]\tiny
\node at (0,0) {7};
\node at (0,1) {0};
\node at (-0.5,0.5) {0};
\node at (0.5,0.5) {0};
\end{tikzpicture} & \begin{tikzpicture}[scale=0.4]\tiny
\node at (0,0) {6};
\node at (0,1) {1};
\node at (-0.5,0.5) {0};
\node at (0.5,0.5) {0}; \end{tikzpicture}\\
\hline

\end{tabular}
\end{center}
From this, we see that the mirror $s$-fixed locus is four projective curves and 12 isolated fixed points. The mirror Hodge diamond for Statement (2) is
\[ \begin{tikzpicture}\small
\node at (0,0) {4};
\node at (0,1) {16};
\node at (-0.5,0.5) {0+1};
\node at (0.5,0.5) {0+1};
\end{tikzpicture}.\]
Note that despite being an order 2 automorphism, the Hodge diamonds of the fixed loci of $s^2$ do not mirror each other. 

Clearly mirror symmetry should also yield 
a relation between the quantum invariants of the primitive classes 
of the curve and the orbifold quantum invariants of these sectors. 
\end{exa}


The structure of the of the above example is shared by all K3 orbifolds of this type with order 4 automorphism. Combining the mirror theorem with the fact that $s$ and $s^3$ have the same fixed locus (and hence cohomology of the same dimension), we can see that for any $W=x_0^4+f(x_1,x_2,x_3)$ and group $G$, the table for $\Sigma_{W,G}$ is given by 

\begin{center}
\begin{tabular}{|c|c|c|c|c|} 
\hline
$\dim(H_{\id})=$& \begin{tikzpicture}[scale=0.4] \tiny
\node at (1,2) {0};
\node at (1,1) {\color{red}{$a$}\color{red}{$-$}\color{red}{$1$}\color{black}{+}\color{blue}{$b$}\color{black}{+}\color{green}{$a$}\color{green}{$-$}\color{green}{$1$}};
\node at (-1,1) {\color{red}{1}};
\node at (3,1) {\color{green}{1}};
\node at (1,0) {0};
\end{tikzpicture} & \begin{tikzpicture}[scale=0.4]\tiny
\node at (1,2) {0};
\node at (1,1) {$a\!^{\smallvee}\!\!-\!1$};
\node at (-0.5,1) {0};
\node at (2.5,1) {0};
\node at (1,0) {1};
\end{tikzpicture} & 
\begin{tikzpicture}[scale=0.4]\tiny
\node at (1,2) {0};
\node at (1,1) {$b\!^{\smallvee}$};
\node at (-0.5,1) {0};
\node at (2.5,1) {0};
\node at (1,0) {0};
\end{tikzpicture} & \begin{tikzpicture}[scale=0.4]\tiny
\node at (1,2) {1};
\node at (1,1) {$a\!^{\smallvee}\!\!-\!1$};
\node at (-0.5,1) {0};
\node at (2.5,1) {0};
\node at (1,0) {0};
\end{tikzpicture}  \\
\hline
$\dim(H_s)=$& \begin{tikzpicture}[scale=0.4]\tiny
\node at (0,0) {$g\!^{\smallvee}$};
\node at (0,1) {$g\!^{\smallvee}$};
\node at (-0.5,0.5) {$g$};
\node at (0.5,0.5) {$g$};
\end{tikzpicture} & \begin{tikzpicture}[scale=0.4]\tiny
\node at (0,0) {1};
\node at (0,1) {$a\!^{\smallvee}\!\!-\!1$};
\node at (-0.5,0.5) {0};
\node at (0.5,0.5) {0};
\end{tikzpicture} & \begin{tikzpicture}[scale=0.4]\tiny
\node at (0,0) {0};
\node at (0,1) {$b\!^{\smallvee}$};
\node at (-0.5,0.5) {0};
\node at (0.5,0.5) {0};
\end{tikzpicture} & \begin{tikzpicture}[scale=0.4]\tiny
\node at (0,0) {0};
\node at (0,1) {0};
\node at (-0.5,0.5) {0};
\node at (0.5,0.5) {0}; \end{tikzpicture} \\
\hline
$\dim(H_{s^2})=$& \begin{tikzpicture}[scale=0.4]\tiny
\node at (0,0) {$g\!^{\smallvee}$};
\node at (0,1) {$g\!^{\smallvee}$};
\node at (-0.5,0.5) {$g$};
\node at (0.5,0.5) {$g$};
\end{tikzpicture} & \begin{tikzpicture}[scale=0.4]\tiny
\node at (0,0) {1};
\node at (0,1) {$a\!^{\smallvee}\!\!-\!1$};
\node at (-0.5,0.5) {0};
\node at (0.5,0.5) {0};
\end{tikzpicture} & \begin{tikzpicture}[scale=0.4]\tiny
\node at (0,0) {$c$};
\node at (0,1) {$c$};
\node at (-0.5,0.5) {$c\!^{\smallvee}$};
\node at (0.5,0.5) {$c\!^{\smallvee}$};
\end{tikzpicture} & \begin{tikzpicture}[scale=0.4]\tiny
\node at (0,0) {$a\!^{\smallvee}\!\!-\!1$};
\node at (0,1) {1};
\node at (-0.5,0.5) {0};
\node at (0.5,0.5) {0}; \end{tikzpicture} \\
\hline
$\dim(H_{s^3})=$& \begin{tikzpicture}[scale=0.4]\tiny
\node at (0,0) {$g\!^{\smallvee}$};
\node at (0,1) {$g\!^{\smallvee}$};
\node at (-0.5,0.5) {$g$};
\node at (0.5,0.5) {$g$};
\end{tikzpicture} & \begin{tikzpicture}[scale=0.4]\tiny
\node at (0,0) {0};
\node at (0,1) {0};
\node at (-0.5,0.5) {0};
\node at (0.5,0.5) {0};
\end{tikzpicture} & \begin{tikzpicture}[scale=0.4]\tiny
\node at (0,0) {$b\!^{\smallvee}$};
\node at (0,1) {0};
\node at (-0.5,0.5) {0};
\node at (0.5,0.5) {0};
\end{tikzpicture} & \begin{tikzpicture}[scale=0.4]\tiny
\node at (0,0) {$a\!^{\smallvee}\!\!-\!1$};
\node at (0,1) {1};
\node at (-0.5,0.5) {0};
\node at (0.5,0.5) {0}; \end{tikzpicture}\\
\hline

\end{tabular}.
\end{center}
The table for the $\Sigma_{W^\vee,G^\vee}$ is obtained from this table by replacing $x \mapsto x^\vee.$ 

Using this table, we can find relationships between the topological invariants of the fixed loci of crepant resolutions of $\mf X:=\Sigma_{W,G}$  and its mirror $\mf X^\vee$. Example \ref{eg:dim2yasuda} shows that there is an isomorphism between the $s^{3}$-orbifold cohomology of $\mf X$ and the cohomology of the $s$ fixed locus in the resolution $\wt X$.  Recall that this is because for K3 surfaces, the age function is constant (of $3/4$) on the $s^3$-orbifold cohomology of the resolution. By similar reasoning, the $s^2$-orbifold cohomology of $\wt X$ also has a constant age function (of $1/2$). 

Now consider the following invariants for $i=1,2$:
\begin{itemize}
\item $f_i,$ the number of isolated fixed points of $s^i$;
\item $g_i,$ the sum of the genera of the fixed curves of $s^i$;
\item $N_i,$ the number of curves in the fixed locus of $s^i$. 
\end{itemize}
A superscript $\vee$ indicates the invariants of the mirror K3.  
\begin{cor}\label{cor:fourinv} We have 
\begin{enumerate}
\item $N_1=g_1^\vee+1$;
\item $N_2+g_2+f_1=20-N_2^\vee-g_2^\vee-f_1^\vee.$
\end{enumerate}
\end{cor}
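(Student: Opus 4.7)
The plan is to read the numerical invariants $N_i,g_i,f_i$ off the given table by way of Example~\ref{eg:dim2yasuda}, and then to translate mirror symmetry of the tables into identities among these invariants.

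First, since $s$ acts on the holomorphic two-form of $\wt X$ by $e^{2\pi\cxi(k-1)/k}$ for $k=4$, Example~\ref{eg:dim2yasuda} furnishes a bidegree-preserving isomorphism $H_{s^3}^{p,q}(\mf X)\cong H^{p-3/4,\,q-3/4}(\wt X_s)$ (since $\wt X_s=\wt X_{s^3}$ as fixed loci and the age of $s^3$ is uniformly $3/4$), and likewise $H_{s^2}^{p,q}(\mf X)\cong H^{p-1/2,\,q-1/2}(\wt X_{s^2})$ because $s^2$ is a non-symplectic involution of $\wt X$, which by Nikulin's classification has no isolated fixed points (so $f_2=0$) and a fixed locus that is a disjoint union of smooth curves.

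Next, I would compute the Hodge numbers of each fixed locus: the $s$-fixed locus gives $h^0=N_1+f_1$, $h^1=2g_1$, $h^2=N_1$, while the $s^2$-fixed locus gives $h^0=h^2=N_2$, $h^1=2g_2$. These Hodge numbers appear at prescribed bidegrees in $H_{s^3}^{*,*}(\mf X)$ and $H_{s^2}^{*,*}(\mf X)$, which I would then identify with the four column sums of the corresponding row of the table (one sum per diamond corner). This produces linear formulae of the form
\[
N_1 = g^\vee + 1,\qquad g_1 = g,\qquad f_1 = a^\vee + b^\vee - 2,
\]
and, analogously, expressions for $N_2$ and $g_2$ in terms of $g,g^\vee,a^\vee,c,c^\vee$. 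Statement (1) is immediate from $N_1 = g^\vee + 1$ together with $g_1^\vee = g^\vee$, which comes from applying $g_1 = g$ to the mirror's table (read off by swapping each parameter with its $\vee$-counterpart).

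For statement (2), I would then sum $N_2+g_2+f_1$ with its mirror $N_2^\vee+g_2^\vee+f_1^\vee$. After substituting the formulae obtained above, the total is a linear expression in $a,a^\vee,b,b^\vee,g,g^\vee,c,c^\vee$; applying the Calabi--Yau normalisation $2a+b+2a^\vee+b^\vee = \dim H^*(\wt X) = 24$ (read off by summing all entries of the $H_{\id}$ row of the table, which recovers the Euler number of the K3 surface) collapses the expression to the constant $20$.

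The main technical obstacle is matching each of the four diamond corners in every $d_j$-column to the correct Hodge bidegree on the fixed locus. The critical subtlety is that isolated $s$-fixed points carry a larger age shift than points on a fixed curve, so they land at a different diamond corner than $h^0$ of a smooth curve. Careful use of the two age-shift identifications (Example~\ref{eg:dim2yasuda} applied respectively to $s^3$ and $s^2$) is what pins down where each invariant lives, and allows the linear algebra above to be executed.
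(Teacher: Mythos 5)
Your overall strategy is the same as the paper's: use the age-shift identifications (Example \ref{eg:dim2yasuda} applied to $s^3$ and $s^2$) to read $N_i,g_i,f_i$ off the corners of the diamonds in the order-$4$ table, obtain $N_1=g^\vee+1$, $g_1=g$, $f_1=a^\vee+b^\vee-2$, $N_2=g^\vee+a^\vee+c$, $g_2=g+c^\vee$, pass to the mirror by swapping each parameter with its $\vee$-counterpart, and close with $2a+b+2a^\vee+b^\vee=24$. Part (1) goes through exactly as you describe.

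For part (2), however, the final step you assert does not close. Substituting the formulae above into $N_2+g_2+f_1+N_2^\vee+g_2^\vee+f_1^\vee$ gives
\[
(g^\vee+a^\vee+c)+(g+c^\vee)+(a^\vee+b^\vee-2)+(g+a+c^\vee)+(g^\vee+c)+(a+b-2)
=20+2(g+g^\vee+c+c^\vee),
\]
which is not the constant $20$ unless $g=g^\vee=c=c^\vee=0$. The combination that does collapse is the one with $g_2$ entering with a \emph{minus} sign: $(N_2-g_2^\vee)+(N_2^\vee-g_2)=a+a^\vee$ (this is the identity the paper actually derives, $N_2-g_2^\vee=a^\vee$), and together with $f_1+f_1^\vee=a+a^\vee+b+b^\vee-4$ and the $24$ relation this yields $N_2-g_2+f_1=20-N_2^\vee+g_2^\vee-f_1^\vee$. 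The paper's own Fermat-quartic example confirms this: there $(N_2,g_2,f_1)=(1,3,0)$ and $(N_2^\vee,g_2^\vee,f_1^\vee)=(10,0,12)$, so $N_2+g_2+f_1=4$ while $20-N_2^\vee-g_2^\vee-f_1^\vee=-2$, whereas the minus-sign version gives $-2=-2$. So either the corollary as printed carries a sign typo or your target identity is false as stated; in either case you cannot simply assert that the Calabi--Yau normalisation ``collapses the expression to $20$'' --- carrying out the substitution is precisely the step that reveals the discrepancy, and your write-up skips it.
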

\begin{proof}
The table above implies that $2a+b+2a^\vee+b^\vee=24$, and that $g_1=g,$ $N_1=g^\vee+1$, and $f_1=a^\vee+b^\vee-2$. The statements for the mirror invariants are obtained by $x \leftrightarrow x^\vee$: for example, $N_1^\vee= g+1.$ 
 
Similarly, $N_2-g_2^\vee=(g^\vee+c+a^\vee)-(g^\vee+c)=a^\vee$, which implies the statement. 
\end{proof}

The same analysis also works for K3 surfaces with prime order automorphisms. Let $W$ be a Calabi--Yau polynomial of the form $W=x_0^p+f(x_1,x_2,x_3)$ for $p$ prime. Then the Landau--Ginzburg state space breaks down as
\begin{center} \small
\begin{tabular}{|M{2cm}|M{1.5cm}|M{1.1cm}|M{1.1cm}|M{1.1cm}|M{1.1cm}|M{1.1cm}|}
\hline
$\dim(H_{\id})=$&
\begin{tikzpicture}[scale=0.4]\tiny
\node at (1,2) {0};
\node at (1,1) {$(p\!\!-\!\!1)a\!\!-\!\!2$};
\node at (-0.7,1) {1};
\node at (2.7,1) {1};
\node at (1,0) {0};
\end{tikzpicture} & 
\begin{tikzpicture}[scale=0.4]\tiny
\node at (1,2) {0};
\node at (1,1) {$a\!^{\smallvee}\!\!-\!\!\,1$};
\node at (0,1) {0};
\node at (2,1) {0};
\node at (1,0) {1};
\end{tikzpicture} & $\cdots$ &
\begin{tikzpicture}[scale=0.4]\tiny
\node at (1,2) {0};
\node at (1,1) {$a\!^{\smallvee}$};
\node at (0,1) {0};
\node at (2,1) {0};
\node at (1,0) {0};
\end{tikzpicture} & $\cdots$ &
\begin{tikzpicture}[scale=0.4]\tiny
\node at (1,2) {1};
\node at (1,1) {$a\!^{\smallvee}\!\!-\!\!\,1$};
\node at (0,1) {0};
\node at (2,1) {0};
\node at (1,0) {0};
\end{tikzpicture} \\
\hline
$\dim(H_{s})=$ &\begin{tikzpicture}[scale=0.4]\tiny
\node at (1,0) {$g\!^{\smallvee}$};
\node at (1,1) {{$g\!^{\smallvee}$}};
\node at (1.5,0.5) {$g$};
\node at (0.5,0.5) {$g$};
\end{tikzpicture} &
\begin{tikzpicture}[scale=0.4]\tiny
\node at (1,0) {1};
\node at (1,1) {$a\!^{\smallvee}\!\!-\!\!\,1$};
\node at (1.5,0.5) {0};
\node at (0.5,0.5) {0};
\end{tikzpicture} & $\cdots$ &
\begin{tikzpicture}[scale=0.4]\tiny
\node at (1,0) {0};
\node at (1,1) {$a^{\smallvee}$};
\node at (1.5,0.5) {0};
\node at (0.5,0.5) {0};
\end{tikzpicture} & $\cdots$ &
\begin{tikzpicture}[scale=0.4]\tiny
\node at (1,0) {0};
\node at (1,1) {0};
\node at (1.5,0.5) {0};
\node at (0.5,0.5) {0};
\end{tikzpicture}\\
\hline
$\dim(H_{s^2})=$&\begin{tikzpicture}[scale=0.4]\tiny
\node at (1,0) {$g\!^{\smallvee}$};
\node at (1,1) {$g\!^{\smallvee}$};
\node at (1.5,0.5) {$g$};
\node at (0.5,0.5) {$g$};
\end{tikzpicture} &
\begin{tikzpicture}[scale=0.4]\tiny
\node at (1,0) {1};
\node at (1,1) {$a\!^{\smallvee}\!\!-\!1$};
\node at (1.5,0.5) {0};
\node at (0.5,0.5) {0};
\end{tikzpicture} & 
\begin{tikzpicture}[scale=0.4]\tiny
\node at (1,0) {$0$};
\node at (1,1) {$a\!^{\smallvee}$};
\node at (1.5,0.5) {$0$};
\node at (0.5,0.5) {$0$};
\end{tikzpicture} & $\cdots$ &
\begin{tikzpicture}[scale=0.4]\tiny
\node at (1,0) {$0$};
\node at (1,1) {$0$};
\node at (1.5,0.5) {0};
\node at (0.5,0.5) {0};
\end{tikzpicture} &
\begin{tikzpicture}[scale=0.4]\tiny
\node at (1,0) {$a\!^{\smallvee}\!\!-\!1$};
\node at (1,1) {$1$};
\node at (1.5,0.5) {0};
\node at (0.5,0.5) {0};
\end{tikzpicture}\\
\hline
\multicolumn{7}{|c|}{\large $\vdots$\hspace{5cm}$\vdots$}\\
\hline
$\dim(H_{s^{p-1}})=$&\begin{tikzpicture}[scale=0.4]\tiny
\node at (1,0) {$g\!^{\smallvee}$};
\node at (1,1) {$g\!^{\smallvee}$};
\node at (1.5,0.5) {$g$};
\node at (0.5,0.5) {$g$};
\end{tikzpicture} &
\begin{tikzpicture}[scale=0.4]\tiny
\node at (1,0) {0};
\node at (1,1) {$0$};
\node at (1.5,0.5) {0};
\node at (0.5,0.5) {0};
\end{tikzpicture} & 
\begin{tikzpicture}[scale=0.4]\tiny
\node at (1,0) {$a\!^{\smallvee}$};
\node at (1,1) {$0$};
\node at (1.5,0.5) {$0$};
\node at (0.5,0.5) {$0$};
\end{tikzpicture} & $\cdots$ &
\begin{tikzpicture}[scale=0.4]\tiny
\node at (1,0) {$a\!^{\smallvee}$};
\node at (1,1) {$0$};
\node at (1.5,0.5) {0};
\node at (0.5,0.5) {$0$};
\end{tikzpicture} &
\begin{tikzpicture}[scale=0.4]\tiny
\node at (1,0) {$a\!^{\smallvee}\!\!-\!1$};
\node at (1,1) {$1$};
\node at (1.5,0.5) {0};
\node at (0.5,0.5) {0};
\end{tikzpicture}\\
\hline
\end{tabular}
\end{center}
The following lemma follows immediately from considering this table. 
\begin{lem} Suppose $\Sigma_{W,H}$ is a K3 orbifold with $W=x_0^p +f(x_1,x_2,x_3)$. Then $p-1 | 24.$
\end{lem}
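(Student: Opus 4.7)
The strategy is to read off the total dimension of the untwisted Chen--Ruan cohomology $H^{*}_{\id}(\Sigma_{W,H};\CC)$ directly from the displayed table (for prime order $p$) and to compare it to the topological Euler characteristic of a K3 surface.

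First, I would invoke Proposition \ref{pro:crc} in the case $g=\id$ (i.e.\ Yasuda's crepant resolution theorem). Since $\Sigma_{W,H}$ is a Gorenstein K3 orbifold, its coarse space admits a crepant resolution $\wt X$, which is a (smooth) K3 surface on which the order-$p$ automorphism $s$ lifts. Consequently we obtain a bidegree-preserving isomorphism
\[H^{*}_{\id}(\Sigma_{W,H};\CC)\cong H^{*}(\wt X;\CC),\]
and in particular
\[\dim_{\CC}H^{*}_{\id}(\Sigma_{W,H};\CC)=\chi(\wt X)=24.\]

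Next, I would compute the left-hand side by summing the entries of the $\dim(H_{\id})$ row of the table, column by column. The first column (corresponding to $d_{j}=0$) assembles the contributions from the first ``diamond'' and totals
\[1+\bigl((p-1)a-2\bigr)+1+1+0=(p-1)a.\]
Each of the remaining $p-1$ columns (those with $d_{j}=j/p$, $1\le j\le p-1$) contributes exactly $a^{\smallvee}$: the first and last such columns contain one entry equal to $a^{\smallvee}-1$ together with a single $1$ in a corner, while the $p-3$ ``middle'' columns contain just the entry $a^{\smallvee}$. Thus
\[\dim_{\CC}H^{*}_{\id}(\Sigma_{W,H};\CC)=(p-1)a+(p-1)a^{\smallvee}=(p-1)(a+a^{\smallvee}).\]

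Equating the two expressions gives $(p-1)(a+a^{\smallvee})=24$. Since $a$ and $a^{\smallvee}$ are non-negative integers (dimensions of graded pieces of Jacobi rings) with $a+a^{\smallvee}\ge 1$, we conclude that $p-1$ divides $24$. There is no genuine obstacle here beyond the bookkeeping of column sums; the essential input is the identification of total Chen--Ruan dimension with $\chi(\mathrm{K3})=24$ via the crepant resolution theorem, and the rigidity of the prime-order table which forces the column totals to be uniform in $j\neq 0$.
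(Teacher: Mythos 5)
Your proposal is correct and is exactly the argument the paper intends: the paper's proof of Corollary \ref{cor:primeinv} uses the same identity $(p-1)a+(p-1)a^\vee=24$, obtained by summing the $\dim(H_{\id})$ row of the prime-order table and identifying the total with $\chi(\mathrm{K3})=24$ via the crepant resolution theorem. Your column-by-column bookkeeping matches the table, so nothing further is needed.
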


Let $\tilde{X}$ be a crepant resolution of $\mf X=\Sigma(W,G)$, and $\tilde{X}^\vee$ a crepant resolution of the mirror. The fixed locus of $s$ is a disjoint union of curves an isolated fixed points. As before, let $f_1$ be the number of isolated fixed points, $N_1$ the number of curves, and $g_1$ the sum of the genera of the curves. 
\begin{cor}\label{cor:primeinv} Suppose $p>2$. Then
$N_1=g_1^\vee+1$ and
$$f_1+f_1^\vee+4=\frac{(p-2)}{(p-1)} 24.$$
\end{cor}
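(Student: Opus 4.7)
The plan is to extract the three topological invariants $N_1$, $g_1$, and $f_1$ directly from the $H_{s^{p-1}}$ row of the generic table displayed just above the statement. Because $s$ acts on the K3 volume form by $\xi_p$, the inverse $s^{p-1}$ acts by $e^{2\pi\cxi(p-1)/p}$, so Example \ref{eg:dim2yasuda} supplies a constant age shift of $(p-1)/p$ along the entire $s^{p-1}$-fixed locus of $\tilde X$, which coincides with $\tilde X^s$ since $p$ is prime. I would use this to identify
\[
H^*_{s^{p-1}}(\mathfrak X;\CC) \cong H^*(\tilde X^s;\CC)(\tfrac1p - 1),
\]
so that each Hodge number of $\tilde X^s$ is a corner sum across the $H_{s^{p-1}}$ row of the generic table.

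First I would sum the ``top'' corner across all columns: only the broad column (contributing $g^\vee$) and the last narrow column (contributing $1$) are non-zero, and the total $g^\vee + 1$ equals $h^{1,1}(\tilde X^s) = N_1$. Since the variable $g^\vee$ is by construction the mirror genus-sum $g_1^\vee$, statement (1) follows at once. Next, I would total every entry of the $H_{s^{p-1}}$ row: the broad column contributes $2g + 2g^\vee$, the first narrow column contributes $0$, each of the $p-3$ middle narrow columns contributes $a^\vee$, and the last narrow column contributes $a^\vee$, yielding $2g + 2g^\vee + (p-2)\,a^\vee$. Equating this with $\dim H^*(\tilde X^s) = f_1 + 2 N_1 + 2 g_1$ and inserting $g_1 = g$ (the side-corner sum) together with $N_1 = g^\vee + 1$ gives $f_1 = (p-2)\,a^\vee - 2$. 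Running the identical computation on the mirror $\mathfrak X^\vee$ (swap $a \leftrightarrow a^\vee$ and $g \leftrightarrow g^\vee$) yields $f_1^\vee = (p-2)\,a - 2$, whence
\[
f_1 + f_1^\vee + 4 \;=\; (p-2)(a + a^\vee).
\]

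The final ingredient is the identity $(p-1)(a + a^\vee) = 24$, which I would obtain by summing every entry of the $H_{\id}$ row: the broad column contributes $(p-1)\,a$ and each of the $p-1$ narrow columns contributes $a^\vee$, for a total of $(p-1)(a + a^\vee)$; this equals $\dim H^*_{\CR}(\mathfrak X) = \dim H^*(\tilde X) = 24$, giving $a + a^\vee = 24/(p-1)$, and statement (2) then follows by substitution. The principal obstacle is precisely this last step: justifying that the schematic broad-column entry ``$a - 1 + b + a - 1$'' together with the two side $1$'s really collapses to $(p-1)\,a$ requires the complex-conjugation pairing of $s$-character multiplicities on primitive $H^{1,1}(\tilde X)$, which is the same symmetry underlying the preceding Lemma that $p - 1 \mid 24$.
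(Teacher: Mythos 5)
Your proposal is correct and follows essentially the same route as the paper: both read $N_1=g_1^\vee+1$, $g_1=g$ and $f_1=(p-2)a^\vee-2$ off the displayed table and combine them with $(p-1)(a+a^\vee)=24$. Your extra care in working with the $H_{s^{p-1}}$ row, where Example \ref{eg:dim2yasuda} gives a constant age shift so that corner sums really are Hodge numbers of $\tilde X^s$, is a sensible refinement; just note that the complex-conjugation pairing you invoke at the end only yields $\dim_{\chi_s=t}=\dim_{\chi_s=-t}$ and so does not by itself force all $p-1$ nonzero character multiplicities to equal the single value $a$ once $p\ge 5$ --- that equality is part of what the displayed table (via the elevator isomorphisms and the fact that all $s^b$, $b\neq 0$, have the same fixed locus for $p$ prime) is already taken to encode.
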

\begin{proof}
Using the table, it is easy to see
$$N_1=g^\vee+1, g_1=g.$$
Additionally,
$$f_1=(p-2)a^\vee-2.$$
Combining this with $(p-1)a+(p-1)a^\vee=24$, we obtain the statement in the theorem. 
\end{proof}

This corollary implies that Berglund--H\"ubsch mirror symmetry agrees with mirror symmetry for lattice polarised K3 surfaces.  We briefly recall the latter. 

Given a smooth K3 surface $\Sigma$, $\Lambda=H^2(\Sigma,\ZZ)$  is 
equipped with a lattice structure via the cup product taking values in $H^4(\Sigma;\ZZ)=\ZZ.$ Let $S_\Sigma:=\Lambda \cap H^{1,1}(\Sigma; \CC)$ be the \emph{Picard lattice} of $\Sigma$. 

Let $M$ be a hyperbolic lattice with signature $(1,t-1)$. A K3 surface $\Sigma$ is called \emph{$M$-polarized} if there exists a primitive embedding $M \hookrightarrow S_\Sigma$. Given a non-symplectic automorphism $s$ of $\Sigma$, the invariant sublattice $S(s):=\Lambda^s$ is in fact a primitive sublattice of the Picard lattice.  

\begin{defn} Given $M$ a primitive hyperbolic sublattice of $\Lambda=H^2(\Sigma,\ZZ)$ of rank at most 19 such that 
$$M^\perp=U \oplus M^\vee,$$
$M^\vee$ is defined to be the \emph{mirror} lattice to $M$. 
\end{defn}

Recall that we have restricted to the case where $s$ has prime order $p>2$ (we have discussed $p=2$ in \cite{CKV}). We now show that if two K3 surfaces with prime order automorphisms arise as crepant resolutions of a mirror pair of Berglund-H\"ubsch orbifolds, they have mirror lattices. In this case, $M:=S(s)$ is $p$-elementary. That is, $M^*/M =(\ZZ/p \ZZ)^{\oplus a}$, and it is completely classified by its rank $r$ and $a$. Then, by \cite{AST}, the fixed locus of $s$ is either just isolated points or a disjoint union of $N$ curves, of which $N-1$ are rational and the remaining one has genus $g$, and  $f$ isolated points. Set $m=\frac{22-r}{p-1}$. Moreover, \cite{AST} states (in a slightly different form) that for $p=3,5,7,13$, if the fixed locus contains a curve,
\begin{itemize}
\item $m=2g+a, -g+N=\frac{r-11+p}{p-1}.$
\end{itemize}
Notice that these are the only prime orders we need to consider, as we have shown that $p-1 | 24$. 

Lattice mirror symmetry exchanges $(r,a)$ with $(20-r,a)$.
\begin{thm}\label{thm:LPK3} Let $\Sigma_{W,H}$ and $\Sigma_{W^\vee,H^\vee}$ be mirror K3 orbifolds with prime order $p>2$ automorphisms $s, s^\vee$, and let $\Sigma$ and $\Sigma^\vee$ be crepant resolutions with automorphisms also denoted $s, s^\vee$. Then  $\Sigma$ and $\Sigma^\vee$ are mirror as lattice polarized K3 surfaces. 
\end{thm}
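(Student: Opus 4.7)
The aim is to reduce the statement to the computation already performed in Corollary \ref{cor:primeinv} together with Nikulin's/Artebani--Sarti--Taki's classification of $p$-elementary lattices that arise as invariants of order-$p$ non-symplectic automorphisms. Since $S(s) \subseteq H^2(\Sigma,\ZZ)$ is $p$-elementary, it is determined up to isomorphism by the pair $(r,a)$ of invariants; lattice mirror symmetry takes $(r,a)$ to $(20-r,a)$, so it suffices to prove $r^\vee = 20-r$ and $a^\vee = a$.

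The first step is to use the AST equalities recalled just before the statement:
\[ m = \tfrac{22-r}{p-1}, \qquad m = 2g + a, \qquad N - g = \tfrac{r-11+p}{p-1}, \]
which hold for $p \in \{3,5,7,13\}$ whenever the fixed locus of $s$ contains a curve (the case of only isolated points will be treated as a degenerate specialisation of the same formulas, after checking from the $H_s$ row of the state-space table that the number of curves $N = g^\vee + 1$ is always at least one). Inverting these relations yields
\[ r = (p-1)(N-g) + 11 - p, \qquad a = \tfrac{11+p}{p-1} - N - g, \]
together with the analogous expressions for $(r^\vee,a^\vee)$ in terms of $(N^\vee,g^\vee)$.

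The second step is purely combinatorial. By Corollary \ref{cor:primeinv} we have $N = g^\vee + 1$ and, by symmetry, $N^\vee = g + 1$. Substituting these into the formula for $a^\vee$ gives
\[ a^\vee = \tfrac{11+p}{p-1} - (g+1) - g^\vee = \tfrac{11+p}{p-1} - (g^\vee + 1) - g = a, \]
and adding the two expressions for $r$ and $r^\vee$ yields
\[ r + r^\vee = (p-1)\bigl[(N+N^\vee) - (g+g^\vee)\bigr] + 22 - 2p = (p-1)\cdot 2 + 22 - 2p = 20, \]
so $r^\vee = 20 - r$, which is precisely the lattice mirror relation.

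The main obstacle, in my view, is not the arithmetic above (which is transparent once Corollary \ref{cor:primeinv} is in hand) but rather the conceptual bookkeeping: one must be confident that the lattice-theoretic invariants $(r,a)$ read off from AST really are computed from the same $s$-fixed-locus invariants that appear in the Landau--Ginzburg table, and that the crepant resolution $\Sigma$ indeed carries the lifted automorphism and is an $M$-polarised K3 in the sense required by the Dolgachev/Nikulin mirror construction. This is essentially the content of Proposition \ref{pro:crc} together with the prime-order case of Example \ref{eg:dim2yasuda}, which ensures that the $s$-orbifold cohomology of $\Sigma_{W,H}$ truly computes the topology of the fixed locus upstairs on the K3. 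Once these identifications are made, the theorem follows from the two lines of algebra above.
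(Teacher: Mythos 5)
Your proposal is correct and follows essentially the same route as the paper: both arguments feed the relation $N=g^\vee+1$ from Corollary \ref{cor:primeinv} (together with its mirror counterpart $N^\vee=g+1$ and the observation that a fixed curve always exists in this setting) into the Artebani--Sarti--Taki formulas to deduce $r^\vee=20-r$ and $a^\vee=a$. Your symmetric computation of $r+r^\vee=20$ is a mildly cleaner arrangement of the same algebra, not a different method.
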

\begin{proof}
Corollary \ref{cor:primeinv} relates the invariants $(g,N,f)$ and $(g^\vee,N^\vee,f^\vee)$. It is enough to show that these relations give the mirror relations on $(r,a)$, namely that 
$$(r^\vee,a^\vee)=(20-r,a).$$ Notice that there is always a fixed curve when the K3 is a hypersurface in weighted projective space of this form. Therefore, we see that
\begin{equation*}
\begin{split}
r^\vee=(-g^\vee+N^\vee)(p-1)+(11-p)=(-N+g+2)(p-1)+(11-p)\\
=(p-1)(2-\frac{r-11+p}{p-1})+11-p=20-r.
\end{split}
\end{equation*}
Finally, this implies
$$a^\vee=2g^\vee-\frac{22-r^\vee}{p-1}=2(N-1)-\frac{2+r}{p-1}.$$
Using that $N=\frac{r-11+p}{p-1}+g,$ we obtain that
$$a^\vee=2g-\frac{22-r}{p-1}=a.$$
\end{proof}

\bibliographystyle{amsplain}
\bibliography{references}

\vspace{+16 pt}
\noindent A. Chiodo\\
\noindent Institut de Math\'ematiques de Jussieu -- Paris Rive Gauche \\
\noindent Sorbonne Universit\'e, UMR 7586 CNRS,\\
\noindent alessandro.chiodo@sorbonne-universite.fr 

\vspace{+16 pt}
\noindent E.~Kalashnikov \\
\noindent Department of Mathematics, FAS, Harvard University \\
\noindent One Oxford Street, Cambridge, MA 02138\\
\noindent kalashnikov@math.harvard.edu

\end{document}